\newcommand{\al}{\alpha}
\newcommand{\be}{\beta}
\newcommand{\ga}{\gamma}
\newcommand{\de}{\delta}
\newcommand{\eps}{\varepsilon}
\newcommand{\bx}{\bar x}
\newcommand{\iv}{^{-1} }
\newcommand {\R} {\mathbb R}
\newcommand {\B} {\mathbb B}
\newcommand {\dom} {{\rm dom}\,}
\newcommand {\bd} {{\rm bd}\,}
\newcommand {\cone} {{\rm cone}\,}
\newcommand {\Limsup} {\mathop{{\rm Lim\,sup}\,}}
\newcommand {\sd} {\partial}
\newcommand {\Int} {{\rm int}\,}
\def\RHS{right-hand side}
\def\SVM{set-valued mapping}
\newcommand{\norm}[1]{\left\Vert#1\right\Vert}
\newcommand{\set}[1]{\left\{#1\right\}}
\newcommand{\ang}[1]{\left\langle #1 \right\rangle}
\newcommand{\qdtx}[1]{\quad\mbox{#1}\quad}
\newcounter{mycount}
\newcommand{\tr}{{\rm tr}[A,B](\bx)}
\newcommand{\str}{{\rm str}[A,B](\bx)}
\newcommand{\itr}{{\rm itr}[A,B](\bx)}
\newcommand\xqed{%
  \leavevmode\unskip\penalty9999 \hbox{}\nobreak\hfill
  \quad\hbox{$\triangle$}}
\newcommand{\strc}{{\rm str}_c[A,B](\bx)}
\newcommand{\itrd}[1]{{\rm itr}_{#1}[A,B](\bx)}
\newcommand{\itrdd}[1]{{\rm itr}_{#1}'[A,B](\bx)}
\newcommand{\itrdh}[1]{\widehat{\rm itr}_{#1}[A,B](\bx)}
\renewcommand{\labelenumi}{\rm (\arabic{enumi})}
\renewcommand\labelenumi{\rm(\theenumi)}
\title{About intrinsic transversality of pairs of sets
\thanks{The research was supported by Australian Research Council, project DP160100854.
}}
\author{
Alexander Y. Kruger
}
\institute{Alexander Y. Kruger \at
Centre for Informatics and Applied Optimization, Federation University Australia, POB 663, Ballarat, VIC 3350, Australia \\
\email{a.kruger@federation.edu.au}
}
\date{Received: date / Accepted: date}
\begin{document}

\maketitle

\begin{abstract}
The article continues the study of the `regular' arrangement of a collection of sets near a point in their intersection.
Such regular intersection or, in other words, \emph{transversality} properties are crucial for the validity of qualification conditions in optimization as well as subdifferential, normal cone and coderivative calculus, and convergence analysis of computational algorithms.
One of the main motivations for the development of the transversality theory of collections of sets comes from the convergence analysis of alternating projections for solving feasibility problems.
This article targets infinite dimensional extensions of the \emph{intrinsic transversality} property introduced recently by Drusvyatskiy, Ioffe and Lewis as a sufficient condition for local linear convergence of alternating projections.
Several characterizations of this property are established involving new limiting objects defined for pairs of sets.
Special attention is given to the convex case.

\keywords{Metric regularity \and Metric subregularity \and Transversality \and Subtransversality \and
Intrinsic transversality \and Normal cone \and Alternating projections \and Linear convergence}

\subclass{Primary 49J53 \and 65K10 \and Secondary 49K40 \and 49M05 \and 49M37 \and 65K05 \and 90C30}
\end{abstract}
\section{Introduction}

This article continues the study of the `regular' arrangement of a collection of sets near a point in their intersection.
Such \emph{regular intersection} or, in other words, \emph{transversality} properties are crucial for the validity of \emph{qualification conditions} in optimization as well as subdifferential, normal cone and coderivative calculus, and convergence analysis of computational algorithms.
This explains the growing interest of researchers to investigating this class of properties and obtaining primal and dual necessary and/or sufficient conditions in various settings (convex or nonconvex, finite or infinite dimensional, finite or infinite collections); cf. Bauschke and Borwein \cite{BauBor93,BauBor96}, Ngai and Th\'era \cite{NgaThe01}, Ng and Yang \cite{NgYan04}, Bakan et al. \cite{BakDeuLi05}, Kruger et al. \cite{Kru05,Kru06,Kru09,KruLop12.1, KruTha13,KruTha14,KruTha15,KruTha16, KruLukTha,KruLukTha2}, Chong Li et al. \cite{LiNg05,LiNgPon07,LiNg14}, Ng and Zhang \cite{NgZan07}, Lewis et al. \cite{LewMal08,LewLukMal09}, Zheng et al. \cite{ZheNg08,ZheWeiYao10}, Bauschke et all \cite{BauLukPhaWan13.1,BauLukPhaWan13.2}, Hesse and Luke \cite{HesLuk13}, Drusvyatskiy et al. \cite{DruIofLew15},  Noll and Rondepierre \cite{NolRon16}.
Note also the very well known connections (in fact, equivalences) between transversality properties of collections of sets and the corresponding regularity properties of \SVM s.
For example, the properties of transversality and subtransversality of pairs of sets correspond in a sense to metric regularity and metric subregularity of set-valued mappings, respectively (cf. \cite{Iof00_,Iof16, Kru05,Kru06,Kru09,KruTha15,KruLukTha,KruLukTha2}).

Due to the wide variety of applications coming from different areas, some transversality properties together with the corresponding necessary and/or sufficient conditions have been rediscovered many times in different contexts and often under different names.
The \emph{intrinsic transversality} property studied in the current article was originally introduced in 2015 by Drusvyatskiy et al. \cite{DruIofLew15} as
an important
sufficient condition for local linear convergence of alternating projections for solving finite dimensional nonconvex feasibility problems.
The new term has not been immediately accepted: in \cite{KruTha16} the property is referred to as \emph{DIL-restricted regularity} by the first letters of the names of the three authors.
Another (unnamed) transversality property has appeared in \cite[Theorem~4(ii)]{KruLukTha}, also in the finite dimensional setting, and has been used alongside intrinsic transversality (see \cite[Theorem~4(iii)]{KruLukTha}) as a dual space sufficient condition for a much better known property called \emph{subtransversality}.
A more general and refined infinite dimensional version of the property from \cite[Theorem~4(ii)]{KruLukTha} has been formulated in \cite{KruLukTha2} and proved to imply subtransversality in Asplund spaces.
Its thorough analysis is continued in the current article with several new limiting and other characterizations produced, and special attention given to the convex case.
It has come as a surprise that, when reduced to finite dimensional Euclidean spaces, this property is equivalent (see Theorem~\ref{T6} below) to intrinsic transversality as defined by Drusvyatskiy et al. \cite{DruIofLew15}.
Although the definition is different from the one in \cite{DruIofLew15}, here and in \cite{KruLukTha2} the name `intrinsic transversality' is adopted for this property in both finite and infinite dimensions.

The origins of the concept of regular arrangement of sets in space can be traced back to that of \emph{transversality} in differential geometry (see, for instance, \cite{GuiPol74,Hir76}).
Given smooth manifolds $A$ and $B$ in a finite dimensional normed linear space with a point $\bx\in A\cap B$, their transversality
can be characterized in dual terms:
\begin{align}\label{1}
N_{A}(\bar{x}) &\cap N_{B}(\bar{x})= \{0\},
\end{align}
where $N_{A}(\bar{x})$ and $N_{B}(\bar{x})$ are the \emph{normal spaces} (i.e., orthogonal complements to the tangent spaces) to $A$ and $B$, respectively, at the point $\bx$.

Since the pioneering work by Bauschke and Borwein \cite{BauBor93} in 1993, a strong motivation for the development of the transversality theory of collections of sets has been coming from the convergence analysis of alternating (or cyclic) projections for solving feasibility problems.
Given two sets $A$ and $B$, the \emph{feasibility problem} consists in finding a point in their intersection $A\cap B$.
This is a very general model which includes, in particular, solving systems of all sorts of equations and inequalities (algebraic, differential, etc.).

Assuming for simplicity that $A$ and $B$ are closed sets in finite dimensions, \emph{alternating projections} are determined by a sequence $(x_k)$ alternating between the sets:
\begin{equation*}
x_{2k+1}\in P_B(x_{2k}),\quad x_{2k+2}\in P_A(x_{2k+1})\quad(k=0,1,\ldots),
\end{equation*}
with some initial point $x_0$; see Fig.~\ref{F1}.
\begin{figure}[htbp]
\centering
\subfigure[Linear convergence]{
\includegraphics[height=5cm]{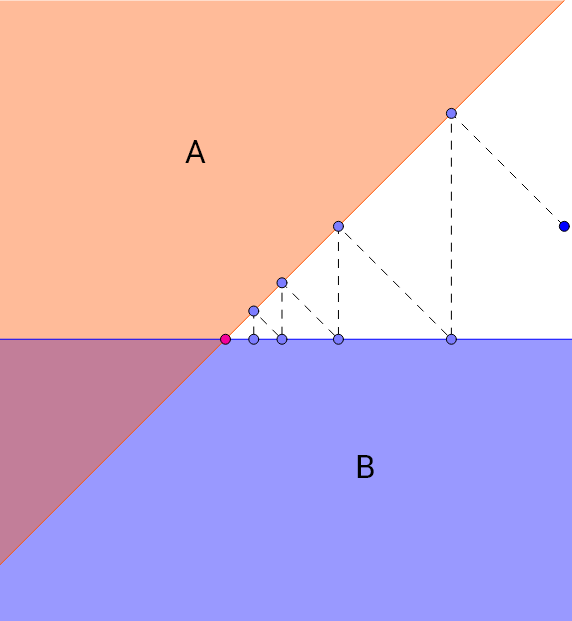}}
\qquad
\subfigure[No linear convergence]{
\includegraphics[height=5cm]{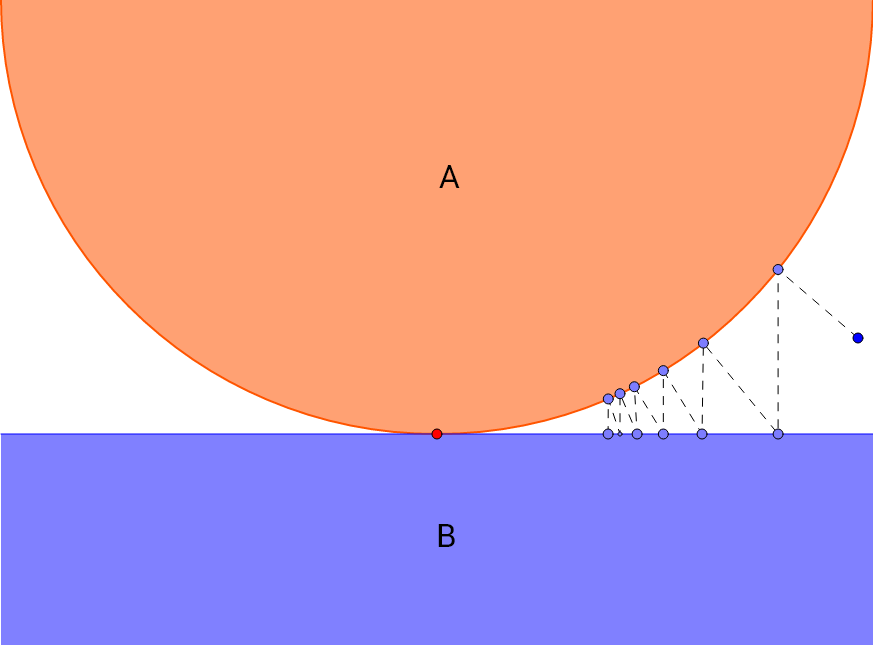}}
\caption{Alternating projections}\label{F1}
\end{figure}
Here $P_A$ and $P_B$ stand for the projection operators (see \eqref{Pr}) on the respective sets, corresponding to the Euclidean norm.
Equivalently, one can talk about a sequence $(x_k)$ defined using the composition of projection operators:
\begin{equation*}
x_{k+1}\in P_AP_B(x_{k})\quad(k=0,1,\ldots).
\end{equation*}
This simple algorithm has a long history.
It is often referred to as \emph{von Neumann method}, although some traces of this method can be found in the 19th century's publications (see the comments in \cite{NolRon16}).

Up until very recently, the method of alternating projections has been mostly studied in the convex setting.
If the sets $A$ and $B$ are convex, the projections are unique, and if $A\cap B\ne\emptyset$, the sequence always converges to a point in $A\cap B$; see Bregman \cite{Bre65} and Gurin et al \cite{GurPolRai67},  Bauschke and Borwein \cite{BauBor93}.
However, as one can see from comparing the two illustration in Fig.~\ref{F1}, the type of convergence can be strongly different.
Fig.~\ref{F1}(a) represents the case of \emph{linear convergence} characterized by the inequalities
\begin{gather*}
\norm{x_k-\hat x}\le\al c^k\quad(k=1,2,\ldots),
\end{gather*}
where $\hat x\in A\cap B$ is the limit of the sequence, $\al>0$ and $c\in]0,1[$ is the \emph{rate of convergence}.
In the case represented in Fig.~\ref{F1}(b), the above linear estimates do not hold, and the convergence obviously slows down.
It is easy to realize that the type of convergence and its rate are determined by the way the sets intersect.
For the linear convergence of alternating projections, the sets must intersect in a certain regular way.

A systematic analysis of the convergence of alternating projections in the convex setting was done by Bauschke and Borwein \cite{BauBor93,BauBor96}.
In particular, they demonstrated (see \cite[Corollary~3.14]{BauBor93}) that alternating projections converge linearly with rate $\sqrt{1-\al^2}$, provided that the pair $\{A,B\}$ of sets with $A\cap B\ne\emptyset$ is \emph{linearly regular} with rate $\al\in]0,1[$:
\begin{gather}\label{LR}
\alpha d\left(x,A\cap B\right)\le \max\left\{d(x,A),d(x,B)\right\}
\quad\mbox{for all}\quad x.
\end{gather}
Clearly, this is the case in the example in Fig.~\ref{F1}(a), while the pair of convex sets in Fig.~\ref{F1}(b) is not linearly regular.
It has been shown very recently by Luke et al. \cite{LukThaTeb} that linear regularity of the pair of convex sets with nonempty intersection is not only sufficient for the linear convergence of alternating projections, but is
also necessary.
This last result together with the theory developed by Bauschke and Borwein in the 1990s make the picture in the convex setting complete and positions the linear regularity property \eqref{LR} as the core regularity property for a pair of convex sets with nonempty intersection.

The picture becomes much more complicated
if the convexity assumption is dropped.
First, one can obviously talk only about local convergence and local (near a point in the intersection) regularity/transversality properties.
The local version of the linear regularity property \eqref{LR} --- called in this article \emph{subtransversality} (see Definition~\ref{D1}(i)) --- remains a necessary condition for certain types of local linear convergence of alternating projections; cf. \cite{LukThaTeb}.
This property has been thoroughly studied in \cite{KruLukTha2}.
On the other hand, a simple example in Fig.~\ref{F2}(b) shows that it is not sufficient to guarantee (any) convergence of alternating projections.
\begin{figure}[htbp]
\centering
\subfigure[Convex case]{
\includegraphics[height=5cm]{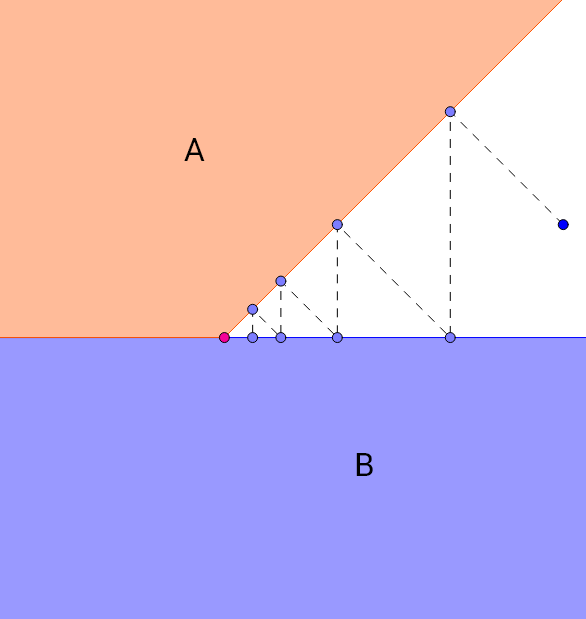}}
\qquad
\subfigure[Nonconvex case]{
\includegraphics[height=5cm]{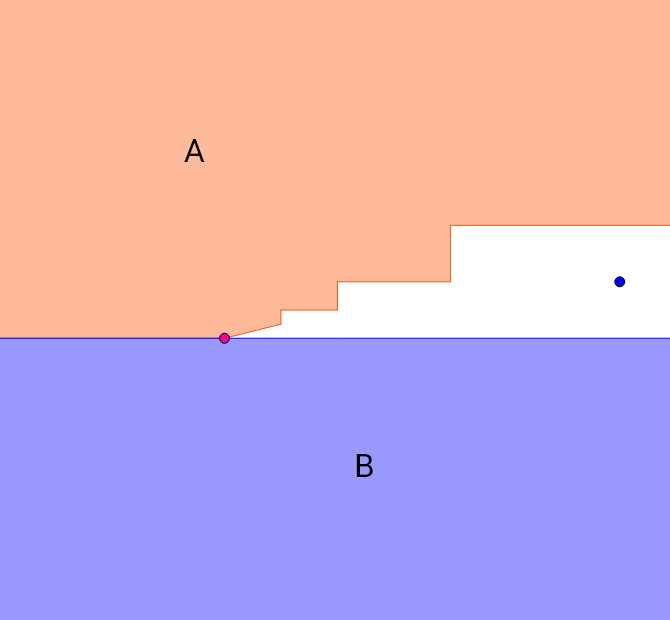}}
\caption{Subtransversality}\label{F2}
\end{figure}
The pair of sets in this example satisfies even the global linear regularity condition \eqref{LR}.
However, the set $A$ is not convex, and the sequence determined by the alternating projections becomes stationary and is not going to converge to any point in $A\cap B$.
At the same time, many important applications naturally lead to feasibility problems for collections of nonconvex sets, and alternating projections often demonstrate reasonably good convergence.

Lewis and Malick \cite{LewMal08} and Lewis et al. \cite{LewLukMal09} demonstrated recently in the Euclidean space setting that the uniform version of the subtransversality property --- called in this article \emph{transversality} (see Definition~\ref{D1}(ii)) --- guarantees local linear convergence of alternating projections for, respectively, a pair of smooth manifolds or a pair of arbitrary closed sets one of which is \emph{super-regular} at the reference point.
Next, Drusvyatskiy et al. \cite{DruIofLew15} showed that the super-regularity assumption can be dropped at the expense of reduced (but still linear) convergence rate.
The transversality property holds, for instance, in the example in Fig.~\ref{F1}(a).
Thanks to \cite{LewMal08,LewLukMal09,DruIofLew15}, the transversality has become a candidate for the position of the core regularity property for a pair of nonconvex sets with nonempty intersection from the point of view of local convergence of alternating projections.

However, the picture in the nonconvex setting is far from being complete.
The transversality is not necessary for the local convergence of alternating projections even in the convex case.
For example, it always fails when the affine span of the union of the sets is not equal to the whole space, while alternating projections can still converge linearly as is the case when the sets are convex with nonempty intersection of their relative interiors.
Another example is given in Fig.~\ref{F2}(a).
Comparing this example with the one in Fig.~\ref{F1}(a) illustrates the difference between the transversality and subtransversality properties.
In these two examples, this difference does not affect the convergence of alternating projections.
The role of the transversality property in the convergence analysis of alternating projections in the nonconvex setting has been further studied in Noll and Rondepierre \cite{NolRon16}, and Kruger et al. \cite{KruTha16,KruLukTha}.

A quest has started for the weakest regularity property lying between transversality and subtransversality and still being sufficient for the local linear convergence of alternating projections in the nonconvex setting.
We mention here the articles by Bauschke et al. \cite{BauLukPhaWan13.1,BauLukPhaWan13.2} utilizing \emph{restricted normal cones}, Drusvyatskiy et al. \cite{DruIofLew15} introducing and successfully employing \emph{intrinsic transversality}, Noll and Rondepierre \cite{NolRon16} introducing a concept of \emph{separable intersection}, with 0-separability being a weaker property than intrinsic transversality and still implying the local linear convergence of alternating projections under the additional assumption that one of the sets is 0-\emph{H\"older regular} at the reference point with respect to the other.
In particular, Drusvyatskiy et al. have shown in \cite[Proposition~3.2 and Theorems~6.1 and 6.2]{DruIofLew15} that, for a pair of closed sets with nonempty intersection, it holds
\begin{align*}
\text{transversality}
\quad\Longrightarrow\quad
\text{intrinsic transversality}
\quad\Longrightarrow\quad
\text{subtransversality},
\end{align*}
and intrinsic transversality ensures local linear convergence of alternating projections.
This makes intrinsic transversality the main candidate for the role of the core regularity property from the
point of view of local convergence of alternating projections.
It is also the main object of interest in the current article.

All the considerations above are for the case when the pair of sets under consideration has nonempty intersection.
At the same time, the alternating projections used in the current article for illustrating the transversality theory of collections of sets can be applied in situations when the intersection is empty, and one can still talk about their `convergence' to some collection of points solving an \emph{inconsistent feasibility} problem.
This motivates expanding the transversality theory to the case of inconsistent feasibility.
The first attempt of this kind has been made recently by Luke et al. \cite{LukThaTam}.

Intrinsic transversality is defined in \cite{DruIofLew15} in the Euclidean space setting using Fr\'echet normal cones.
Unlike intrinsic transversality, the conventional properties of transversality and subtransversality are defined in the setting of an arbitrary normed linear space in purely primal space terms (see Definition~\ref{D1}).
However, in applications it is often more convenient to work with dual space conditions in terms of normal cones.
In the case of transversality, its equivalent Fr\'echet normal cone characterizations in Asplund spaces (or even in general Banach spaces if the sets are convex) are well known.
These representations and not the original primal space definitions were mainly used in \cite{LewMal08,LewLukMal09} when establishing local linear convergence of alternating projections for pairs of nonconvex sets in a finite dimensional space.
For subtransversality, no normal cone conditions have been known up until recently.
The first condition of this type was announced without proof in the Euclidean space setting in \cite[Theorem~4(ii)]{KruLukTha}.
A more general and slightly improved Asplund space version of this result has been proved in \cite{KruLukTha2}.
Unlike the case of transversality, the mentioned normal cone conditions characterizing subtransversality are only sufficient.
The subtransversality property, as is well known, lacks stability.
This fact makes obtaining general necessary and sufficient normal cone characterizations of this property highly unlikely.
The two sets of sufficient normal cone conditions of the subtransversality property, established in \cite{KruLukTha2}, are themselves important transversality/regularity properties of pairs of sets in general normed linear spaces (see Definition~\ref{D3+} below) lying between transversality and subtransversality.
In a finite dimensional Euclidean space, the strongest of the two properties is equivalent (see Theorem~\ref{T6} below) to intrinsic transversality as defined by Drusvyatskiy et al. \cite{DruIofLew15}.
Borrowing partially the terminology from \cite{DruIofLew15}, the two properties are called here \emph{weak intrinsic transversality} and \emph{intrinsic transversality}.

In the current article which continues \cite{KruLukTha2}, the two versions of intrinsic transversality are studied in detail mainly in the finite dimensional setting.
First, spaces with arbitrary norms are considered, and then the results are further specified for Euclidian spaces.

In Section~\ref{s:subtrans} we recall the definitions of \emph{transversality} and \emph{subtransversality} properties of pairs of sets in general normal linear spaces and provide their dual space normal cone necessary and sufficient or just sufficient characterizations in Asplund spaces.
If the sets are convex, the characterizations are formulated in general Banach spaces.
A slightly simpler version of the Asplund space dual sufficient condition of subtransversality from \cite{KruLukTha2} is given.
Then the definitions of \emph{intrinsic transversality} and \emph{weak intrinsic transversality} from \cite{KruLukTha2} are reproduced and their relationships with the conventional \emph{subtransversality} property are formulated.

Sections~\ref{S3} and \ref{S4} are restricted to the finite dimensional situation.
In Section~\ref{S3} two new limiting objects are introduced for pairs of sets: the cone of \emph{pairs of relative limiting normals} and the cone of \emph{pairs of restricted relative limiting normals}.
They allow one to formulate simple limiting criteria of \emph{intrinsic transversality} and in the convex case also \emph{subtransversality}.
The criteria are further simplified if the Euclidian norm is used.
In particular, it is shown that, if the sets are convex, the properties of \emph{intrinsic transversality}, \emph{weak intrinsic transversality} and \emph{subtransversality} are equivalent.
In Section~\ref{S4} several more criteria of intrinsic transversality are presented.
In particular, it is shown that the normed linear space definition of intrinsic transversality adopted in this article, in the Euclidean space setting reduces to the original definition of this property due to Drusvyatskiy et al. \cite{DruIofLew15}.
This justifies the name `intrinsic transversality' used throughout this article.

The concluding Section~\ref{S5} contains a collection of questions related to the content of the article, to which the author does not know the answers.

\paragraph{Notation and preliminaries.}

Given a normed linear space $X$,
its topological dual is denoted by $X^*$, while $\langle\cdot,\cdot\rangle$ denotes the bilinear form defining the pairing between the spaces.
$\B$ and $\B^*$ stand for the closed unit balls in $X$ and $X^*$, respectively, while $\B_\de(x)$ denotes the open ball with centre at $x$ and radius $\de>0$.
Given a set $A$ in a normed linear space, its interior and boundary are denoted by $\Int A$ and $\bd A$, respectively, while $\cone A$ denotes the cone generated by $A$: $\cone A:=\{ta\mid a\in A, t\ge0\}$.
$d_A(x)$ stands for the distance from a point $x$ to a set $A$.
Given an $\al\in\R_\infty:=\R\cup\{+\infty\}$, $\al_+$ denotes its positive part: $\al_+:=\max\{\al,0\}$.
We regularly use the convention that the supremum of the empty subset of $\R_+$ equals~0.
Regarding the infimum of the empty subset of $\R_+$, we occasionally use different conventions which are always explicitly specified in the text: depending on the context, the infimum can be assumed equal either 1 or 2.

Dual characterizations of transversality and subtransversality properties involve dual space objects -- \emph{normal cones}.
For the detailed discussion of the objects introduced below, the readers are referred to the standard references \cite{RocWet98,Mor06.1,Kru03}.
The terminology and notation adopted here mostly follow that in\cite{Kru03}.

Given a subset $A$ of a normed linear space $X$ and a point $\bx\in A$, the \emph{Fr\'echet normal cone} to $A$ at $\bx$ is defined as follows:
\begin{gather}\label{NC1}
N_{A}(\bx):= \left\{x^*\in X^*\mid \limsup_{a\to\bx,\,a\in A\setminus\{\bx\}} \frac {\langle x^*,a-\bx \rangle}{\|a-\bx\|} \leq 0 \right\}.
\end{gather}
It is a nonempty norm closed convex cone, often trivial ($N_{A}(\bx)=\{0\}$).
Similarly,
given a function $f:X\to\R_\infty:=\R\cup\{+\infty\}$ and a point $\bx\in\dom f$, the \emph{Fr\'echet subdifferential} of $f$ at $\bx$ is defined as
\begin{gather}\label{sd}
\sd f(\bx):= \left\{x^*\in X^*\mid \liminf_{x\to\bx,\,x\ne \bx} \frac {f(x)-f(\bx)-\langle x^*,x-\bx \rangle}{\|x-\bx\|} \geq 0 \right\}.
\end{gather}
It is a norm closed convex set, often empty.

If $\dim X<\infty$, the \emph{limiting normal cone} to $A$ at $\bx$
can be useful:
\begin{gather}\label{NC3}
\overline{N}_{A}(\bar x):= \Limsup_{a\to\bx,\,a\in A}N_{A}(a):=\left\{x^*=\lim_{k\to\infty}x^*_k\mid x^*_k\in N_{A}(a_k),\;a_k\in A,\;a_k\to\bx\right\}.
\end{gather}
If $X$ is a Euclidian space and $A$ is closed, the Fr\'echet normal cones in definition \eqref{NC3} can be replaced by the \emph{proximal} ones:
\begin{gather}\label{NC2}
N_{A}^p(\bx):=\cone\left(P_A^{-1}(\bx)-\bx\right).
\end{gather}
Here $P_A$ is the \emph{projection} mapping:
\begin{gather}\label{Pr}
P_{A}(x):=\set{a\in A\mid\|x-a\|=d_A(x)},\quad x\in X.
\end{gather}
If $A$ is closed and convex, then $P_A$ is a singleton.
It is easy to verify that
$N_{A}^p(\bx)\subset N_{A}(\bx)$,
and $\overline{N}_{A}(\bar x)\ne\{0\}$ if and only if $\bx\in\bd A$.
Unlike \eqref{NC1} and \eqref{NC2}, the cone \eqref{NC3} can be nonconvex.

If $A$ is a convex set, then all three cones \eqref{NC1}, \eqref{NC3} and \eqref{NC2} coincide and reduce to the normal cone in the sense of convex analysis:
\begin{gather*}\label{CNC}
N_{A}(\bx):= \left\{x^*\in X^*\mid \langle x^*,a-\bx \rangle \leq 0 \qdtx{for all} a\in A\right\}.
\end{gather*}

Recall that a Banach space is \emph{Asplund} if every continuous convex function on an open convex set is Fr\'echet differentiable on some its dense subset \cite{Phe93}, or equivalently, if the dual of each its separable subspace is separable.
We refer the reader to \cite{Phe93,Mor06.1,BorZhu05} for discussions about and characterizations of Asplund spaces.
All reflexive, in particular, all finite dimensional Banach spaces are Asplund.

\section{Transversality, subtransversality and intrinsic transversality}\label{s:subtrans}

For brevity, in this article we consider the case of two nonempty sets $A$ and $B$.
The extension of the definitions and characterizations of the properties to the case of any finite collection of $n$ sets ($n>1$) is straightforward (cf. \cite{Kru05,Kru06,Kru09,KruTha13,KruTha15}).
The sets are assumed to have a common point $\bar{x}\in A\cap B$.
The notation $\{A,B\}$ is used when referring to the pair of two sets $A$ and $B$ as a single object.

\paragraph{Transversality and subtransversality.}
We first briefly recall two standard regularity properties of a pair of sets in a normed linear space, namely \emph{transversality} and \emph{subtransversality} (also known under other names).

\begin{definition}\label{D1}
Suppose $X$ is a normed linear space, $A,B\subset X$, and $\bx\in A\cap B$.
\begin{enumerate}
\item
$\{A,B\}$ is \emph{subtransversal} at $\bar x$ if
there exist numbers $\alpha\in]0,1[$ and $\delta>0$ such that
\begin{gather}\label{D1-1}
\alpha d\left(x,A\cap B\right)\le \max\left\{d(x,A),d(x,B)\right\}
\quad\mbox{for all}\quad x\in \B_{\delta}(\bar{x}).
\end{gather}
\item
$\{A,B\}$ is \emph{transversal} at $\bar x$ if
there exist numbers $\alpha\in]0,1[$ and $\delta>0$ such that
\begin{multline}\label{D1-2}
\alpha d\left(x,(A-x_1)\cap (B-x_2)\right)\le \max\left\{d(x,A-x_1),d(x,B-x_2)\right\}
\\\mbox{for all}\;\; x\in \B_{\delta}(\bar{x}),\;x_1,x_2\in \delta\B.
\hspace{-.2cm}
\end{multline}
\end{enumerate}
The exact upper bound of all $\alpha\in]0,1[$ such that condition \eqref{D1-1} or condition \eqref{D1-2} is satisfied for some $\de>0$ is denoted by $\str$ or $\tr$, respectively, with the convention that the supremum of the empty set equals~0.
\end{definition}

The requirement that $\al<1$ in both parts of Definition~\ref{D1} imposes no restrictions on the property.
It is only needed in the case $\bx\in\Int(A\cap B)$ (when conditions \eqref{D1-1} and \eqref{D1-2} are satisfied for some $\de>0$ with any $\al>0$) to ensure that $\str$ and $\tr$ are always less than or equal to 1 and simplify the subsequent quantitative estimates.
It is easy to check that when $\bx\in\bd(A\cap B)$, each of the conditions \eqref{D1-1} and \eqref{D1-2} implies $\al\le1$.
We are going to use similar requirements in other definitions throughout the article.

The subtransversality (transversality) of $\{A,B\}$ is equivalent to the condition $\str>0$ ($\tr>0$), and $\str$ ($\tr$) provides a quantitative characterization of this property.

The metric property in part (i) of Definition~\ref{D1} is a very well known regularity property that has been around for more than 30 years under various names ((local) \emph{linear regularity}, \emph{metric regularity}, \emph{linear coherence}, \emph{metric inequality}, and \emph{subtransversality}); cf. \cite{BakDeuLi05,
BauBor93,Dol82,
RocWet98,BauBor96, Iof89,Iof00_,Iof16,KlaLi99,HesLuk13,
LiNgPon07,NgaThe01,Pen13,ZheNg08,ZheWeiYao10, DruIofLew15}.
It has been used as the key assumption when establishing linear convergence of sequences generated by alternating projection algorithms and a qualification condition for subdifferential and normal cone calculus formulae.
If the sets are convex, it is equivalent to the linear regularity property \eqref{LR}.

The property in part (ii) of Definition~\ref{D1} was referred to in \cite{Kru05,Kru06,Kru09} as \emph{strong metric inequality}.
If $A$ and $B$ are closed convex sets and $\Int A\ne\emptyset$, it is equivalent to the conventional qualification condition: $\Int A\cap B\ne\emptyset$ (cf. \cite[Proposition~14]{Kru05}).

There are other equivalent primal space definitions for each of the properties in Definition~\ref{D1}; cf. \cite{Kru05,Kru06,Kru09,KruTha13,KruTha15}.

From comparing the properties in Definitions~\ref{D1}, one can see that the transversality of a pair of sets corresponds to the subtransversality of all their small translations holding uniformly (cf. \cite[p.~1638]{DruIofLew15}).
The next inequality is straightforward:
$$\tr\le\str.$$
We refer the reader to \cite{KruTha15,KruLukTha2} for more examples illustrating the relationship between the properties in Definition~\ref{D1}.

\begin{remark}
1. The maximum of the distances in Definition~\ref{D1} and some other representations in the sequel corresponds to the maximum norm in $\R^2$ employed in all these definitions and assertions.
It can be replaced everywhere by the sum norm (pretty common in this type of definitions in the literature) or any other equivalent norm.
All quantitative characterizations of the properties will remain valid (as long as the same norm is used everywhere), although the exact values of $\str$ and $\tr$ do depend on the chosen norm and some estimates can change.

2. In some situations it can be convenient to use the reciprocals $(\str)\iv$ and $(\tr)\iv$ instead of $\str$ and $\tr$, respectively, when characterizing the corresponding properties.
Instead of checking whether the constant is nonzero when verifying the property, one would need to check wether its reciprocal is finite.
\xqed\end{remark}

Transversality properties of pairs of sets 
are strongly connected with the corresponding regularity properties of \SVM s.
The properties in parts (i) and (ii) of Definition~\ref{D1} correspond, respectively, to \emph{metric subregularity} and \emph{metric regularity} of \SVM s (cf. \cite{Kru05,Kru06,Kru09,KruTha15,KruLukTha,KruLukTha2, Iof00_,Iof16}), which partially explains the terminology adopted in the current article.
These regularity properties of \SVM s lie at the core of the contemporary variational analysis.
They have their roots in classical analysis and are crucial for the study of stability of solutions to (generalized) equations and various aspects of subdifferential calculus and optimization theory.
For the state of the art of the regularity theory of \SVM s and its numerous applications we refer the reader to the book by Dontchev and Rockafellar \cite{DonRoc14} and the comprehensive survey by Ioffe \cite{Iof16,Iof16.2}.


\paragraph{Dual characterizations.}

The dual criterion for the transversality property in Definition~\ref{D1}(ii) in Asplund spaces is well known; see \cite{Kru05,Kru06,Kru09,KruTha13,KruTha15}.

\begin{theorem}\label{T0}
Suppose $X$ is Asplund, $A,B\subset X$ are closed, and $\bx\in A\cap B$. Then
$\{A,B\}$ is transversal at $\bar x$ if and only if there exist numbers $\alpha\in]0,1[$ and $\delta>0$ such that
$\|x^*_1+x^*_2\|>\alpha$ for all $a\in A\cap\B_\delta(\bx)$, $b\in B\cap\B_\delta(\bx)$, and all $x_1^*\in N_{A}(a)$ and $x_2^*\in N_{B}(b)$ satisfying $\|x^*_1\|+\|x^*_2\|=1$.
Moreover, the exact upper bound of all such $\al$ equals $\tr$.
\end{theorem}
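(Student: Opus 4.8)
The plan is to establish the equivalence between the primal transversality property \eqref{D1-2} and the dual normal cone condition by passing through the well-developed machinery of metric regularity of set-valued mappings, together with the Asplund-space fuzzy calculus for Fr\'echet subdifferentials and normal cones. The standard device here is to encode the pair $\{A,B\}$ by the mapping $F:X\rightrightarrows X\times X$ defined by $F(x):=(A-x)\times(B-x)$, or equivalently to work with the function $x\mapsto\max\{d(x,A-x_1),d(x,B-x_2)\}$ and its subdifferential. Transversality of $\{A,B\}$ at $\bx$ is then precisely metric regularity of this construction around the relevant point, and $\tr$ is the corresponding modulus (its reciprocal being the regularity modulus). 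This reduction is essentially the content of the references \cite{Kru05,Kru06,Kru09}, so I would cite it and concentrate on the normal-cone bookkeeping.

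First I would prove the ``only if'' direction. Assuming \eqref{D1-2} holds with constants $\al\in\,]0,1[$ and $\de>0$, I want to show $\|x_1^*+x_2^*\|>\al$ (after possibly shrinking $\de$ and the constant slightly) for all $a\in A\cap\B_\de(\bx)$, $b\in B\cap\B_\de(\bx)$ and all $x_1^*\in N_A(a)$, $x_2^*\in N_B(b)$ with $\|x_1^*\|+\|x_2^*\|=1$. The idea is to argue by contradiction: if the pair of normals could be made ``almost opposite'' with $\|x_1^*+x_2^*\|\le\al$, then one can construct, using the variational description of Fr\'echet normals, perturbations $x_1,x_2$ and a point $x$ near $\bx$ at which the maximum of the two translated distances is strictly smaller than $\al\, d(x,(A-x_1)\cap(B-x_2))$, contradicting \eqref{D1-2}. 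Concretely, one uses that $x_1^*\in N_A(a)$ gives a direction along which moving away from $a$ inside $X$ increases $\langle x_1^*,\cdot\rangle$ while keeping $d(\cdot,A)$ small of higher order; combining the two near-opposite normals produces a common descent direction for both distance functions after the translation. This is the classical Ioffe-type ``separation of normals vs. metric estimate'' argument.

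For the ``if'' direction, I would assume the dual inequality and derive \eqref{D1-2}. This is where the Asplund structure is essential. The scheme: suppose transversality fails; then for every $\al\in\,]0,1[$ and $\de>0$ there are $x\in\B_\de(\bx)$ and $x_1,x_2\in\de\B$ with $\max\{d(x,A-x_1),d(x,B-x_2)\}<\al\, d(x,(A-x_1)\cap(B-x_2))$. Applying the Ekeland variational principle to the function $z\mapsto\max\{d(z,A-x_1),d(z,B-x_2)\}$ produces a nearby approximate minimizer of a slightly perturbed function; the Fr\'echet subdifferential of a $\max$ of two functions, together with the fuzzy sum rule in Asplund spaces and the representation of the subdifferential of a distance function in terms of Fr\'echet normals at nearby points of the set, then yields normals $x_1^*\in N_A(a)$, $x_2^*\in N_B(b)$ at points $a,b$ close to $\bx$, of the form $x_1^*=\la y^*$, $x_2^*=(1-\la)y^*$ up to small errors, hence with $\|x_1^*+x_2^*\|$ arbitrarily small while $\|x_1^*\|+\|x_2^*\|$ is bounded below — contradicting the hypothesis. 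Letting the errors vanish pins down the exact value $\tr$ as the supremum of admissible $\al$, matching the two one-sided estimates from both directions.

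The main obstacle I anticipate is the careful handling of the translations $x_1,x_2$ and of the degenerate case $\bx\in\Int(A\cap B)$, plus making the quantitative matching sharp enough to conclude that the dual bound is exactly $\tr$ rather than merely equivalent up to a constant. Getting the fuzzy-calculus error terms to be controlled uniformly — so that the infimum over admissible $\al$ on the primal side agrees with the supremum on the dual side — requires threading the Ekeland radius, the sum-rule neighbourhood, and the distance-function subdifferential estimate together with matching constants; this is routine in principle but is the step where most of the technical work lies. The convex case is easier: there the Fr\'echet normal cone coincides with the normal cone of convex analysis, the fuzzy sum rule becomes the exact convex sum rule, and no Asplund assumption is needed, which is why the statement can be (and, per the surrounding discussion, is) strengthened to general Banach spaces in that setting.
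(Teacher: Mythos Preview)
The paper does not prove Theorem~\ref{T0}: it is stated as a known result with the sentence ``The dual criterion for the transversality property in Definition~\ref{D1}(ii) in Asplund spaces is well known; see \cite{Kru05,Kru06,Kru09,KruTha13,KruTha15}'', and no argument is given. So there is no ``paper's own proof'' to compare your proposal against.

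That said, your sketch follows precisely the route taken in those cited references: the reduction of transversality to metric regularity of the mapping $x\mapsto(A-x)\times(B-x)$ (equivalently, an error-bound/slope estimate for $x\mapsto\max\{d(x,A-x_1),d(x,B-x_2)\}$), Ekeland's variational principle, the Asplund fuzzy sum rule, and the representation of $\partial d(\cdot,C)$ via Fr\'echet normals at nearby points of $C$. The identification of the exact constant $\tr$ with the supremum of admissible $\alpha$ is also obtained in those papers by letting the Ekeland and sum-rule error parameters go to zero, exactly as you describe. Your remarks on the convex case (exact sum rule, no Asplund assumption needed) are likewise consistent with what the paper says elsewhere. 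In short, your plan is the standard one and matches the proofs in the references the paper defers to; there is simply nothing in the present paper to benchmark it against.
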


In finite dimensions, the above criterion admits convenient equivalent reformulations in terms of limiting normals.

\begin{corollary}\label{C00}
Suppose $\dim X<\infty$, $A,B\subset X$ are closed, and $\bx\in A\cap B$.
The following conditions are equivalent:
\begin{enumerate}
\item
$\{A,B\}$ is transversal at $\bar x$;
\item
there exists a number $\alpha\in]0,1[$ such that ${\|x^*_1+x^*_2\|>\alpha}$
for all $x^*_1\in\overline{N}_{A}(\bar x)$ and $x^*_2\in\overline{N}_{B}(\bar x)$
satisfying
$\|x^*_1\|+\|x^*_2\|=1$;
\item
$\overline{N}_{A}(\bar x)\cap\left(-\overline{N}_{B}(\bar x)\right)=\{0\}$.
\end{enumerate}
Moreover, the exact upper bound of all $\alpha$ in {\rm (ii)} equals $\tr$.
\end{corollary}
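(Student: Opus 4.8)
The plan is to derive Corollary~\ref{C00} from Theorem~\ref{T0}, which already supplies the Asplund-space dual criterion, by passing to the limit. Since $\dim X<\infty$, the space is automatically Asplund, so Theorem~\ref{T0} applies verbatim, and the only work is to replace the Fr\'echet normals at nearby points $a\in A$, $b\in B$ by limiting normals at $\bx$ itself. I would organize the argument as the chain of implications $(i)\Rightarrow(ii)\Rightarrow(iii)\Rightarrow(i)$, together with the quantitative statement.

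For $(i)\Rightarrow(ii)$: assume $\{A,B\}$ is transversal at $\bx$, and take the $\al\in\,]0,1[$ and $\de>0$ furnished by Theorem~\ref{T0}, so that $\|x_1^*+x_2^*\|>\al$ whenever $x_1^*\in N_A(a)$, $x_2^*\in N_B(b)$ with $a\in A\cap\B_\de(\bx)$, $b\in B\cap\B_\de(\bx)$ and $\|x_1^*\|+\|x_2^*\|=1$. Now let $x_1^*\in\overline N_A(\bx)$ and $x_2^*\in\overline N_B(\bx)$ with $\|x_1^*\|+\|x_2^*\|=1$. By definition \eqref{NC3} there are sequences $a_k\to\bx$ in $A$, $b_k\to\bx$ in $B$, and $x_{1,k}^*\to x_1^*$, $x_{2,k}^*\to x_2^*$ with $x_{i,k}^*\in N_A(a_k)$, $N_B(b_k)$ respectively. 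For large $k$ the points $a_k,b_k$ lie in $\B_\de(\bx)$; after normalizing $(x_{1,k}^*,x_{2,k}^*)$ by the positive scalar $t_k:=\|x_{1,k}^*\|+\|x_{2,k}^*\|\to1$ (which is eventually positive since the limit sum of norms is $1$), the criterion gives $\|x_{1,k}^*+x_{2,k}^*\|>\al t_k$, and letting $k\to\infty$ yields $\|x_1^*+x_2^*\|\ge\al$. This gives $(ii)$ with the same $\al$ up to the harmless strict/non-strict issue — I would absorb that by noting that if the inequality $\|x_1^*+x_2^*\|\ge\al$ holds for all unit-sum pairs then, since $\overline N_A(\bx)\times\overline N_B(\bx)$ intersected with the unit-sum sphere is compact (finite dimension, closedness of limiting normal cones), the infimum is attained and is $\ge\al>0$, hence strictly exceeds any slightly smaller $\al'$.

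The implication $(ii)\Rightarrow(iii)$ is immediate: if $0\ne x^*\in\overline N_A(\bx)\cap(-\overline N_B(\bx))$, scale so that $\|x^*\|=1/2$; then with $x_1^*=x^*$, $x_2^*=-x^*$ we get $\|x_1^*\|+\|x_2^*\|=1$ but $\|x_1^*+x_2^*\|=0$, contradicting $(ii)$. For $(iii)\Rightarrow(i)$ I would argue by contraposition using a compactness/normalization argument: if $\{A,B\}$ is not transversal, then by Theorem~\ref{T0} for every $k$ there exist $a_k\in A\cap\B_{1/k}(\bx)$, $b_k\in B\cap\B_{1/k}(\bx)$ and $x_{1,k}^*\in N_A(a_k)$, $x_{2,k}^*\in N_B(b_k)$ with $\|x_{1,k}^*\|+\|x_{2,k}^*\|=1$ and $\|x_{1,k}^*+x_{2,k}^*\|\le1/k$. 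By compactness of the unit sphere in $X^*\times X^*$ (finite dimension), pass to a subsequence with $x_{1,k}^*\to x_1^*$, $x_{2,k}^*\to x_2^*$; then $x_1^*\in\overline N_A(\bx)$, $x_2^*\in\overline N_B(\bx)$, $\|x_1^*\|+\|x_2^*\|=1$, and $x_1^*+x_2^*=0$, so $x_1^*=-x_2^*\ne0$ lies in $\overline N_A(\bx)\cap(-\overline N_B(\bx))$, contradicting $(iii)$. Finally, the quantitative claim that the supremum of admissible $\al$ in $(ii)$ equals $\tr$: the $(i)\Rightarrow(ii)$ step shows every $\al$ valid in Theorem~\ref{T0} is valid in $(ii)$, so $\sup\ge\tr$; conversely, reversing the compactness argument shows that if $\al$ works in $(ii)$ then for every $\al'<\al$ there is $\de>0$ making the Theorem~\ref{T0} condition hold with $\al'$ (otherwise a sequence as above produces a unit-sum limiting pair with $\|x_1^*+x_2^*\|\le\al'<\al$), so $\tr\ge\al'$ for all $\al'<\al$, giving $\tr\ge\sup$.

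The main obstacle is purely bookkeeping rather than conceptual: making sure the normalization constants $t_k$ stay bounded away from $0$ so that dividing by them is legitimate (this uses $\|x_1^*\|+\|x_2^*\|=1$ in the limit, i.e.\ $t_k\to1$), and handling the discrepancy between the strict inequality in Theorem~\ref{T0}/$(ii)$ and the non-strict inequality that survives a limit — both resolved by the compactness of $\overline N_A(\bx)\cap\Sp^*$ type sets in finite dimensions and by the fact that all these conditions are "open in $\al$". No genuinely hard estimate is needed; everything reduces to Theorem~\ref{T0} plus the sequential definition \eqref{NC3} of the limiting normal cone and finite-dimensional compactness.
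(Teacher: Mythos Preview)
Your proposal is correct and is precisely the argument the paper has in mind: the paper gives no explicit proof of this corollary, treating it as an immediate consequence of Theorem~\ref{T0} via the sequential definition \eqref{NC3} of the limiting normal cone together with finite-dimensional compactness, which is exactly what you do. The bookkeeping points you flag (normalization constants $t_k\to1$, strict vs.\ non-strict inequalities absorbed by compactness of the unit-sum set in $\overline N_A(\bx)\times\overline N_B(\bx)$) are handled correctly.
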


The property in part (iii) of Corollary~\ref{C00} is a well known qualification condition/nonseparabilty property that has been around for about 30 years under various names (\emph{basic qualification condition}, \emph{normal qualification condition}, \emph{transversality}, \emph{transversal intersection}, \emph{regular intersection}, \emph{linearly regular intersection}, and \emph{alliedness property}); cf. \cite{Mor88,Mor06.1,ClaLedSteWol98,Pen13,LewMal08, LewLukMal09,Iof16}.
When $A$ and $B$ are smooth manifolds, it coincides with \eqref{1}.

The next two theorems established recently in \cite{KruLukTha2} deal with the subtransversality property in Definition~\ref{D1}(i).
They provide, respectively, a dual sufficient condition for this property in Asplund spaces and a necessary and sufficient dual criterion for convex sets in general Banach spaces.
Not surprisingly, the second statement is simpler.

\begin{theorem}\label{T1}
Suppose $X$ is Asplund, $A,B\subset X$ are closed, and $\bx\in A\cap B$. Then
$\{A,B\}$ is subtransversal at $\bar x$ if there exist numbers $\alpha\in]0,1[$ and $\delta>0$ such that,
for all $a\in(A\setminus B)\cap\B_\de(\bx)$, $b\in(B\setminus A)\cap\B_\de(\bx)$ and $x\in\B_\de(\bx)$ with $\norm{x-a}=\norm{x-b}$,
there exists an $\eps>0$ such that
$\|x^*_1+x^*_2\|>\alpha$ for all $a'\in A\cap\B_\eps(a)$, $b'\in B\cap\B_\eps(b)$, $x_1'\in\B_\eps(a)$, $x_2'\in\B_\eps(b)$, $x'\in\B_\eps(x)$ with $\norm{x'-x_1'}=\norm{x'-x_2'}$, and $x_1^*,x_2^*\in X^*$
satisfying
\begin{gather}\label{T1-2}
\|x^*_1\|+\|x^*_2\|=1,\quad
\ang{x^*_1,x'-x_1'}=\|x^*_1\|\|x'-x_1'\|,\quad
\ang{x^*_2,x'-x_2'}=\|x^*_2\|\|x'-x_2'\|,
\\\label{T1-3}
d(x_1^*,N_{A}(a'))<\delta,\quad d(x_2^*,N_{B}(b'))<\delta.
\end{gather}
Moreover, $\str\ge\al$.
\end{theorem}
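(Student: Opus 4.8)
The plan is to argue by contradiction, converting the failure of subtransversality into an error‑bound minimization that is almost solved, and then applying Ekeland's variational principle together with the fuzzy subdifferential calculus available in Asplund spaces. Since $\str$ is the supremum of all $\alpha'\in]0,1[$ admissible in \eqref{D1-1}, it suffices to show that for each $\alpha'\in]0,\alpha[$ inequality \eqref{D1-1} holds with this $\alpha'$ and some $\delta'>0$; so fix $\alpha'$ and suppose not. Then, with $\delta'$ as small as we wish, there is $x\in\B_{\delta'}(\bx)$ with $x\notin A\cap B$ and $f(x)<\alpha'\,d(x,A\cap B)$, where $f:=\max\{d_A,d_B\}$. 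As $f$ is nonnegative, $1$-Lipschitz, and vanishes at $\bx\in A\cap B$, Ekeland's principle with $\eps:=f(x)$ and $\lambda:=f(x)/\alpha'<d(x,A\cap B)$ yields $\hat x$ with $\norm{\hat x-x}\le\lambda$, $f(\hat x)\le f(x)$, and $f(y)+\alpha'\norm{y-\hat x}\ge f(\hat x)$ for all $y\in X$. Hence $\hat x\notin A\cap B$, $f(\hat x)>0$, and $\hat x$ lies in $\B_\delta(\bx)$ once $\delta'$ is small. An elementary argument, using that $d_A$ and $d_B$ decrease at unit rate along segments towards near nearest points and that $\alpha'<1$, forces $\hat x\notin A$, $\hat x\notin B$, and $d_A(\hat x)=d_B(\hat x)=:r>0$: if, say, $d_A(\hat x)>d_B(\hat x)$, then $f=d_A$ near $\hat x$ and a short step from $\hat x$ towards a near nearest point in $A$ would push $f+\alpha'\norm{\cdot-\hat x}$ strictly below $f(\hat x)$.

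The central step extracts dual certificates from the global minimality at $\hat x$ of $\Phi:=\max\{d_A,d_B\}+\alpha'\norm{\cdot-\hat x}$. From $0\in\sd\Phi(\hat x)$, applying for a small $\mu>0$ the Asplund‑space fuzzy sum rule (and the fuzzy rule for the maximum of two functions, legitimate because $d_A(\hat x)=d_B(\hat x)$), one produces points $a'\in A$, $b'\in B$ near $\bx$, a point $x'$ near $\hat x$, weights $\lambda_1,\lambda_2\ge0$ with $\lambda_1+\lambda_2=1$, and functionals $q_A^*\in N_A(a')$, $q_B^*\in N_B(b')$ of norm $1$ which are (approximately) aligned with the directions $x'-a'$ and $x'-b'$ — the characteristic property of Fr\'echet subgradients of a distance function off the set — such that $\norm{\lambda_1q_A^*+\lambda_2q_B^*}\le\alpha'+O(\mu)$. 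Equivalently, one may first majorize $d_A,d_B$ by the translated norms $\norm{\cdot-a}$, $\norm{\cdot-b}$ for near nearest points $a,b$ of $\hat x$, reducing $\Phi$ to a convex function with unchanged infimum $r$, apply exact convex subdifferential calculus, and then upgrade the resulting direction functionals to genuine Fr\'echet normals at nearby points by one further Ekeland‑plus‑fuzzy‑sum‑rule argument on $A$ and on $B$ separately.

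Setting $x_1^*:=\lambda_1q_A^*$, $x_2^*:=\lambda_2q_B^*$, $x_1':=a'$, $x_2':=b'$, one has $\norm{x_1^*}+\norm{x_2^*}=\lambda_1+\lambda_2=1$, the alignment equalities of \eqref{T1-2} (after passing to exact alignment by a small perturbation of $x',x_1',x_2'$ along norming directions), $d(x_1^*,N_A(a'))=0<\delta$ and $d(x_2^*,N_B(b'))=0<\delta$ as in \eqref{T1-3}, and $\norm{x_1^*+x_2^*}\le\alpha'+O(\mu)<\alpha$ for $\mu$ small. It then remains to choose a master configuration $a\in(A\setminus B)\cap\B_\delta(\bx)$, $b\in(B\setminus A)\cap\B_\delta(\bx)$, $x\in\B_\delta(\bx)$ with $\norm{x-a}=\norm{x-b}$ — built from the near nearest points of $\hat x$ and from $\hat x$ itself — to let $\eps>0$ be the number the hypothesis supplies for $(a,b,x)$, and to verify that the constructed $a',b',x_1',x_2',x'$ fall into the required $\eps$‑balls with $\norm{x'-x_1'}=\norm{x'-x_2'}$. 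The hypothesis then gives $\norm{x_1^*+x_2^*}>\alpha$, contradicting the previous estimate. Hence \eqref{D1-1} holds for every $\alpha'<\alpha$, so $\str\ge\alpha$ and $\{A,B\}$ is subtransversal at $\bx$.

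The main obstacle is exactly this last matching: organizing the cascade of radii in the correct order — first $\delta$ from the hypothesis, then the ball around $\bx$ in which the failing point lives, then the number $\eps$ the hypothesis returns for the master configuration, and only then the Ekeland and fuzzy‑calculus tolerances — while simultaneously guaranteeing that the configuration points genuinely belong to $A\setminus B$ and $B\setminus A$ (which uses that, at $\hat x$, $d(\hat x,A\setminus B)=d(\hat x,A)$, since otherwise $f$ would coincide with a single distance function near $\hat x$, contradicting the minimality), that the distance‑balance equalities $\norm{x-a}=\norm{x-b}$ and $\norm{x'-x_1'}=\norm{x'-x_2'}$ are met exactly, and that the direction functionals are $\delta$‑close to Fr\'echet normals at admissible nearby points. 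The elaborate quantifier structure of the theorem's hypothesis is designed precisely to absorb these difficulties, and making the construction fit it cleanly is where the real work lies; the analytic ingredients themselves — Ekeland's principle, the fuzzy sum and maximum rules in Asplund spaces, and the standard description of the Fr\'echet subdifferential of a distance function — are routine.
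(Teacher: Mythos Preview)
The paper does not contain a proof of Theorem~\ref{T1}; it quotes the result from \cite{KruLukTha2} and only remarks that the argument there ``follows the sequence proposed in \cite{Kru15} \ldots\ and consists of a series of propositions providing lower primal and dual estimates for the constant $\str$.'' So there is nothing in this paper to compare your proposal against directly. What that remark indicates is that the published proof is organised as a chain of intermediate quantitative estimates rather than a single contradiction argument, though the underlying analytic tools --- Ekeland's principle and the Asplund fuzzy sum/max rules --- are certainly the same as yours.

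Your plan is the natural direct route and has the right architecture. You also correctly locate the genuine obstacle: the $\forall\,\exists\,\forall$ structure of the hypothesis forces you to fix the master configuration $(a,b,x)$ and receive its $\eps$ \emph{before} running the fuzzy calculus, whose tolerance must then be pushed below that $\eps$; the bookkeeping is doable but not trivial, and this is exactly what the ``series of propositions'' alluded to is meant to absorb. One stated justification is inaccurate, though the conclusion you draw from it is right: the reason a near-nearest point $a$ of $\hat x$ in $A$ cannot lie in $B$ is not that ``$f$ would coincide with a single distance function near $\hat x$'' (it would not --- you have just argued $d_A(\hat x)=d_B(\hat x)$), but that if $a\in A\cap B$ then stepping from $\hat x$ towards $a$ decreases \emph{both} $d_A$ and $d_B$, hence $f$, at rate essentially $1>\alpha'$, contradicting the Ekeland minimality of $f+\alpha'\|\cdot-\hat x\|$ at $\hat x$. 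With that correction, your outline is sound; what remains is exactly the careful radius-ordering you yourself flag as the main work.
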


\begin{theorem}\label{T2}
Suppose $X$ is a Banach space, $A,B\subset X$ are closed and convex, and $\bx\in A\cap B$.
Then $\{A,B\}$ is subtransversal at $\bar x$ if and only if there exist numbers $\alpha\in]0,1[$ and $\delta>0$ such that $\|x^*_1+x^*_2\|>\alpha$
for all $a\in(A\setminus B)\cap\B_\de(\bx)$, $b\in(B\setminus A)\cap\B_\de(\bx)$, $x\in\B_\de(\bx)$ with $\norm{x-a}=\norm{x-b}$, and $x_1^*,x_2^*\in X^*$ satisfying
\begin{gather}\label{T2-1}
\|x^*_1\|+\|x^*_2\|=1,\quad
\ang{x^*_1,x-a}=\|x^*_1\|\|x-a\|,\quad
\ang{x^*_2,x-b}=\|x^*_2\|\|x-b\|,
\\\notag
d(x_1^*,N_{A}(a))<\delta,\quad d(x_2^*,N_{B}(b))<\delta.
\end{gather}
Moreover, the exact upper bound of all such $\al$ equals $\str$.
\end{theorem}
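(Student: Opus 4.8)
The plan is to prove the two implications separately, keeping track of the constants so that the claim $\sup\{\al\}=\str$ falls out. One works throughout with the convex Lipschitz function $f:=\max\{d_A,d_B\}$, for which $\{x\in X:f(x)=0\}=A\cap B$, $\inf_Xf=0$, and the infimum is attained exactly on $A\cap B$. Two elementary convex-analytic facts are used repeatedly: \textbf{(a)} if $u^*\in\partial d_A(x)$ and $x\notin A$, then $\norm{u^*}=1$ and $\ang{u^*,x-c}\ge d_A(x)$ for every $c\in A$, with equality whenever $c\in P_A(x)$, so that in that case $u^*\in N_A(c)$; \textbf{(b)} if $x\notin A\cup B$ and $d_A(x)=d_B(x)$, then $\partial f(x)=\conv\big(\partial d_A(x)\cup\partial d_B(x)\big)$.

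\emph{Necessity.} Assume $\{A,B\}$ is subtransversal at $\bx$, fix $\bar\al<\str$, choose $\al_1$ with $\bar\al<\al_1<\str$ (so that \eqref{D1-1} holds with $\al_1$ and some $\de_1>0$), and pick $\de\in\,]0,\min\{\de_1,1/2\}[$ with $2\de(\al_1+1)<\al_1-\bar\al$. Take any $a\in(A\setminus B)\cap\B_\de(\bx)$, $b\in(B\setminus A)\cap\B_\de(\bx)$, $x\in\B_\de(\bx)$ and $x_1^*,x_2^*\in X^*$ satisfying \eqref{T2-1}, set $(a_1,a_2):=(a,b)$ and $r:=\norm{x-a}=\norm{x-b}$ (then $r>0$, since $r=0$ forces $a=b$), and pick $y_1^*\in N_A(a_1)$, $y_2^*\in N_B(a_2)$ with $\norm{y_i^*-x_i^*}<\de$. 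For every $z\in A\cap B$ and $i\in\{1,2\}$ the alignment equalities in \eqref{T2-1} and $\ang{y_i^*,a_i-z}\ge0$ give
\begin{equation*}
\ang{x_i^*,x-z}=\norm{x_i^*}\,r+\ang{y_i^*,a_i-z}+\ang{x_i^*-y_i^*,a_i-z}\ \ge\ \norm{x_i^*}\,r-\de\,(r+\norm{x-z}).
\end{equation*}
Summing over $i$, using $\norm{x_1^*}+\norm{x_2^*}=1$ and $\ang{x_1^*+x_2^*,x-z}\le\norm{x_1^*+x_2^*}\,\norm{x-z}$, and taking the infimum over $z\in A\cap B$ yield
\begin{equation*}
\big(\norm{x_1^*+x_2^*}+2\de\big)\,d_{A\cap B}(x)\ \ge\ (1-2\de)\,r.
\end{equation*}
Since $\al_1\,d_{A\cap B}(x)\le\max\{d_A(x),d_B(x)\}\le r$ (the last step because $a\in A$, $b\in B$) and $r>0$, it follows that $d_{A\cap B}(x)>0$ and then $\norm{x_1^*+x_2^*}\ge\al_1(1-2\de)-2\de>\bar\al$. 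Thus the dual condition holds with $\al=\bar\al$ and this $\de$; as $\bar\al<\str$ was arbitrary, the supremum of the admissible $\al$ is at least $\str$.

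\emph{Sufficiency.} Suppose the dual condition holds with some $\bar\al\in\,]0,1[$ and $\de>0$, and, for contradiction, that $\str<\bar\al$ (this includes the non-subtransversal case). Then $\bar\al$ is not admissible in \eqref{D1-1}, so for each $n$ there is $x_n\in\B_{1/n}(\bx)$ with $r_n:=\max\{d_A(x_n),d_B(x_n)\}<\bar\al\,d_{A\cap B}(x_n)$; necessarily $x_n\notin A\cap B$, $d_{A\cap B}(x_n)>0$, $r_n>0$. Pick $\ga_n\in\big(r_n/(\bar\al\,d_{A\cap B}(x_n)),1\big)$ (nonempty) and apply Ekeland's variational principle to $f$ with $\eps_n:=r_n$, $\la_n:=\ga_n\,d_{A\cap B}(x_n)$: it produces $\hat x_n$ with $\norm{\hat x_n-x_n}\le\la_n<d_{A\cap B}(x_n)$ (hence $\hat x_n\notin A\cap B$, $\hat x_n\to\bx$) such that $f(\cdot)+(\eps_n/\la_n)\norm{\cdot-\hat x_n}$ is minimized at $\hat x_n$, whence, by the convex sum rule, $\xi_n\in\partial f(\hat x_n)$ with $\norm{\xi_n}\le\eps_n/\la_n<\bar\al$. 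If $d_A(\hat x_n)\ne d_B(\hat x_n)$, then $\partial f(\hat x_n)$ equals $\partial d_A(\hat x_n)$ or $\partial d_B(\hat x_n)$, whose elements all have norm $1$ by \textbf{(a)} ($\hat x_n$ lies outside the pertinent set), contradicting $\norm{\xi_n}<1$; so $d_A(\hat x_n)=d_B(\hat x_n)=:t_n$, with $t_n>0$ (else $\hat x_n\in A\cap B$), hence $\hat x_n\notin A\cup B$. By \textbf{(b)}, $\xi_n=x_{1n}^*+x_{2n}^*$ with $x_{1n}^*=s_nu_n^*$, $x_{2n}^*=(1-s_n)v_n^*$, $s_n\in[0,1]$, $u_n^*\in\partial d_A(\hat x_n)$, $v_n^*\in\partial d_B(\hat x_n)$, so $\norm{x_{1n}^*}+\norm{x_{2n}^*}=1$. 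Take $a_n\in P_A(\hat x_n)$, $b_n\in P_B(\hat x_n)$; by \textbf{(a)}, $\norm{\hat x_n-a_n}=t_n=\norm{\hat x_n-b_n}$, the alignment equalities in \eqref{T2-1} hold for $(\hat x_n,a_n,b_n,x_{1n}^*,x_{2n}^*)$, $x_{1n}^*\in N_A(a_n)$, $x_{2n}^*\in N_B(b_n)$, and $a_n,b_n\to\bx$. If $a_n\notin B$ and $b_n\notin A$, this quintuple is admissible in the dual condition, giving $\norm{\xi_n}>\bar\al$ --- impossible. If, say, $a_n\in B$ (so $a_n\in A\cap B$), then $\ang{v_n^*,\hat x_n-a_n}\ge d_B(\hat x_n)=t_n$ (subgradient inequality for $d_B$) and $\ang{u_n^*,\hat x_n-a_n}=t_n$ by \textbf{(a)}, so $\ang{\xi_n,\hat x_n-a_n}\ge t_n=\norm{\hat x_n-a_n}$, i.e.\ $\norm{\xi_n}\ge1$ --- again impossible. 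Hence $\str\ge\bar\al$; combined with the necessity part this gives that the exact upper bound of all admissible $\al$ equals $\str$, and in particular $\{A,B\}$ is subtransversal. (Equivalently, the sufficiency argument is the standard estimate of the local error-bound modulus of the convex function $f$ at $\bx$ by $\inf\{\norm{\xi}:\xi\in\partial f(x)\}$ over $x$ near $\bx$, together with fact \textbf{(b)}.)

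\emph{Main obstacle.} The one nontrivial point is the appeal to the metric projections $P_A(\hat x_n)$, $P_B(\hat x_n)$ in the sufficiency part: in a non-reflexive Banach space they can be empty. One must then work with approximate projections $\tilde a_n\in A$, $\norm{\hat x_n-\tilde a_n}<t_n+\eta_n$, which only satisfy $\sup_{c\in A}\ang{u_n^*,c-\tilde a_n}\le\eta_n$ and only approximate alignment, and argue that the slack $d(x_{1n}^*,N_A(\tilde a_n))<\de$ in \eqref{T2-1} absorbs the first defect while a further small perturbation of $\hat x_n$ restores the exact equalities $\norm{\cdot-\tilde a_n}=\norm{\cdot-\tilde b_n}$ and exact alignment, with all estimates uniform in $n$. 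Carrying out these perturbations jointly is the technical core; the difficulty is vacuous in finite dimensions, or in reflexive spaces under the given norm, where projections always exist.
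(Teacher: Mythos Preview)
The paper does not supply its own proof of this theorem; it is quoted from \cite{KruLukTha2}, and the only description given is that the argument there ``follows the sequence proposed in \cite{Kru15} when deducing metric subregularity criteria for set-valued mappings and consists of a series of propositions providing lower primal and dual estimates for the constant $\str$.'' Your direct attack via $f=\max\{d_A,d_B\}$, Ekeland's principle, and the convex max rule is therefore a genuinely different --- and more elementary --- route than the chain-of-estimates approach alluded to.

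Your necessity argument is clean and correct, and it delivers the sharp inequality $\sup\al\ge\str$.

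The sufficiency argument is correct in any reflexive Banach space, and you have honestly isolated the one obstruction in the general Banach case. However, the repair you sketch in the ``Main obstacle'' paragraph is not adequate as written. There are two separate defects when projections are replaced by $\tilde a_n\in A$ with $\norm{\hat x_n-\tilde a_n}<t_n+\eta_n$: (1) the normal-cone condition --- here you get only that $u_n^*$ is an $\eta_n$-normal to $A$ at $\tilde a_n$ in the sense $\sup_{c\in A}\ang{u_n^*,c-\tilde a_n}\le\eta_n$, which does \emph{not} in general imply $d(u_n^*,N_A(\tilde a_n))$ is small, so the slack $\de$ in \eqref{T2-1} does not obviously absorb it; and (2) the alignment equalities $\ang{x_i^*,x-a_i}=\norm{x_i^*}\norm{x-a_i}$ --- these are exact, and in a non-reflexive space the functional $u_n^*\in\sd d_A(\hat x_n)$ may simply fail to attain its norm, so no choice of $x$ and $a$ on the ray through $\hat x_n-\tilde a_n$ (or any small perturbation of $\hat x_n$) will ever satisfy the equality for this particular $u_n^*$. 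A genuine fix must perturb the \emph{functional} as well (e.g., via a Bishop--Phelps--Bollob\'as argument) while simultaneously keeping it $\de$-close to a normal cone at some nearby point of $A$, and do the same for $v_n^*$ and $B$, and then reconcile the two perturbed directions so that a common $x$ satisfies $\norm{x-a}=\norm{x-b}$. Carrying this out is substantially more delicate than ``perturbing $\hat x_n$'' and is presumably why the published proof proceeds through intermediate primal/dual moduli rather than this direct construction. (Your handling of the side case $a_n\in B$ or $b_n\in A$, by contrast, survives the passage to approximate projections with only a harmless loss.)
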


Below we reformulate Theorem~\ref{T1} in a slightly simpler way (one parameter less).

\begin{theorem}\label{T1'}
Suppose $X$ is Asplund, $A,B\subset X$ are closed, and $\bx\in A\cap B$. Then
$\{A,B\}$ is subtransversal at $\bar x$ if there exist numbers $\alpha\in]0,1[$ and $\delta>0$ such that,
for all $a\in(A\setminus B)\cap\B_\de(\bx)$, $b\in(B\setminus A)\cap\B_\de(\bx)$ and $x\in\B_\de(\bx)$ with $\norm{x-a}=\norm{x-b}$,
there exists an $\eps>0$ such that
$\|x^*_1+x^*_2\|>\alpha$ for all $a'\in A\cap\B_\eps(a)$, $b'\in B\cap\B_\eps(b)$,
$x_1'\in\B_\eps(a)$, $x_2'\in\B_\eps(b)$ with $\norm{x-x_1'}=\norm{x-x_2'}$, and $x_1^*,x_2^*\in X^*$ satisfying \eqref{T1-3} and
\begin{gather}\label{T1-1'}
\|x^*_1\|+\|x^*_2\|=1,\quad
\ang{x^*_1,x-x_1'}=\|x^*_1\|\|x-x_1'\|,\quad
\ang{x^*_2,x-x_2'}=\|x^*_2\|\|x-x_2'\|.
\end{gather}
Moreover, $\str\ge\al$.
\end{theorem}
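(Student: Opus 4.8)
The plan is to derive Theorem~\ref{T1'} from Theorem~\ref{T1}, which is already available. Theorem~\ref{T1} has an extra parameter, the point $x'$ ranging over $\B_\eps(x)$ with $\norm{x'-x_1'}=\norm{x'-x_2'}$, while the alignment conditions in \eqref{T1-2} are written with respect to $x'$. In Theorem~\ref{T1'} this moving point is simply replaced by the fixed point $x$, and the balancing condition becomes $\norm{x-x_1'}=\norm{x-x_2'}$ with alignment conditions in \eqref{T1-1'} taken with respect to $x$. So the core of the argument is a perturbation/continuity observation: the sufficient condition with the fixed point $x$ already implies the a priori stronger-looking condition of Theorem~\ref{T1} with a slightly smaller $\alpha$ and a smaller $\eps$, after which Theorem~\ref{T1} delivers subtransversality and the estimate $\str\ge\al$.

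Concretely, I would argue as follows. Assume the hypothesis of Theorem~\ref{T1'} holds with some $\alpha\in\,]0,1[$ and $\delta>0$. Fix $a\in(A\setminus B)\cap\B_\de(\bx)$, $b\in(B\setminus A)\cap\B_\de(\bx)$, $x\in\B_\de(\bx)$ with $\norm{x-a}=\norm{x-b}$, and let $\eps>0$ be the radius provided by the hypothesis for this triple. Pick any $\alpha'\in\,]0,\alpha[$. The first step is to choose a smaller radius $\eps'\in\,]0,\eps[$ with the property that, for all $x'\in\B_{\eps'}(x)$, $x_1'\in\B_{\eps'}(a)$, $x_2'\in\B_{\eps'}(b)$ satisfying $\norm{x'-x_1'}=\norm{x'-x_2'}$, there exist $\tilde x_1'\in\B_\eps(a)$ and $\tilde x_2'\in\B_\eps(b)$ with $\norm{x-\tilde x_1'}=\norm{x-\tilde x_2'}$ such that the pair $(\tilde x_1',\tilde x_2')$ is close enough to $(x_1',x_2')$ and the vectors $x'-x_i'$ and $x-\tilde x_i'$ are close in direction. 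Since $a\notin B$ and $b\notin A$ while $\bx\in A\cap B$, we may also assume $\eps$ was small enough that $a\ne x$, $b\ne x$ and none of the relevant differences vanishes, so directions are well defined and depend continuously on the data.

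The construction of $(\tilde x_1',\tilde x_2')$ from $(x_1',x_2')$ is the technical heart of the argument and the step I expect to be the main obstacle: one must adjust the two perturbed points so that their distances to the fixed point $x$ become equal while staying within $\B_\eps(a)$ and $\B_\eps(b)$ and not moving the directions $x'-x_i'$ by more than a prescribed amount. A clean way is to move only along the segments $[x,x_1']$ and $[x,x_2']$ (or to rescale $x_i'-x$), which keeps the directions $x-\tilde x_i'$ exactly equal to $x-x_i'$ pointing the same way as $x'-x_i'$ up to a small error controlled by $\norm{x'-x}\le\eps'$; then solving $\norm{x-\tilde x_1'}=\norm{x-\tilde x_2'}$ is a one-dimensional interpolation between $\norm{x-x_1'}$ and $\norm{x-x_2'}$, whose difference is itself $O(\eps')$ because $\norm{x'-x_1'}=\norm{x'-x_2'}$. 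Hence for $\eps'$ small the adjusted points lie in the required balls. Given any $x_1^*,x_2^*$ satisfying \eqref{T1-2} and \eqref{T1-3} (relative to some $a'\in A\cap\B_{\eps'}(a)\subset A\cap\B_\eps(a)$, $b'\in B\cap\B_{\eps'}(b)\subset B\cap\B_\eps(b)$, $x',x_1',x_2'$ as above), the alignment conditions with respect to $x-\tilde x_i'$ hold only approximately; but because $\norm{x^*_i}\le1$ and the directions differ by $O(\eps')$, the defect in $\ang{x^*_i,x-\tilde x_i'}-\norm{x^*_i}\norm{x-\tilde x_i'}$ is $O(\eps')$, so a standard convexity/exposed-point perturbation lemma (replacing $x_i^*$ by a nearby functional exactly aligned with $x-\tilde x_i'$, at the cost of a norm change $O(\eps')$) produces $\tilde x_1^*,\tilde x_2^*$ satisfying \eqref{T1-1'} exactly with $\norm{\tilde x_1^*}+\norm{\tilde x_2^*}$ within $O(\eps')$ of $1$ and still $O(\delta+\eps')$-close to $N_A(a')$, $N_B(b')$. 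Choosing $\eps'$ small enough that all these errors are below, say, $(\alpha-\alpha')/4$, the hypothesis of Theorem~\ref{T1'} applied at the triple $(a,b,x)$ with the fixed point yields $\norm{\tilde x_1^*+\tilde x_2^*}>\alpha$ after renormalization, hence $\norm{x_1^*+x_2^*}>\alpha'$. This is exactly the conclusion required by Theorem~\ref{T1} with constant $\alpha'$ and radius $\eps'$ (and the same outer $\delta$, possibly shrunk uniformly so that the $\eps$-neighbourhoods in \eqref{T1-3} remain inside the original $\delta$-tube). Applying Theorem~\ref{T1} gives subtransversality of $\{A,B\}$ at $\bx$ and $\str\ge\alpha'$; since $\alpha'<\alpha$ was arbitrary, $\str\ge\alpha$, which completes the proof.
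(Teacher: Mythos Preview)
Your overall plan --- deduce Theorem~\ref{T1'} from Theorem~\ref{T1} by translating data with the moving point $x'$ into data with the fixed point $x$ --- is exactly the paper's strategy. But you miss the one-line observation that makes the reduction exact, and in its place you introduce an approximation step that is not justified.

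The paper simply sets $x_i'':=x_i'+(x-x')$ for $i=1,2$. Then $x-x_i''=x'-x_i'$ \emph{identically}, so $\norm{x-x_1''}=\norm{x'-x_1'}=\norm{x'-x_2'}=\norm{x-x_2''}$, and the alignment conditions $\ang{x_i^*,x-x_i''}=\norm{x_i^*}\norm{x-x_i''}$ hold \emph{exactly} for the \emph{same} functionals $x_i^*$ that satisfied \eqref{T1-2}. Taking $\eps':=\eps/2$ guarantees $x_i''\in\B_\eps(a)$ (resp.\ $\B_\eps(b)$), and the hypothesis of Theorem~\ref{T1'} immediately gives $\norm{x_1^*+x_2^*}>\alpha$. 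No loss of constant, no limiting $\alpha'\uparrow\alpha$, no perturbation of functionals.

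Your construction instead rescales $x_i'$ along the segment $[x,x_i']$. That preserves the direction $x-x_i'$, not the direction $x'-x_i'$ on which $x_i^*$ is actually aligned; the two differ by $x-x'$, so you only get approximate alignment with $x-\tilde x_i'$. You then invoke a ``standard convexity/exposed-point perturbation lemma'' to replace each $x_i^*$ by a nearby functional that \emph{exactly} norms $x-\tilde x_i'$. This is the gap: in a general Asplund space there is no such lemma. That a functional $x^*$ nearly attains its norm at a unit vector $u$ does \emph{not} imply $x^*$ is norm-close to the face of $\B^*$ that norms $u$; this kind of stability requires additional geometric assumptions on the dual norm that are not available here. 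Without it your argument does not close. The remedy is precisely the translation trick above --- which, incidentally, is the natural endpoint of your own idea if you move $x_i'$ in the direction $x'-x_i'$ rather than $x-x_i'$.
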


The conditions in Theorem~\ref{T1} obviously imply those in Theorem~\ref{T1'}.
In fact, the opposite implication is also true, and
Theorem~\ref{T1'} is a consequence of Theorem~\ref{T1}.


\begin{proof}[Theorem~\ref{T1'} from Theorem~\ref{T1}]
Suppose the conditions of Theorem~\ref{T1'} are satisfied with some numbers $\alpha\in]0,1[$ and $\delta>0$.
Take any $a\in(A\setminus B)\cap\B_\de(\bx)$, $b\in(B\setminus A)\cap\B_\de(\bx)$ and $x\in\B_\de(\bx)$ with $\norm{x-a}=\norm{x-b}$, and choose an $\eps>0$ in accordance with the conditions of Theorem~\ref{T1'}.
Next set $\eps':=\eps/2$ and take any $a'\in A\cap\B_{\eps'}(a)$, $b'\in B\cap\B_{\eps'}(b)$, $x_1'\in\B_{\eps'}(a)$, $x_2'\in\B_{\eps'}(b)$, $x'\in\B_{\eps'}(x)$ with $\norm{x'-x_1'}=\norm{x'-x_2'}$, and $x_1^*,x_2^*\in X^*$
satisfying \eqref{T1-2} and \eqref{T1-3}.
Then $x_1'':=x_1'+x-x'\in\B_\eps(a)$, $x_2'':=x_2'+x-x'\in\B_\eps(b)$, $\norm{x-x_1''}=\norm{x-x_2''}$, and conditions \eqref{T1-1'} are satisfied with $x_1''$ and $x_2''$ in place of $x_1'$ and $x_2'$, respectively.
Hence, $\|x^*_1+x^*_2\|>\alpha$, i.e., the conditions of Theorem~\ref{T1} are satisfied with the same numbers $\alpha\in]0,1[$ and $\delta>0$, and $\{A,B\}$ is subtransversal at $\bar x$ with $\str\ge\al$.
\qed\end{proof}

\begin{remark}
1. It is sufficient to check the conditions of  Theorems~\ref{T0}--\ref{T1'} only for $x^*_1\ne0$ and $x^*_2\ne0$.
Indeed, if one of the vectors $x^*_1$ and $x^*_2$ equals 0, then by the normalization condition $\|x^*_1\|+\|x^*_2\|=1$, the norm of the other one equals 1, and consequently $\|x^*_1+x^*_2\|=1$, i.e., such pairs $x^*_1,x^*_2$ do not impose any restrictions on $\al$.

2. Similarly to the classical condition \eqref{1}, the (sub)transversality characterizations in Theorems~\ref{T0}--\ref{T1'} require that among all admissible (i.e., satisfying all the conditions of the theorems) pairs of nonzero elements $x^*_1$ and $x^*_2$ there is no one with $x^*_1$ and $x^*_2$ oppositely directed.

3. The sum $\|x^*_1\|+\|x^*_2\|$ in Theorems~\ref{T0}--\ref{T1'} corresponds to the sum norm on $\R^2$, which is dual to the maximum norm on $\R^2$ used in Definition~\ref{D1}.
It can be replaced by $\max\{\|x^*_1\|,\|x^*_2\|\}$ (cf. \cite[(6.11)]{Pen13}) or any other norm on $\R^2$.
\xqed\end{remark}

The proof of Theorems~\ref{T1} and \ref{T2} given in \cite{KruLukTha2} follows the sequence proposed in \cite{Kru15} when deducing metric subregularity criteria for \SVM s and consists of a series of propositions providing lower primal and dual estimates for the constant $\str$ and, thus, sufficient conditions for the subtransversality of the pair $\{A,B\}$ at $\bx$ which can be of independent interest.
In what follows, we will use notations $\itrd{w}$ and $\strc$ for the supremum of all $\al$ in Theorems~\ref{T1'} and \ref{T2}, respectively, with the convention that the supremum over the empty set equals 0.
It is easy to check the following explicit representations of the two constants:

\begin{align}\notag
\itrd{w}:=& \lim_{\rho\downarrow0} \inf_{\substack{a\in(A\setminus B)\cap\B_\rho(\bx),\;b\in(B\setminus A)\cap\B_\rho(\bx)\\x\in\B_\rho(\bx),\; \norm{x-a}=\norm{x-b}}}
\\\label{itrw}
&\liminf_{\substack{x_1'\to a,\;x_2'\to b,\; a'\to a,\;b'\to b\\a'\in A,\;b'\in B,\; \|x-x_1'\|=\|x-x_2'\|\\ d(x_1^*,N_{A}(a'))<\rho,\; d(x_2^*,N_{B}(b'))<\rho,\; \|x_1^*\|+\|x_2^*\|=1
\\
\langle x_1^*,x-x_1'\rangle=\|x_1^*\|\,\|x-x_1'\|,\;
\langle x_2^*,x-x_2'\rangle=\|x_2^*\|\,\|x-x_2'\|}}
\|x^*_1+x^*_2\|,
\\\label{itrc}
\strc:=& \liminf_{\substack{x\to\bx,\;a\to\bx,\;b\to\bx\\a\in A\setminus B,\;b\in B\setminus A,\; \norm{x-a}=\norm{x-b}\\ d(x_1^*,N_{A}(a))\to0,\;d(x_2^*,N_{B}(b))\to0,\; \|x_1^*\|+\|x_2^*\|=1
\\
\langle x_1^*,x-a\rangle=\|x_1^*\|\,\|x-a\|,\;
\langle x_2^*,x-b\rangle=\|x_2^*\|\,\|x-b\|}}
\|x^*_1+x^*_2\|,
\end{align}
with the convention that the infimum over the empty set equals 1.

\paragraph{Intrinsic transversality.}

The two-limit definition \eqref{itrw} as well as the corresponding dual space sufficient characterization of subtransversality in Theorem~\ref{T1'} look complicated and difficult to verify.
The following one-limit modification of \eqref{itrw} in terms of Fr\'echet normals can be useful:
\begin{gather}\label{itr}
\itr:=\liminf\limits_{\substack{
a\to\bar x,\; b\to\bar x,\; x\to\bar x\\
a\in A\setminus B,\;b\in B\setminus A,\;x\ne a,\;x\ne b\\ x^*_1\in N_{A}(a)\setminus\{0\},\;x^*_2\in N_{B}(b)\setminus\{0\},\;\norm{x^*_1}+\norm{x^*_2}=1
\\
\frac{\norm{x-a}}{\norm{x-b}}\to1,\; \frac{\ang{x^*_1,x-a}}{\norm{x^*_1}\norm{x-a}}\to1,\; \frac{\ang{x^*_2,x-b}}{\norm{x^*_2}\norm{x-b}}\to1}}
\|x^*_1+x^*_2\|,
\end{gather}
with the convention that the infimum over the empty set equals 1.

The relationships between the constants $\str$, $\strc$, $\itr$ and $\itrd{w}$ are collected in the next proposition.

\begin{proposition}\label{P10}
Suppose $X$ is a Banach space, $A,B\subset X$ are closed, and $\bx\in A\cap B$.
\begin{enumerate}
\item
$0\le\itr\le\itrd{w}\le\strc\le1$;
\item
if $X$ is Asplund, then $\str\ge\itrd{w}$;
\item
if $\dim X<\infty$, then
\begin{gather}\label{itr"}
\itrd{w}= \liminf_{\substack{a\to\bx,\;b\to\bx,\; x\to\bx\\a\in A\setminus B,\;b\in B\setminus A,\; \norm{x-a}=\norm{x-b}\\ d(x_1^*,\overline{N}_{A}(a))\to0,\; d(x_2^*,\overline{N}_{B}(b))\to0,\; \|x_1^*\|+\|x_2^*\|=1
\\
\langle x_1^*,x-a\rangle=\|x_1^*\|\,\|x-a\|,\;
\langle x_2^*,x-b\rangle=\|x_2^*\|\,\|x-b\|}}
\|x^*_1+x^*_2\|,
\end{gather}
with the convention that the infimum over the empty set equals 1;
\item
if $A$ and $B$ are convex, then $\str=\strc$;
\item
if $\dim X<\infty$, and $A$ and $B$ are convex, then $\itrd{w}=\strc=\str$.
\end{enumerate}
\end{proposition}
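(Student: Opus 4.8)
My plan is to settle parts (ii), (iv) and (v) almost immediately from what is already available and to concentrate on parts (i) and (iii). Recall that $\itrd{w}$ is, by definition, the supremum of all $\al\in]0,1[$ for which the sufficient condition of Theorem~\ref{T1'} holds with some $\de>0$, and $\strc$ the supremum of all $\al$ for which the condition of Theorem~\ref{T2} holds with some $\de>0$; the explicit formulas \eqref{itrw} and \eqref{itrc} are just the $\liminf$ rewritings of these suprema. Then part (ii) is immediate: for $X$ Asplund, Theorem~\ref{T1'} yields $\str\ge\al$ for every admissible $\al$, hence $\str\ge\itrd{w}$. Part (iv) is immediate too: for $A,B$ convex, the ``Moreover'' clause of Theorem~\ref{T2} states precisely that this supremum equals $\str$, i.e.\ $\strc=\str$. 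Finally, part (v) will follow by combining part (iv) with part (iii): for convex $A,B$ the limiting and Fr\'echet normal cones both coincide with the normal cone of convex analysis, so the representation \eqref{itr"} supplied by part (iii) turns literally into \eqref{itrc}, whence $\itrd{w}=\strc=\str$.

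\emph{Part (i).} The bounds $0\le\itr$ and $\strc\le1$ (in fact all four constants lie in $[0,1]$) are trivial, since $\|x^*_1+x^*_2\|\le\|x^*_1\|+\|x^*_2\|=1$ for every admissible pair and the infimum over the empty set is $1$ by convention. For $\itrd{w}\le\strc$, every tuple $(a,b,x,x^*_1,x^*_2)$ admissible in \eqref{itrc} is admissible in the inner $\liminf$ of \eqref{itrw} attached to the triple $(a,b,x)$ through the constant choice $x_1'=a'=a$, $x_2'=b'=b$, so that inner $\liminf$ is $\le\|x^*_1+x^*_2\|$; taking infima and letting $\rho\downarrow0$ gives the inequality. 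For $\itr\le\itrd{w}$ one may assume $\itrd{w}<1$. Given $\eps>0$ small and $\rho$ small, pick admissible data in \eqref{itrw} with $\|x^*_1+x^*_2\|<\itrd{w}+\eps$; the normalization together with $\|x^*_1+x^*_2\|<1$ forces $\|x^*_1\|,\|x^*_2\|\ge(1-\itrd{w}-\eps)/2>0$, so replacing $x^*_1$ by a nearest point of $N_A(a')\setminus\{0\}$ (at distance $<\rho$), and similarly $x^*_2$, and renormalizing, perturbs $\|x^*_1+x^*_2\|$ and the alignment quotients $\langle x^*_1,x-x_1'\rangle/(\|x^*_1\|\,\|x-x_1'\|)$ etc.\ by $O(\rho)+O(\eta)$, with $\eta$ bounding $\|x_1'-a'\|,\|x_2'-b'\|$; a diagonal passage $\rho,\eta\downarrow0$ produces a sequence admissible in \eqref{itr} with $\liminf\|x^*_1+x^*_2\|\le\itrd{w}$.

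\emph{Part (iii).} I would prove the two inequalities between \eqref{itrw} and the right-hand side of \eqref{itr"} separately. For ``$\le$'', start from data nearly realizing the right-hand side of \eqref{itr"}; since $\overline{N}_{A}(a)=\Limsup_{a'\to a,\,a'\in A}N_A(a')$ (and likewise for $B$), a point of $\overline{N}_{A}(a)$ within $\rho$ of $x^*_1$ is a limit of Fr\'echet normals $z^*_k\in N_A(a_k')$ with $a_k'\to a$, so $d(x^*_1,N_A(a_k'))<\rho$ for $k$ large; choosing $x_1'=a$, $x_2'=b$ turns exact alignment with $a,b$ into exact alignment with $x_1',x_2'$, and one obtains admissible data for the inner $\liminf$ of \eqref{itrw} with the same value, hence $\itrd{w}\le$ the right-hand side of \eqref{itr"}. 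For ``$\ge$'', fix a triple $(a,b,x)$ in the outer infimum of \eqref{itrw} (necessarily $x\ne a$, $x\ne b$) and a sequence realizing the corresponding inner $\liminf$. Since $\dim X<\infty$, the bounded sequences $x^*_{1,k},x^*_{2,k}$ have a convergent subsequence $x^*_{i,k}\to\bar x^*_i$; then $\|\bar x^*_1\|+\|\bar x^*_2\|=1$, the bounds $d(x^*_{1,k},N_A(a_k'))<\rho$, $d(x^*_{2,k},N_B(b_k'))<\rho$ with boundedness and the $\Limsup$ description give $d(\bar x^*_1,\overline{N}_{A}(a))\le\rho$ and $d(\bar x^*_2,\overline{N}_{B}(b))\le\rho$, and --- crucially --- passing to the limit in $\langle x^*_{1,k},x-x_{1,k}'\rangle=\|x^*_{1,k}\|\,\|x-x_{1,k}'\|$ (recall $x_{1,k}'\to a$) yields the exact alignment $\langle\bar x^*_1,x-a\rangle=\|\bar x^*_1\|\,\|x-a\|$, and similarly for $b$. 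Thus the inner $\liminf$ for $(a,b,x)$ is at least the infimum of $\|x^*_1+x^*_2\|$ over all tuples meeting the constraints in the right-hand side of \eqref{itr"} at level $\rho$ with base points in $\B_\rho(\bx)$; letting $\rho\downarrow0$ gives $\itrd{w}\ge$ the right-hand side of \eqref{itr"}, and equality follows.

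\emph{Where the difficulty lies.} The only genuinely delicate step is the inequality ``$\ge$'' of part (iii): reconciling the loose data of \eqref{itrw} --- a normal taken at a perturbed point $a'$ and aligned with a \emph{different} perturbation $x_1'$, both merely converging to $a$ --- with the rigid data of \eqref{itr"}, where a limiting normal at the single point $a$ must be \emph{exactly} aligned with $x-a$. This is handled by the compactness of balls in $X^*$, available precisely because $\dim X<\infty$, together with the facts that the alignment equalities are closed conditions stable under limits, while approximate normal-cone memberships at $a'\to a$ get promoted (up to $\rho$) to memberships in $\overline{N}_{A}(a)$ through the $\Limsup$ definition. A milder form of the same care --- the exact/approximate normal distinction plus renormalization --- is what makes $\itr\le\itrd{w}$ in part (i) slightly more than a formality.
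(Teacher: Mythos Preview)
Your proposal is correct. For parts (ii) and (iv) you invoke Theorems~\ref{T1'} and~\ref{T2} exactly as the paper does, and for part (i) you spell out what the paper calls ``immediate from the definitions'' --- your observation that $\itr\le\itrd{w}$ needs a small renormalization/diagonal argument is fair, and your sketch is sound (just note that the perturbation of the alignment quotients is $O(\rho)+O(\eta/\|x-a\|)$ rather than $O(\eta)$, which is harmless since $\eta$ is at your disposal for each fixed $(a,b,x)$).

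For parts (iii) and (v) the paper does not argue at all but cites \cite{KruLukTha2}; you instead supply a self-contained proof. Your route for (iii) --- using finite-dimensional compactness of the dual ball to pass to subsequential limits of the inner data, promoting approximate Fr\'echet-normal membership at $a'\to a$ to limiting-normal membership at $a$ via the $\Limsup$ description, and noting that the exact alignment equalities are closed under limits --- is the natural one and is correct. Your derivation of (v) from (iii) and (iv), using that $\overline{N}_A=N_A$ for convex sets so that \eqref{itr"} collapses to \eqref{itrc}, is a clean shortcut.
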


\begin{proof}
Part (i) follows immediately from the definitions.
Parts (ii) and (iv) are consequences of Theorems~\ref{T1'} and \ref{T2}, respectively.
Parts (iii) and (v) have been established in \cite{KruLukTha2}.
\qed\end{proof}

The property introduced in Theorem~\ref{T1} (or equivalently, Theorem~\ref{T1'}) as a sufficient dual space characterization of subtransversality and corresponding to the condition $\itrd{w}>0$ as well as the stronger property corresponding to the condition $\itr>0$ are themselves important transversality properties of the pair $\{A,B\}$ at $\bx$.
Borrowing partially the terminology from \cite{DruIofLew15}, these properties are referred to in \cite{KruLukTha2} as \emph{weak intrinsic transversality} and \emph{intrinsic transversality}, respectively.
\begin{definition}\label{D3+}
Suppose $X$ is a normed linear space, $A,B\subset X$ are closed, and $\bx\in A\cap B$.
\begin{enumerate}
\item
$\{A,B\}$ is
weakly intrinsically transversal at $\bar x$ if $\itrd{w}>0$, i.e., there exist numbers $\alpha\in]0,1[$ and $\delta>0$ such that, for all $a\in(A\setminus B)\cap\B_\de(\bx)$, $b\in(B\setminus A)\cap\B_\de(\bx)$ and $x\in\B_\de(\bx)$
with $\norm{x-a}=\norm{x-b}$,
one has $\|x^*_1+x^*_2\|>\alpha$ for some $\eps>0$ and all $a'\in A\cap\B_\eps(a)$, $b'\in B\cap\B_\eps(b)$, $x_1'\in\B_\eps(a)$, $x_2'\in\B_\eps(b)$ with $\norm{x-x_1'}=\norm{x-x_2'}$, and $x_1^*,x_2^*\in X^*$ satisfying conditions \eqref{T1-3} and \eqref{T1-1'};
\item
$\{A,B\}$ is
intrinsically transversal at $\bar x$ if $\itr>0$, i.e., there exist numbers $\alpha\in]0,1[$ and $\delta>0$ such that $\|x^*_1+x^*_2\|>\alpha$
for all $a\in(A\setminus B)\cap\B_\de(\bx)$, $b\in(B\setminus A)\cap\B_\de(\bx)$, $x\in\B_\de(\bx)$ with
$x\ne a$, $x\ne b$, $1-\delta<\frac{\norm{x-a}}{\norm{x-b}}<1+\delta$,
and $x_1^*\in N_{A}(a)\setminus\{0\}$, $x_2^*\in N_{B}(b)\setminus\{0\}$ satisfying
\begin{gather*}
\norm{x^*_1}+\norm{x^*_2}=1,\quad\frac{\ang{x_1^*,x-a}} {\|x_1^*\|\|x-a\|} >1-\delta,\quad
\frac{\ang{x_2^*,x-b}} {\|x_2^*\|\|x-b\|} >1-\delta.
\end{gather*}
\end{enumerate}
\end{definition}

\begin{remark}\label{R7}
1. The properties introduced in Definition~\ref{D3+} are less restrictive than the dual criterion of transversality in Theorem~\ref{T0}.

2. Unlike the transversality and subtransversality properties defined originally by the primal space Definition~\ref{D1} with the dual space characterizations (not always equivalent!) given by Theorems~\ref{T0}--\ref{T1'}, the intrinsic transversality and weak intrinsic transversality properties are defined in Definition~\ref{D3+} directly in dual space terms and do not have in general equivalent primal space representations.
\xqed\end{remark}

In view of Definition~\ref{D3+}, Theorem~\ref{T1'} says that in Asplund spaces weak intrinsic transversality (and consequently intrinsic transversality) implies subtransversality.
Thanks to Proposition~\ref{P10}(i) and (iii), and Remark~\ref{R7}, we have the following relationships between the transversality properties in Asplund spaces.

\begin{corollary}
Suppose $X$ is Asplund, $A,B\subset X$ are closed, and $\bx\in A\cap B$.
Consider the following conditions:
\begin{enumerate}
\item
$\{A,B\}$ is transversal at $\bar x$;
\item
$\{A,B\}$ is intrinsically transversal at $\bar x$;
\item
$\{A,B\}$ is weakly intrinsically transversal at $\bar x$;
\item
$\{A,B\}$ is subtransversal at $\bar x$.
\end{enumerate}
Then {\rm (i)} $\Rightarrow$ {\rm (ii)} $\Rightarrow$ {\rm (iii)} $\Rightarrow$ {\rm (iv)}.
If $\dim X<\infty$, and $A$ and $B$ are convex, then {\rm (iii)} $\Leftrightarrow$ {\rm (iv)}.
\end{corollary}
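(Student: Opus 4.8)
The plan is to recast each of the four properties as a positivity statement about its regularity constant — by Definition~\ref{D1} and the remarks following it together with Definition~\ref{D3+}, $\{A,B\}$ is transversal, intrinsically transversal, weakly intrinsically transversal, subtransversal at $\bx$ precisely when $\tr>0$, $\itr>0$, $\itrd{w}>0$, $\str>0$, respectively — and then to obtain each implication from a comparison between these constants. Three of the four comparisons needed are already recorded: $\itr\le\itrd{w}$ is the middle inequality of Proposition~\ref{P10}(i); $\str\ge\itrd{w}$, valid because $X$ is Asplund, is Proposition~\ref{P10}(ii); and $\itrd{w}=\str$, valid when $\dim X<\infty$ and $A,B$ are convex, is contained in Proposition~\ref{P10}(v). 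So the whole corollary will follow once the missing link $\itr\ge\tr$ is established.

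For that link — the one place where a short argument is actually needed — I would use the dual transversality criterion of Theorem~\ref{T0}. Fix any $\alpha\in]0,1[$ for which Theorem~\ref{T0} provides a $\delta>0$ such that $\|x^*_1+x^*_2\|>\alpha$ whenever $a\in A\cap\B_\delta(\bx)$, $b\in B\cap\B_\delta(\bx)$, $x_1^*\in N_A(a)$, $x_2^*\in N_B(b)$ and $\|x^*_1\|+\|x^*_2\|=1$. Every tuple $(a,b,x,x^*_1,x^*_2)$ entering the $\liminf$ defining $\itr$ in \eqref{itr} satisfies, once $a$ and $b$ are close enough to $\bx$, exactly these hypotheses: it merely carries the extra restrictions $a\notin B$, $b\notin A$, $x^*_i\ne0$, and the three ratio/directional conditions involving $x$, none of which can enlarge the admissible family. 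Hence $\|x^*_1+x^*_2\|>\alpha$ for all admissible tuples with $a,b$ sufficiently close to $\bx$, so $\itr\ge\alpha$; taking the supremum over all such $\alpha$ and recalling that, by Theorem~\ref{T0}, this supremum equals $\tr$ yields $\itr\ge\tr$. (If near $\bx$ there are no admissible tuples at all, then $\itr=1\ge\tr$ by the stated convention, so the inequality holds trivially.) This is precisely the content of Remark~\ref{R7}(1).

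Assembling the pieces: $\tr>0\Rightarrow\itr>0$ gives (i)$\Rightarrow$(ii); $\itr\le\itrd{w}$ gives (ii)$\Rightarrow$(iii); $\str\ge\itrd{w}$, available since $X$ is Asplund, gives (iii)$\Rightarrow$(iv) — this is just Theorem~\ref{T1'} read through the constants; and when in addition $\dim X<\infty$ and $A,B$ are convex, the equality $\itrd{w}=\str$ upgrades the last implication to the equivalence (iii)$\Leftrightarrow$(iv). The only genuine care needed anywhere is the bookkeeping in the step $\itr\ge\tr$ — matching the neighbourhoods and checking that restricting to the smaller family of configurations can only increase the infimum; everything else is a formal consequence of Proposition~\ref{P10}, so no real obstacle is expected.
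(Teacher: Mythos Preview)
Your proposal is correct and matches the paper's own approach: the corollary is stated there as an immediate consequence of Proposition~\ref{P10}(i), Theorem~\ref{T1'} (equivalently Proposition~\ref{P10}(ii)), Remark~\ref{R7}(1), and the finite-dimensional convex equality from Proposition~\ref{P10}. Your only addition is to spell out, via Theorem~\ref{T0}, the argument behind Remark~\ref{R7}(1) and the quantitative inequality $\itr\ge\tr$, which is exactly the justification the paper leaves implicit.
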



\section{Intrinsic transversality and relative limiting normals}\label{S3}
From now on we assume that $\dim X<\infty$.
\paragraph{Intrinsic transversality in finite dimensions.}
Definition~\ref{D3+} introduces certain limiting processes (cf. definitions \eqref{itrw}, \eqref{itrc} and \eqref{itr} and representation \eqref{itr"}) and can lead naturally to employing certain limiting normals to the sets under consideration.
Observe that not all limiting normals are relevant for characterizing the intrinsic transversality and weak intrinsic transversality properties of a pair of sets.
Only those normals to each of the sets can be of interest which take into account the relative location of the other set.
It makes sense considering pairs of normals approximately `directed' towards the same point.
This observation motivates considering pairs of relative limiting normals.

\begin{definition}\label{D5}
Suppose $A,B\subset X$ and $\bx\in A\cap B$.
\begin{enumerate}
\item
A pair $(x^*_1,x^*_2)\in X^*\times X^*$ is called
a \emph{pair of relative limiting normals} to $\{A,B\}$ at $\bar x$ if there exist sequences $(a_k)\subset A\setminus B$, $(b_k)\subset B\setminus A$, $(x_k)\subset X$ and $(x_{1k}^*),(x_{2k}^*)\subset X^*$ such that $x_k\ne a_k$, $x_k\ne b_k$ $(k=1,2,\ldots)$, $a_k\to\bx$, $b_k\to\bx$, $x_k\to\bx$, $x_{1k}^*\to x_1^*$, $x_{2k}^*\to x_2^*$, and
\begin{gather*}
x_{1k}^*\in N_A(a_k),\quad x_{2k}^*\in N_B(b_k)\quad (k=1,2,\ldots),
\\
\frac{\norm{x_k-a_k}}{\norm{x_k-b_k}} \to1,\quad
\frac{\ang{x_{1k}^*,x_k-a_k}}{\norm{x_{1k}^*}\norm{x_k-a_k}} \to1,\quad
\frac{\ang{x_{2k}^*,x_k-b_k}}{\norm{x_{2k}^*}\norm{x_k-b_k}} \to1,
\end{gather*}
with the convention that $\frac{0}{0}=1$.
The collections of all pairs of relative limiting normals to $\{A,B\}$ at $\bar x$ will be denoted by $\overline{N}_{A,B}(\bar x)$.
\item
A pair $(x^*_1,x^*_2)\in X^*\times X^*$ is called
a \emph{pair of restricted relative limiting normals} to $\{A,B\}$ at $\bar x$ if there exist sequences $(a_k)\subset A\setminus B$, $(b_k)\subset B\setminus A$, $(x_k)\subset X$ and $(x_{1k}^*),(x_{2k}^*)\subset X^*$ such that $\norm{x_k-a_k}=\norm{x_k-b_k}$ $(k=1,2,\ldots)$,
$a_k\to\bx$, $b_k\to\bx$, $x_k\to\bx$, $x_{1k}^*\to x_1^*$, $x_{2k}^*\to x_2^*$, and
\begin{gather*}
d(x_{1k}^*,N_A(a_k))\to0,\quad d(x_{2k}^*,N_B(b_k))\to0,
\\
\ang{x_{1k}^*,x_k-a_k}=\norm{x_{1k}^*}\norm{x_k-a_k},\quad
\ang{x_{2k}^*,x_k-b_k}=\norm{x_{2k}^*}\norm{x_k-b_k}\quad (k=1,2,\ldots).
\end{gather*}
The collections of all pairs of restricted relative limiting normals to $\{A,B\}$ at $\bar x$ will be denoted by $\overline{N}{}^c_{A,B}(\bar x)$.
\end{enumerate}
\end{definition}

Thus, $\overline{N}_{A,B}(\bar x)$ and $\overline{N}{}^c_{A,B}(\bar x)$ are formed by limits of certain sequences of pairs of Fr\'echet normals to each of the sets `directed' approximately towards the same point.

\begin{remark}\label{D3}
1. In Definition~\ref{D5}, one can always assume that $\norm{x^*_{1k}}=\norm{x^*_1}$, $\norm{x^*_{2k}}=\norm{x^*_2}$, $(k=1,2,\ldots)$.
Indeed, if e.g. $x^*_1=0$, one can take $x^*_{1k}:=0$ $(k=1,2,\ldots)$; if $x^*_1\ne0$, then, without loss of generality, $x^*_{1k}\ne0$ $(k=1,2,\ldots)$, and one can substitute $x^*_{1k}$ with $(x^*_{1k})':=\frac{\norm{x^*_1}}{\norm{x^*_{1k}}}{x^*_{1k}}$.
The same argument applies to $x^*_2$ and $(x^*_{2k})$.

2. Given a subset $A\subset X$, a point $\bx\in A$, and a sequence $(x_k)\subset X$ converging to $\bx$,
it could make sense considering
the set $\overline{N}_{A}(\bar x;(x_k))$ of \emph{limiting normals} to $A$ at $\bx$ relative to $(x_k)$ defined as the set of
vectors $x^*\in X^*$ such that there exist sequences $(a_k)\subset A$ and $(x_{k}^*)\subset X^*$ such that $a_k\ne x_k$ $(k=1,2,\ldots)$, $a_k\to\bx$, $x_{k}^*\to x^*$ as $k\to\infty$ and
\begin{gather*}
x_{k}^*\in N_A(a_k)\quad (k=1,2,\ldots),\quad
\frac{\ang{x_{k}^*,x_k-a_k}}{\norm{x_k^*}\norm{x_k-a_k}} \to1,
\end{gather*}
with the convention that $\frac{0}{0}=1$.

This definition is an important ingredient of Definition~\ref{D5}(i) above.
If $x^*\in\overline{N}_{A}(\bar x;(x_k))$ and $(a_k)\subset A$ is a sequence corresponding to $x^*$ in accordance with this definition, then one has $\ang{x^*,x}=\norm{x^*}$ for any limiting point $x$ of the sequence $\left(\frac{x_k-a_k}{\norm{x_k-a_k}}\right)$.
Obviously, $\overline{N}_{A}(\bar x;(x_k))$ is a cone in $X^*$, and
\begin{gather}\label{84}
\overline{N}_{A}(\bar x;(x_k))\subset \overline{N}_{A}(\bar x).
\end{gather}
Since $\dim X<\infty$, it is easy to check that the cone $\overline{N}_{A}(\bar x;(x_k))$ is closed.
$\overline{N}_{A}(\bar x;(x_k))$ can be empty.
Indeed, if e.g., $A=\{\bx\}$ and $x_k=\bx$ $(k=1,2,\ldots)$,
then there is no sequence $(a_k)\subset A$ with $a_k\ne x_k$, and consequently $\overline{N}_{A}(\bar x;(x_k))=
\emptyset$.
\xqed\end{remark}



\begin{proposition}\label{P7++}
Suppose $A,B\subset X$ and $\bx\in A\cap B$.
\begin{enumerate}
\item
Each of the sets $\overline{N}_{A,B}(\bar x)$ and $\overline{N}{}^c_{A,B}(\bar x)$ is a closed cone in $X^*\times X^*$, possibly empty.
Moreover, if the set contains a pair $(x^*_1,x^*_2)$, then, it also contains the pairs $(t_1 x^*_1,t_2 x^*_2)$ for all $t_1>0$ and $t_2>0$.
\item
$\overline{N}{}^c_{A,B}(\bar x) \subset \overline{N}_{A,B}(\bar x) \subset \bigcup\limits_{\substack{(x_k)\to \bar{x}}} \overline{N}_{A}(\bar x;(x_k)) \times \overline{N}_{B}(\bar x;(x_k))\subset \overline{N}_{A}(\bar x) \times \overline{N}_{B}(\bar x)$.
\end{enumerate}
\end{proposition}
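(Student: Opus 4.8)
The statement splits into routine homogeneity/closedness bookkeeping for part~(i) and a short chain of inclusions for part~(ii), only one link of which requires real work. I will treat $\overline{N}_{A,B}(\bx)$ and $\overline{N}{}^c_{A,B}(\bx)$ in parallel, since the only difference between their definitions is that the membership $x_{ik}^*\in N(\cdot)$ and the limiting ratios are replaced, respectively, by the conditions $d(x_{ik}^*,N(\cdot))\to0$ and the exact equalities $\norm{x_k-a_k}=\norm{x_k-b_k}$, $\ang{x_{ik}^*,\cdot}=\norm{x_{ik}^*}\norm{\cdot}$.

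First I would prove the ``moreover'' (positive homogeneity in each factor). Given a pair $(x^*_1,x^*_2)$ in $\overline{N}_{A,B}(\bx)$ with witnessing sequences $(a_k),(b_k),(x_k),(x_{1k}^*),(x_{2k}^*)$ and given $t_1,t_2>0$, I would simply replace $x_{ik}^*$ by $t_ix_{ik}^*$: each Fr\'echet normal cone $N_A(a_k)$, $N_B(b_k)$ is a cone, so the new vectors still lie in it; the ratios $\ang{x_{ik}^*,\cdot}/(\norm{x_{ik}^*}\norm{\cdot})$ are invariant under multiplication of $x_{ik}^*$ by a positive scalar (and the convention $\tfrac00=1$ is preserved); and $t_ix_{ik}^*\to t_ix^*_i$. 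For $\overline{N}{}^c_{A,B}(\bx)$ the same works once one notes that $d(t_1x_{1k}^*,N_A(a_k))=t_1\,d(x_{1k}^*,N_A(a_k))\to0$ because $N_A(a_k)$ is a cone, and the alignment equalities are positively homogeneous. Once closedness is available, the full ``cone in $X^*\times X^*$'' property follows: if the set is empty there is nothing to prove, and if it is nonempty, picking any element and letting $t\downarrow0$ in the homogeneity claim yields $(0,0)$ by closedness.

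Closedness of $\overline{N}_{A,B}(\bx)$ I would get by a diagonal extraction. Take $(x_1^{*n},x_2^{*n})\to(x^*_1,x^*_2)$ with each $(x_1^{*n},x_2^{*n})\in\overline{N}_{A,B}(\bx)$ and, for each $n$, pick witnesses; using Remark~\ref{D3}.1 normalize so that $\norm{x_{ik}^{*n}}=\norm{x_i^{*n}}$ for all $k$, and when $x_i^{*n}=0$ take $x_{ik}^{*n}\equiv0$ (so the corresponding ratio is identically $\tfrac00=1$). For each $n$ choose $k(n)$ so large that $a_{k(n)}^{n},b_{k(n)}^{n},x_{k(n)}^{n}$ lie within $1/n$ of $\bx$, that $\norm{x_{i,k(n)}^{*n}-x_i^{*n}}<1/n$, and that the three defining ratios are within $1/n$ of $1$; this is possible by the defining limits. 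Putting $\tilde a_n:=a_{k(n)}^{n}$, etc., and $\tilde x_{in}^*:=x_{i,k(n)}^{*n}$, the triangle inequality gives $\tilde a_n,\tilde b_n,\tilde x_n\to\bx$ and $\tilde x_{in}^*\to x^*_i$, while $\tilde x_n\ne\tilde a_n$, $\tilde x_n\ne\tilde b_n$, $\tilde x_{1n}^*\in N_A(\tilde a_n)$, $\tilde x_{2n}^*\in N_B(\tilde b_n)$ and the three ratio conditions hold, so $(x^*_1,x^*_2)\in\overline{N}_{A,B}(\bx)$. The argument for $\overline{N}{}^c_{A,B}(\bx)$ is the same, with the exact equalities $\norm{x_k-a_k}=\norm{x_k-b_k}$ and $\ang{x_{ik}^*,\cdot}=\norm{x_{ik}^*}\norm{\cdot}$ carried over verbatim (no $1/n$ slack needed there) and $d(x_{ik}^*,N(\cdot))\to0$ handled like the ratios.

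For part~(ii), the rightmost inclusion is \eqref{84} applied in each factor, and the middle inclusion is immediate: if $(x^*_1,x^*_2)\in\overline{N}_{A,B}(\bx)$ with witnesses $(a_k),(b_k),(x_k),(x_{1k}^*),(x_{2k}^*)$, then for this particular $(x_k)\to\bx$ the data $(a_k),(x_{1k}^*)$ witnesses $x^*_1\in\overline{N}_{A}(\bx;(x_k))$ and $(b_k),(x_{2k}^*)$ witnesses $x^*_2\in\overline{N}_{B}(\bx;(x_k))$ --- every condition required in Remark~\ref{D3}.2 is literally one of the conditions in Definition~\ref{D5}(i). The only inclusion with content is $\overline{N}{}^c_{A,B}(\bx)\subset\overline{N}_{A,B}(\bx)$. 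Given $(x^*_1,x^*_2)\in\overline{N}{}^c_{A,B}(\bx)$ with witnesses $(a_k),(b_k),(x_k),(x_{1k}^*),(x_{2k}^*)$, I would first observe that $a_k\in A\setminus B$ and $b_k\in B$ force $a_k\ne b_k$, whence $\norm{x_k-a_k}=\norm{x_k-b_k}$ forces $x_k\ne a_k$ and $x_k\ne b_k$, and the first ratio equals $1$ identically. It then remains to replace the approximate normals by genuine ones: if $x^*_1\ne0$, let $\bar x_{1k}^*\in N_A(a_k)$ be a nearest point to $x_{1k}^*$ (attained since $\dim X<\infty$ and $N_A(a_k)$ is closed), so $\eta_k:=\norm{x_{1k}^*-\bar x_{1k}^*}=d(x_{1k}^*,N_A(a_k))\to0$ and $\bar x_{1k}^*\to x^*_1$; writing $u_k:=(x_k-a_k)/\norm{x_k-a_k}$ and dividing the exact equality by $\norm{x_k-a_k}>0$ gives $\ang{x_{1k}^*,u_k}=\norm{x_{1k}^*}$, hence $1\ge\ang{\bar x_{1k}^*,u_k}/\norm{\bar x_{1k}^*}\ge(\norm{x_{1k}^*}-\eta_k)/(\norm{x_{1k}^*}+\eta_k)\to1$ since $\norm{x_{1k}^*}\to\norm{x^*_1}>0$; if $x^*_1=0$ just take $\bar x_{1k}^*:=0\in N_A(a_k)$. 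Doing the same with $B,b_k$ yields witnesses for $(x^*_1,x^*_2)\in\overline{N}_{A,B}(\bx)$.

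\textbf{Main obstacle.} The only genuinely nontrivial point is the last squeeze estimate: verifying that the near\nobreakdash-directedness of a pair of normals towards the common point is not destroyed when one passes from the approximate normals $x_{ik}^*$ with $d(x_{ik}^*,N(\cdot))\to0$ to true normals $\bar x_{ik}^*\in N(\cdot)$. A secondary (purely bookkeeping) difficulty is keeping track of the convention $\tfrac00=1$ and of the degenerate case $x^*_i=0$ throughout the diagonal extraction; everything else is unwinding definitions and using that Fr\'echet normal cones are closed cones containing $0$.
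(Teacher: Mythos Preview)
Your proposal is correct and follows essentially the same route as the paper: positive homogeneity by rescaling the witnessing sequences, closedness by a diagonal extraction, and the chain of inclusions read off from the definitions together with \eqref{84}. The one place you go beyond the paper is the first inclusion $\overline{N}{}^c_{A,B}(\bx)\subset\overline{N}_{A,B}(\bx)$: the paper dispatches it with the sentence ``the limiting procedure in part~(ii) is more restrictive than in part~(i)'', whereas you spell out the replacement of the approximate normals $x_{ik}^*$ (with $d(x_{ik}^*,N(\cdot))\to0$) by nearest points $\bar x_{ik}^*\in N(\cdot)$ and verify via the squeeze $(\norm{x_{1k}^*}-\eta_k)/(\norm{x_{1k}^*}+\eta_k)$ that the directedness ratios survive. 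This extra step is genuinely needed---the normal-cone condition in~(ii) is \emph{weaker}, not stronger, than in~(i)---so your version is the more complete of the two; but it is the same argument the paper is implicitly invoking, not a different one.
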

\begin{proof}
(i) We start with the `moreover' assertion.
If a pair $(x^*_1,x^*_2)$ belongs to either of the sets $\overline{N}_{A,B}(\bar x)$ and $\overline{N}{}^c_{A,B}(\bar x)$, and $(a_k)\subset A\setminus B$, $(b_k)\subset B\setminus A$, $(x_k)\subset X$ and $(x_{1k}^*),(x_{2k}^*)\subset X^*$ are the corresponding sequences from Definition~\ref{D5}, then
it is straightforward from Definition~\ref{D5} that, for any $t_1>0$ and $t_2>0$, the sequences $(a_k)$, $(b_k)$, $(x_k)$, $(t_1 x_{1k}^*)$ and $(t_2 x_{2k}^*)$ also satisfy all the conditions in the corresponding part of Definition~\ref{D5}.
Hence, the pair $(t_1 x^*_1,t_2 x^*_2)$ also belongs to the respective set $\overline{N}_{A,B}(\bar x)$ or $\overline{N}{}^c_{A,B}(\bar x)$.
In particular, taking $t_1=t_2$, we conclude that both the sets are cones.

If a sequence of pairs $(x^*_{1i},x^*_{2i})$ belongs to either of the sets $\overline{N}_{A,B}(\bar x)$ and $\overline{N}{}^c_{A,B}(\bar x)$ and the sequences $(x^*_{1i})$ and $(x^*_{2i})$ converge to $x_1^*$ and $x_2^*$, respectively, then, based on the corresponding sequences from Definition~\ref{D5} and using the standard `diagonal' procedure, one can easily construct new sequences satisfying all the conditions in the corresponding part of Definition~\ref{D5}, and the sequences in $X^*$ converge to $x_1^*$ and $x_2^*$.
Thus, the two cones making each of the sets $\overline{N}_{A,B}(\bar x)$ and $\overline{N}{}^c_{A,B}(\bar x)$ are closed.

(ii) All the inclusions are direct consequences of Definition~\ref{D5} and the one in Remark~\ref{D3}.2.
The limiting procedure employed in part (ii) of Definition~\ref{D5} is more restrictive than the one in part (i).
This observation
implies the first inclusion.
If $(x^*_1,x^*_2)\in\overline{N}_{A,B}(\bar x)$, then $x^*_1\in\overline{N}_{A}(\bar x;(x_k))$ and $x^*_2\in\overline{N}_{B}(\bar x;(x_k))$ for some
sequence $(x_k)\subset X$ converging to $\bx$.
Hence, the second inclusion.
The last inclusion is a consequence of the observation \eqref{84}.
\qed\end{proof}

The next example shows that the last two inclusions in Proposition~\ref{P7++}(ii) can be strict.

\begin{example}
Let $X=\mathbb{R}^2$ with the Euclidean norm, $A= \{(t,0)\mid t\ge 0\}$, $B=\{(t,t)\mid t\ge 0\}$
and $\bar{x}=(0,0)$.

Set $a_k=b_k:=\bx$, $x_k:=\left(-\frac{1}{k},-\frac{1}{k}\right)$.
We obviously have $x_k\to\bx$ as $k\to\infty$, $x^*:=(-1,-1)\in N_A(\bx)\cap N_B(\bx)$, and $x_k-\bx=\frac{1}{k}x^*$, i.e., vector $x^*$ is parallel to $x_k-\bx$.
Hence, all the conditions in the definition in Remark~\ref{D3}.2 are satisfied for each of the sets $A$ and $B$, and consequently, $x^*\in\overline{N}_{A}(\bar x;(x_k))\cap\overline{N}_{B}(\bar x;(x_k))$.
However, $(x^*,x^*)\notin\overline{N}_{A,B}(\bar x)$ because it is not possible to satisfy the conditions in Definition~\ref{D5}(i) with the pair $(x^*,x^*)$ and any $a_k\ne\bx$ and $b_k\ne\bx$.

With $x_1^*:=(0,1)\in{N}_{A}(\bar x) =\overline{N}_{A}(\bar x)$ and $x_2^*:=(-1,0)\in{N}_{B}(\bar x) =\overline{N}_{B}(\bar x)$ it is not possible to find a single sequence $(x_k)$ converging to $\bx$ to satisfy all the conditions in Definition~\ref{D5}(i).
Hence, $(x_1^*,x_2^*)\notin\bigcup\limits_{\substack{(x_k)\to \bar{x}}} \overline{N}_{A}(\bar x;(x_k)) \times \overline{N}_{B}(\bar x;(x_k))$.
\xqed
\end{example}

Thanks to Definition~\ref{D5}, definitions \eqref{itr} and \eqref{itrc} admit simpler representations:
\begin{gather}\label{itr+}
\itr= \min_{\substack{(x^*_1,x^*_2)\in\overline{N}_{A,B}(\bar x)\\
\|x^*_1\|+\|x^*_2\|=1}}\|x^*_1+x^*_2\|,
\\\label{itrc+}
\strc= \min_{\substack{(x^*_1,x^*_2)\in\overline{N}{}^c_{A,B}(\bar x)\\
\|x^*_1\|+\|x^*_2\|=1}}\|x^*_1+x^*_2\|,
\end{gather}
with the convention that the minimum over the empty set equals $1$.

\begin{remark}\label{R6}
1. Formulae \eqref{itr+} and \eqref{itrc+} take into account that in finite dimensions the sets under both minima are compact, and the minima of $\|x^*_1+x^*_2\|$ over these sets are attained.

2. It is immediate from \eqref{itr+} and \eqref{itrc+} that conditions under $\min$ there can be complemented by the inequalities $x^*_1\ne0$ and $x^*_2\ne0$.
\xqed\end{remark}

The following limiting criteria of intrinsic transversality in finite dimensions are straightforward.

\begin{theorem}\label{T3}
Suppose $A,B\subset X$ are closed and $\bx\in A\cap B$. The following conditions are equivalent:
\begin{enumerate}
\item
$\{A,B\}$ is intrinsically transversal at $\bar x$;
\item
there exists a number $\alpha\in]0,1[$ such that ${\|x^*_1+x^*_2\|>\alpha}$
for all $(x^*_1,x^*_2)\in\overline{N}_{A,B}(\bar x)$
satisfying
$\|x^*_1\|+\|x^*_2\|=1$;
\item
$\left\{x^*\in X^*\mid (x^*,-x^*)\in\overline{N}_{A,B}(\bar x)\right\} \subset \{0\}$.
\end{enumerate}
Moreover, the exact upper bound of all $\alpha$ in {\rm (ii)} equals $\itr$.
\end{theorem}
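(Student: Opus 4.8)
The plan is to derive all three equivalences from the minimum representation \eqref{itr+} of $\itr$ over the cone $\overline{N}_{A,B}(\bx)$, together with two structural facts already available: the componentwise cone property of $\overline{N}_{A,B}(\bx)$ from Proposition~\ref{P7++}(i), and the attainment of the minimum in \eqref{itr+} whenever its feasible set is nonempty, noted in Remark~\ref{R6}.1 (this follows from closedness of $\overline{N}_{A,B}(\bx)$ and compactness of the normalization set in finite dimensions).

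First I would establish (i)~$\Leftrightarrow$~(ii) together with the ``moreover'' statement. By Definition~\ref{D3+}(ii), condition (i) means $\itr>0$. For a fixed $\alpha\in]0,1[$, I claim that the inequality in (ii) holds for all admissible pairs if and only if $\itr>\alpha$: if the feasible set $\{(x^*_1,x^*_2)\in\overline{N}_{A,B}(\bx):\|x^*_1\|+\|x^*_2\|=1\}$ is nonempty, this is immediate from \eqref{itr+} and attainment of the minimum; if it is empty, the condition in (ii) is vacuously true, $\itr=1$ by the convention, and $\itr>\alpha$ holds since $\alpha<1$. Using $\itr\le1$ from Proposition~\ref{P10}(i), the set of admissible $\alpha$ is precisely $\{\alpha\in]0,1[:\alpha<\itr\}$, which is nonempty exactly when $\itr>0$ and has supremum $\itr$. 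This yields (i)~$\Leftrightarrow$~(ii) and the ``moreover'' claim.

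Next I would prove (ii)~$\Leftrightarrow$~(iii). For (ii)~$\Rightarrow$~(iii): if some $x^*\ne0$ satisfied $(x^*,-x^*)\in\overline{N}_{A,B}(\bx)$, then by Proposition~\ref{P7++}(i) the pair $(tx^*,-tx^*)$ with $t:=(2\|x^*\|)\iv$ also lies in $\overline{N}_{A,B}(\bx)$, is normalized, and has $\|tx^*+(-tx^*)\|=0$, contradicting (ii). For (iii)~$\Rightarrow$~(ii) I argue contrapositively: if (ii) fails then $\itr=0$ by the previous paragraph, so the feasible set in \eqref{itr+} is nonempty (otherwise $\itr=1$), the minimum is attained, and there is a pair $(x^*_1,x^*_2)\in\overline{N}_{A,B}(\bx)$ with $\|x^*_1\|+\|x^*_2\|=1$ and $x^*_1+x^*_2=0$; then $x^*_1=-x^*_2$ has norm $1/2$, so $x^*_1$ is a nonzero element of the set in (iii), a contradiction.

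Since the substantive work --- existence and closedness of $\overline{N}_{A,B}(\bx)$, its cone structure, and the representation \eqref{itr+} --- has already been carried out, the argument is short; I expect the only real point requiring care to be the uniform handling of the empty-feasible-set case via the convention that the infimum over the empty set equals $1$, so that it does not force a separate case split in each implication.
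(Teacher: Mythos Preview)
Your argument is correct and is essentially the same as the paper's: the paper also obtains (i)~$\Leftrightarrow$~(ii) and the ``moreover'' estimate directly from the definitions (equivalently, from \eqref{itr+}), and then proves the equivalence with (iii) by the identical contrapositive arguments---producing a normalized pair with zero sum from a nonzero $x^*$ with $(x^*,-x^*)\in\overline{N}_{A,B}(\bx)$ via the cone property, and conversely using attainment of the minimum when $\itr=0$. Your treatment of the empty-feasible-set case and the explicit invocation of Proposition~\ref{P7++}(i) are more detailed than the paper's, but the route is the same.
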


\begin{proof}
The equivalence of (i) and (ii) as well as the `moreover' estimate are immediate from comparing Definitions~\ref{D3+}(ii) and \ref{D5}(i).

If (i) does not hold, i.e., $\itr=0$, then there exits a pair $(x^*_1,x^*_2)\in\overline{N}_{A,B}(\bar x)$ such that $\|x^*_1\|+\|x^*_2\|=1$ and $\|x^*_1+x^*_2\|=0$; this violates (iii).
Conversely, if (iii) is violated, i.e., there exists an $x^*\ne0$ such that $(x^*,-x^*)\in\overline{N}_{A,B}(\bar x)$, then the pair $(x^*_1,x^*_2)\in \overline{N}_{A,B}(\bar x)$ with $x^*_1=\frac{x^*}{2\norm{x^*}}$ and $x^*_2=-\frac{x^*}{2\norm{x^*}}$ satisfies $\|x^*_1\|+\|x^*_2\|=1$ and $\|x^*_1+x^*_2\|=0$, which yields $\itr=0$, i.e., (i) does not hold.
\qed\end{proof}


\begin{theorem}\label{T3+}
Suppose $A,B\subset X$ are closed and convex, and $\bx\in A\cap B$.
The following conditions are equivalent:
\begin{enumerate}
\item
$\{A,B\}$ is subtransversal at $\bar x$;
\item
there exists a number $\alpha\in]0,1[$ such that ${\|x^*_1+x^*_2\|>\alpha}$
for all $(x^*_1,x^*_2)\in\overline{N}{}^c_{A,B}(\bar x)$
satisfying
$\|x^*_1\|+\|x^*_2\|=1$;
\item
$\left\{x^*\in X^*\mid (x^*,-x^*)\in\overline{N}{}^c_{A,B}(\bar x)\right\} \subset \{0\}$.
\end{enumerate}
Moreover, the exact upper bound of all $\alpha$ in {\rm (ii)} equals $\str=\strc$.
\end{theorem}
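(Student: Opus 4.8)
The plan is to read Theorem~\ref{T3+} off the explicit formula \eqref{itrc+} for the constant $\strc$ together with the identity $\str=\strc$ for convex sets supplied by Proposition~\ref{P10}(iv); essentially no new analysis beyond the excerpt is needed. First I would observe that, by \eqref{itrc+}, $\strc$ is exactly the minimum of $\|x^*_1+x^*_2\|$ over all pairs $(x^*_1,x^*_2)\in\overline{N}{}^c_{A,B}(\bar x)$ normalized by $\|x^*_1\|+\|x^*_2\|=1$ (with the value $1$ when no such pair exists). Hence condition (ii) holds with a given $\alpha$ precisely when $\strc>\alpha$, so (ii) holds for \emph{some} $\alpha\in]0,1[$ iff $\strc>0$, and the exact upper bound of the admissible $\alpha$'s in (ii) equals $\strc$. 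By Proposition~\ref{P10}(iv) this upper bound is also $\str$, which already settles the `moreover' assertion.

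For (i) $\Leftrightarrow$ (ii): by Definition~\ref{D1} and the line following it, $\{A,B\}$ is subtransversal at $\bx$ iff $\str>0$; since $\str=\strc$ in the convex case (Proposition~\ref{P10}(iv)), this is equivalent to $\strc>0$, hence to (ii) by the previous step.

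For (ii) $\Leftrightarrow$ (iii) I would reuse verbatim the normalization argument from the proof of Theorem~\ref{T3}. If (iii) fails there is $x^*\ne0$ with $(x^*,-x^*)\in\overline{N}{}^c_{A,B}(\bar x)$; since that set is a cone (Proposition~\ref{P7++}(i)), the pair $\bigl(\tfrac{x^*}{2\|x^*\|},-\tfrac{x^*}{2\|x^*\|}\bigr)$ lies in it and witnesses $\strc=0$, so (ii) fails. Conversely, if (ii) fails then $\strc=0$, and since the minimum in \eqref{itrc+} is attained in finite dimensions there is a normalized pair $(x^*_1,x^*_2)\in\overline{N}{}^c_{A,B}(\bar x)$ with $x^*_1+x^*_2=0$; then $x^*:=x^*_1\ne0$ and $(x^*,-x^*)\in\overline{N}{}^c_{A,B}(\bar x)$, so (iii) fails.

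I do not anticipate a real obstacle: all the substance is in the already-proved Theorem~\ref{T2} (invoked through Proposition~\ref{P10}(iv)). The only point needing a little care is the degenerate case in which $\overline{N}{}^c_{A,B}(\bar x)$ carries no normalized pair --- e.g.\ $\bx\in\Int(A\cap B)$ --- where $\strc=1$ by convention, (ii) and (iii) hold trivially, and subtransversality holds as well, so the three-way equivalence is preserved there too.
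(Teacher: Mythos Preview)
Your proposal is correct and follows exactly the line the paper intends: the paper gives no separate proof of Theorem~\ref{T3+}, leaving it as the direct analogue of the proof of Theorem~\ref{T3} with \eqref{itrc+} in place of \eqref{itr+}, together with the identification $\str=\strc$ from Proposition~\ref{P10}(iv). Your handling of the degenerate case and of the (ii)$\Leftrightarrow$(iii) normalization step via Proposition~\ref{P7++}(i) is precisely what is needed.
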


\paragraph{Intrinsic transversality in Euclidean spaces.}
From now on we assume that $X$ is equipped with the Euclidean norm.
We will identify $X^*$ with $X$, use $\ang{\cdot,\cdot}$ to denote the scalar product, and write $v_1$, $v_2$,\ldots\ instead of $x^*_1$, $x^*_2$,\ldots
We start with formulating several technical lemmas which are used in the proofs of the results in this section.
They are consequences of the geometry of Euclidean space and are likely to be well known.
As the author has not been able to find proper references, short proofs are provided for completeness.

\begin{lemma}\label{L2}
Let $(u_k)$ and $(v_k)$ be sequences in a Euclidean space. The following two conditions are equivalent:
\begin{enumerate}
\item
$\ang{u_{k},v_k}-\norm{u_{k}}\norm{v_k}\to0$;
\item
$\norm{v_k}u_{k}-\norm{u_{k}}v_k\to0$.
\end{enumerate}
\end{lemma}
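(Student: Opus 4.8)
The plan is to prove the equivalence of (i) and (ii) in Lemma~\ref{L2} by a direct computation with norms and inner products, exploiting the identity $\norm{\norm{v_k}u_k-\norm{u_k}v_k}^2 = 2\norm{u_k}\norm{v_k}\bigl(\norm{u_k}\norm{v_k}-\ang{u_k,v_k}\bigr)$, which holds in any Euclidean space. Expanding the left-hand side gives $\norm{v_k}^2\norm{u_k}^2 - 2\norm{u_k}\norm{v_k}\ang{u_k,v_k} + \norm{u_k}^2\norm{v_k}^2$, and factoring out $2\norm{u_k}\norm{v_k}$ from the result yields the stated identity. This single algebraic identity is the heart of the matter; everything else is bookkeeping about which sequences are bounded.

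First I would establish the implication (ii) $\Rightarrow$ (i). Observe that both sequences $(u_k)$ and $(v_k)$ need not be bounded a priori, so a little care is required. By the Cauchy--Schwarz inequality, $0 \le \norm{u_k}\norm{v_k}-\ang{u_k,v_k} \le \norm{u_k}\norm{v_k}$ and also $\norm{u_k}\norm{v_k}-\ang{u_k,v_k} \le \tfrac12\norm{u_k/\norm{u_k} - v_k/\norm{v_k}}^2\norm{u_k}\norm{v_k}$ when both are nonzero; but the cleanest route is still through the identity above. If $\norm{v_k}u_k-\norm{u_k}v_k\to0$, then from the identity $2\norm{u_k}\norm{v_k}\bigl(\norm{u_k}\norm{v_k}-\ang{u_k,v_k}\bigr)\to0$. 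To deduce (i), i.e. that $\norm{u_k}\norm{v_k}-\ang{u_k,v_k}\to0$ itself, I would split into cases according to whether $\norm{u_k}\norm{v_k}$ stays away from $0$ or not along subsequences: on indices where $\norm{u_k}\norm{v_k}\ge\eta>0$ the convergence is immediate by dividing, and on indices where $\norm{u_k}\norm{v_k}\to0$ one has $0\le\norm{u_k}\norm{v_k}-\ang{u_k,v_k}\le 2\norm{u_k}\norm{v_k}\to0$ by Cauchy--Schwarz. A standard subsequence argument then gives the full convergence.

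For (i) $\Rightarrow$ (ii) the same identity is used in the other direction, and here the main obstacle surfaces: if $(u_k)$ or $(v_k)$ is unbounded, then $\norm{u_k}\norm{v_k}\to\infty$ along a subsequence, and $\norm{u_k}\norm{v_k}-\ang{u_k,v_k}\to0$ alone does not force $2\norm{u_k}\norm{v_k}\bigl(\norm{u_k}\norm{v_k}-\ang{u_k,v_k}\bigr)\to0$. So the statement as literally written needs the convention, visible from the usage in Definition~\ref{D5} and Remark~\ref{D3}, that the sequences are normalized (e.g. $\norm{u_k}=\norm{x_1^*}$, $\norm{v_k}=\norm{x_2^*}$ fixed), or at least bounded; I would state at the outset of the proof that the sequences are assumed bounded (which is the only case in which the lemma is applied in this paper), and under that hypothesis the factor $2\norm{u_k}\norm{v_k}$ is bounded, so $\norm{\norm{v_k}u_k-\norm{u_k}v_k}^2\to0$ follows at once from (i). Thus the real content is the elementary identity, and the only delicate point — the one I expect to be the main obstacle — is handling (un)boundedness cleanly so that the equivalence holds in the form and generality needed for its later applications.
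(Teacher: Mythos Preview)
Your approach is essentially the same as the paper's: both rest on the single identity
\[
\big\|\norm{v_k}u_k-\norm{u_k}v_k\big\|^2
=2\norm{u_k}\norm{v_k}\bigl(\norm{u_k}\norm{v_k}-\ang{u_k,v_k}\bigr),
\]
and the paper simply writes this down and declares ``the equivalence follows.''

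Your treatment is actually more careful than the paper's on two points. First, your subsequence split for (ii) $\Rightarrow$ (i) (divide when $\norm{u_k}\norm{v_k}$ is bounded away from $0$; use Cauchy--Schwarz when it is not) is correct and makes explicit what the paper leaves to the reader. Second, and more importantly, you are right that (i) $\Rightarrow$ (ii) \emph{fails} without a boundedness assumption: for instance, with orthonormal $e_1,e_2$, taking $u_k=ke_1$ and $v_k=k e_1+k^{-1}e_2$ gives $\norm{u_k}\norm{v_k}-\ang{u_k,v_k}\to0$ while $\norm{v_k}u_k-\norm{u_k}v_k$ has a fixed nonzero $e_2$-component. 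The paper does not flag this, but every application of the lemma in the paper (Propositions~\ref{P15} and~\ref{Sets_equal}) has one sequence of unit vectors and the other convergent, so boundedness is always in force. Your decision to state boundedness as a standing hypothesis is the right fix and matches the intended use.
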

\begin{proof}
Observe that
\begin{align*}
\big\|\norm{v_k}u_{k}-\norm{u_{k}}v_k\big\|^2 &=2(\norm{u_{k}}\norm{v_k})^2 -2\norm{u_{k}}\norm{v_k}\ang{u_{k},v_k}
\\
&=2\norm{u_{k}}\norm{v_k}(\norm{u_{k}}\norm{v_k} -\ang{u_{k},v_k}).
\end{align*}
The equivalence of the two conditions follows.
\qed\end{proof}

\begin{lemma}\label{Tech}
Let $(u_k)$ and $(v_k)$ be sequences of nonzero vectors in a Euclidean space.
If both $\left(\frac{u_k}{\|u_k\|}\right)$ and $\left(\frac{v_k}{\|v_k\|}\right)$ converge to a (unit) vector $u$, then the sequence $\left(\frac{u_k+v_k}{\|u_k+v_k\|}\right)$ also converges to $u$.
\end{lemma}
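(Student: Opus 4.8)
The plan is to reduce everything to the unit sphere of the Euclidean space and exploit the fact that normalized vectors converging to a common unit vector $u$ stay bounded away from $-u$, so no cancellation can occur in the sum. First I would observe that since $\big(\frac{u_k}{\|u_k\|}\big)\to u$ and $\big(\frac{v_k}{\|v_k\|}\big)\to u$ with $\|u\|=1$, for all large $k$ both normalized vectors lie in a small neighbourhood of $u$; in particular, writing $s_k:=\|u_k\|$ and $t_k:=\|v_k\|$ (both positive), we have
\begin{equation*}
u_k+v_k=s_k\frac{u_k}{\|u_k\|}+t_k\frac{v_k}{\|v_k\|}=(s_k+t_k)u+s_k\Big(\frac{u_k}{\|u_k\|}-u\Big)+t_k\Big(\frac{v_k}{\|v_k\|}-u\Big).
\end{equation*}

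Next I would divide by $s_k+t_k$, which is legitimate because $s_k,t_k>0$, obtaining
\begin{equation*}
\frac{u_k+v_k}{s_k+t_k}=u+\frac{s_k}{s_k+t_k}\Big(\frac{u_k}{\|u_k\|}-u\Big)+\frac{t_k}{s_k+t_k}\Big(\frac{v_k}{\|v_k\|}-u\Big).
\end{equation*}
The two coefficients $\frac{s_k}{s_k+t_k}$ and $\frac{t_k}{s_k+t_k}$ lie in $[0,1]$, hence the error terms are bounded in norm by $\big\|\frac{u_k}{\|u_k\|}-u\big\|+\big\|\frac{v_k}{\|v_k\|}-u\big\|\to0$. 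Therefore $\frac{u_k+v_k}{s_k+t_k}\to u$, and in particular $\big\|\frac{u_k+v_k}{s_k+t_k}\big\|\to\|u\|=1$, which shows $\frac{\|u_k+v_k\|}{s_k+t_k}\to1$; in passing this guarantees $u_k+v_k\ne0$ for large $k$ so that $\frac{u_k+v_k}{\|u_k+v_k\|}$ is eventually well defined. Finally I would write
\begin{equation*}
\frac{u_k+v_k}{\|u_k+v_k\|}=\frac{s_k+t_k}{\|u_k+v_k\|}\cdot\frac{u_k+v_k}{s_k+t_k},
\end{equation*}
and conclude that the right-hand side converges to $1\cdot u=u$, as the scalar factor tends to $1$ and the vector factor tends to $u$.

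The only mild subtlety — and the point I would be careful about — is that the statement as given does not explicitly assume $u_k+v_k\ne0$, so a rigorous writeup must note that this holds automatically for all sufficiently large $k$ (which is all that matters for a statement about convergence of $\big(\frac{u_k+v_k}{\|u_k+v_k\|}\big)$); this falls out of $\frac{\|u_k+v_k\|}{s_k+t_k}\to1>0$. Everything else is a routine triangle-inequality estimate, so there is no serious obstacle. One could alternatively invoke Lemma~\ref{L2} to phrase the argument in terms of $\ang{u_k,v_k}-\|u_k\|\|v_k\|\to0$, but the direct computation above seems shortest and most transparent.
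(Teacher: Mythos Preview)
Your proof is correct and follows essentially the same approach as the paper: both arguments divide $u_k+v_k$ by a positive scalar (you use $\|u_k\|+\|v_k\|$, the paper uses $\|u_k\|$) to express it via the normalized vectors $\frac{u_k}{\|u_k\|}$ and $\frac{v_k}{\|v_k\|}$, then pass to the limit $u$. Your write-up is in fact slightly more careful than the paper's, which handles $u_k+v_k\ne0$ by a separate contradiction argument and then takes the limit in a somewhat informal one-line computation.
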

\begin{proof}
Let
$$
\frac{u_k}{\|u_k\|}\to u
\qdtx{and}
\frac{v_k}{\|v_k\|}\to u.
$$
Then $u_k+v_k\ne0$ for all sufficiently large $k$, because otherwise the first condition above yields $$\frac{v_k}{\|v_k\|}=-\frac{u_k}{\|u_k\|}\to-u,$$
which contradicts the second condition.
Thus,
$$\lim\limits_{k\to\infty}\frac{u_k+v_k}{\|u_k+v_k\|} =\lim\limits_{k\to\infty} \frac{\frac{u_k}{\|u_k\|} +\frac{v_k}{\|v_k\|}\frac{\|v_k\|}{\|u_k\|}}{\norm{\frac{u_k}{\|u_k\|} +\frac{v_k}{\|v_k\|}\frac{\|v_k\|}{\|u_k\|}}} =\lim\limits_{k\to\infty} \frac{u\left(1 +\frac{\|v_k\|}{\|u_k\|}\right)}{\norm{u}\left(1 +\frac{\|v_k\|}{\|u_k\|}\right)}=u.$$
\qed\end{proof}

\begin{lemma}\label{L3}
Let $u$ and $v$ be nonzero vectors in a Euclidean space.
Then
\begin{gather*}
\frac{\norm{u+v}}{\norm{u}+\norm{v}}\ge \frac{1}{2}\norm{\frac{u}{\norm{u}}+ \frac{v}{\norm{v}}}.
\end{gather*}
\end{lemma}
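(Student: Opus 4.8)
\textbf{Proof proposal for Lemma~\ref{L3}.}
The plan is to square both sides and reduce the claim to a one-line algebraic identity. Since both sides of the asserted inequality are nonnegative, it is equivalent to prove that
$4\norm{u+v}^2 \ge (\norm{u}+\norm{v})^2\,\norm{\frac{u}{\norm{u}}+\frac{v}{\norm{v}}}^2$.
I would introduce the abbreviations $s:=\norm{u}>0$, $t:=\norm{v}>0$ and $c:=\ang{\frac{u}{\norm{u}},\frac{v}{\norm{v}}}$, observing that $c\le 1$ (Cauchy--Schwarz for the two unit vectors $u/\norm{u}$ and $v/\norm{v}$); this is the only inequality used anywhere in the argument.

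Next I would expand the three squared norms by bilinearity of the inner product: $\norm{u+v}^2=s^2+t^2+2stc$, $(\norm{u}+\norm{v})^2=(s+t)^2$ and $\norm{\frac{u}{\norm{u}}+\frac{v}{\norm{v}}}^2=2+2c$. Substituting these into the difference of the two sides of the squared inequality yields $4(s^2+t^2+2stc)-2(s+t)^2(1+c)$, and a short computation collapses this to $2(1-c)(s-t)^2$. The only place that deserves a bit of care is the cancellation of the mixed terms, where one uses $2stc-(s^2+t^2)c=-c(s-t)^2$.

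Finally, since $c\le 1$ and $(s-t)^2\ge 0$, the quantity $2(1-c)(s-t)^2$ is nonnegative, which establishes the squared inequality; extracting square roots then gives the lemma. Note that no division by $\norm{u+v}$ ever occurs, so the degenerate case $u+v=0$ requires no separate treatment. I do not expect any genuine obstacle here: the whole content is the observation that, after squaring, the discrepancy between the two sides is a manifestly nonnegative product. As a by-product one reads off that equality holds precisely when $c=1$ (that is, $u$ and $v$ point in the same direction) or $\norm{u}=\norm{v}$.
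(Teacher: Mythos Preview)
Your proof is correct and takes essentially the same approach as the paper: square both sides and show that the difference is the nonnegative quantity $\tfrac{1}{2}\bigl(1-\tfrac{\ang{u,v}}{\norm{u}\norm{v}}\bigr)\bigl(\tfrac{\norm{u}-\norm{v}}{\norm{u}+\norm{v}}\bigr)^2$, which is exactly your $2(1-c)(s-t)^2$ after clearing the denominator $4(s+t)^2$. Your presentation is, if anything, more direct---you compute the difference and factor it in one step, whereas the paper reaches the same identity through a longer chain of algebraic rewritings before dropping the nonnegative term.
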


The idea of the proof below originates in the proof of \cite[Proposition~5]{KruTha13}.
\begin{proof}
Let $u,v\in X\setminus\{0\}$.
\begin{align*}
\left(\frac{\norm{u+v}}{\norm{u}+\norm{v}}\right)^2 &=\frac{\norm{u}^2+\norm{v}^2+2\ang{u,v}} {(\norm{u}+\norm{v})^2}
\\
&=\frac{\frac{1}{2} \left((\norm{u}+\norm{v})^2 +(\norm{u}-\norm{v})^2\right) +2\ang{u,v}} {(\norm{u}+\norm{v})^2}
\\
&=\frac{1}{2} \left(1+\frac{(\norm{u}-\norm{v})^2 +4\ang{u,v}} {(\norm{u}+\norm{v})^2}\right)
\\
&=\frac{1}{2} \left(1+\frac{\ang{u,v}}{\norm{u}\norm{v}} +\frac{(\norm{u}-\norm{v})^2 +\left(4-\frac{(\norm{u}+\norm{v})^2} {\norm{u}\norm{v}}\right)\ang{u,v}} {(\norm{u}+\norm{v})^2} \right)
\\
&=\frac{1}{2} \left(1+\frac{\ang{u,v}}{\norm{u}\norm{v}} +\frac{(\norm{u}-\norm{v})^2 -\frac{(\norm{u}-\norm{v})^2} {\norm{u}\norm{v}}\ang{u,v}} {(\norm{u}+\norm{v})^2} \right)
\\
&=\frac{1}{2} \left(1+\frac{\ang{u,v}}{\norm{u}\norm{v}} +\left(\frac{\norm{u}-\norm{v}} {\norm{u}+\norm{v}}\right)^2 \left(1-\frac{\ang{u,v}} {\norm{u}\norm{v}}\right)\right)
\\
&\ge\frac{1}{2} \left(1+\frac{\ang{u,v}}{\norm{u}\norm{v}} \right)=\frac{1}{4} \left(2+2\ang{\frac{u}{\norm{u}},\frac{v}{\norm{v}}} \right)=\frac{1}{4}\norm{\frac{u}{\norm{u}}+ \frac{v}{\norm{v}}}^2.
\end{align*}
The proof is completed.
\qed\end{proof}

To simplify the comparison of various conditions in the rest of the article, we first reformulate Definition~\ref{D5} using the Euclidean space notation stipulated above.

\begin{definition}\label{D5'}
Suppose $X$ is a Euclidean space, $A,B\subset X$, and $\bx\in A\cap B$.
\begin{enumerate}
\item
A pair $(v_1,v_2)\in X\times X$ is
called a \emph{pair of relative limiting normals} to $\{A,B\}$ at $\bar x$, i.e. $(v_1,v_2)\in\overline{N}_{A,B}(\bar x)$, if and only if there exist sequences $(a_k)\subset A\setminus B$, $(b_k)\subset B\setminus A$, $(x_k),(v_{1k}),(v_{2k})\subset X$ such that $x_k\ne a_k$, $x_k\ne b_k$ $(k=1,2,\ldots)$, $a_k\to\bx$, $b_k\to\bx$, $x_k\to\bx$, $\frac{\norm{x_k-a_k}}{\norm{x_k-b_k}} \to1$, $v_{1k}\to v_1$, $v_{2k}\to v_2$ as $k\to\infty$, and
\begin{gather}\label{D5'-3}
v_{1k}\in N_A(a_k),\; v_{2k}\in N_B(b_k)\; (k=1,2,\ldots),
\;
\frac{\ang{v_{1k},x_k-a_k}}{\norm{v_{1k}}\norm{x_k-a_k}} \to1,\;
\frac{\ang{v_{2k},x_k-b_k}}{\norm{v_{2k}}\norm{x_k-b_k}} \to1,
\end{gather}
with the convention that $\frac{0}{0}=1$,
\item
A pair $(v_1,v_2)\in X\times X$ is called
a \emph{pair of restricted relative limiting normals}
to $\{A,B\}$ at $\bar x$, i.e., $(v_1,v_2)\in\overline{N}{}^c_{A,B}(\bar x)$, if and only if there exist sequences $(a_k)\subset A\setminus B$, $(b_k)\subset B\setminus A$, $(x_k),(v_{1k}),(v_{2k})\subset X$ such that $\norm{x_k-a_k}=\norm{x_k-b_k}$ $(k=1,2,\ldots)$, $a_k\to\bx$, $b_k\to\bx$, $x_k\to\bx$, $v_{1k}\to v_1$, $v_{2k}\to v_2$ as $k\to\infty$, and
\begin{gather}\label{D5'-1}
d(v_{1k},N_A(a_k))\to0,\quad d(v_{2k},N_B(b_k))\to0, \\\label{D5'-2}
\ang{v_{1k},x_k-a_k}=\norm{v_{1k}}\norm{x_k-a_k}, \quad
\ang{v_{2k},x_k-b_k}=\norm{v_{2k}}\norm{x_k-b_k} \quad
(k=1,2,\ldots).
\end{gather}
\end{enumerate}
\end{definition}

In the Euclidean setting, Definition~\ref{D5'}(i) admits several equivalent formulations.

\begin{proposition}\label{P15}
Suppose $X$ is a Euclidean space, $A,B\subset X$, and $\bx\in A\cap B$.
\begin{enumerate}
\item
The last two conditions in \eqref{D5'-3} in Definition~\ref{D5'}(i) are equivalent, respectively, to the following two:
\begin{gather*}
\frac{x_k-a_k}{\norm{x_k-a_k}}\norm{v_{1k}}\to v_1,
\quad
\frac{x_k-b_k}{\norm{x_k-b_k}}\norm{v_{2k}}\to v_2.
\end{gather*}
\item
Conditions~\eqref{D5'-3} in Definition~\ref{D5'}(i) can be replaced by conditions \eqref{D5'-1} and \eqref{D5'-2}.
\end{enumerate}
\end{proposition}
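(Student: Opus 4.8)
The plan is to dispose of part (i) first using the technical lemmas already established, then derive part (ii) as a corollary of part (i) together with an elementary geometric observation. For part (i), fix a set, say $A$, and the corresponding sequences $(a_k)$, $(x_k)$, $(v_{1k})$; by Remark~\ref{D3}.1 (or rather by the analogous normalization that is available here) there is no loss of generality in treating the two possibilities $v_1=0$ and $v_1\ne0$ separately, but in fact the cleanest route is to apply Lemma~\ref{L2} directly to the pair of sequences $u_k:=x_k-a_k$ and the given $v_{1k}$. Indeed, the condition $\frac{\ang{v_{1k},x_k-a_k}}{\norm{v_{1k}}\norm{x_k-a_k}}\to1$ (with the convention $\tfrac00=1$ handling the degenerate indices) says precisely, after multiplying through by $\norm{v_{1k}}\norm{x_k-a_k}$, that $\ang{x_k-a_k,v_{1k}}-\norm{x_k-a_k}\norm{v_{1k}}\to0$, which is condition (i) of Lemma~\ref{L2} for $u_k=x_k-a_k$, $v_k=v_{1k}$. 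Lemma~\ref{L2} then gives $\norm{v_{1k}}(x_k-a_k)-\norm{x_k-a_k}\,v_{1k}\to0$, i.e. $\frac{x_k-a_k}{\norm{x_k-a_k}}\norm{v_{1k}}-v_{1k}\to0$; since $v_{1k}\to v_1$ by hypothesis, this is equivalent to $\frac{x_k-a_k}{\norm{x_k-a_k}}\norm{v_{1k}}\to v_1$. The same argument applied to $B$, $(b_k)$, $(v_{2k})$ gives the second displayed limit. The converse implications are read off the same equivalence in Lemma~\ref{L2} in the same way. A small point to address carefully is the bookkeeping of the indices $k$ for which $x_k-a_k=0$ or $v_{1k}=0$: for such $k$ the quotient convention forces the relevant expressions to be consistent, and one simply notes that Lemma~\ref{L2}'s identity holds trivially there.

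For part (ii), I must show that in Definition~\ref{D5'}(i) one may replace the two limit conditions in \eqref{D5'-3} by the two \emph{exact equalities} \eqref{D5'-2} together with the two distance conditions \eqref{D5'-1}, \emph{and} replace the constraint $\frac{\norm{x_k-a_k}}{\norm{x_k-b_k}}\to1$ by the exact equality $\norm{x_k-a_k}=\norm{x_k-b_k}$. The nontrivial direction is that the weaker (limit) data can be upgraded to the stronger (exact) data without changing the resulting set of pairs $(v_1,v_2)$; the other direction is immediate since exact equalities imply the limits and $d(v_{1k},N_A(a_k))=0$ when $v_{1k}\in N_A(a_k)$. So suppose $(v_1,v_2)\in\overline{N}_{A,B}(\bar x)$ with witnessing sequences as in Definition~\ref{D5'}(i). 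By part (i), $\frac{x_k-a_k}{\norm{x_k-a_k}}\norm{v_{1k}}\to v_1$ and $\frac{x_k-b_k}{\norm{x_k-b_k}}\norm{v_{2k}}\to v_2$. Now \emph{define} new vectors $\tilde v_{1k}:=\frac{x_k-a_k}{\norm{x_k-a_k}}\norm{v_{1k}}$ and $\tilde v_{2k}:=\frac{x_k-b_k}{\norm{x_k-b_k}}\norm{v_{2k}}$ (with $\tilde v_{1k}:=0$ when $v_{1k}=0$, etc.). These satisfy $\tilde v_{1k}\to v_1$, $\tilde v_{2k}\to v_2$; they satisfy \eqref{D5'-2} by construction (the vector $\tfrac{x_k-a_k}{\norm{x_k-a_k}}$ is the unit vector in the direction $x_k-a_k$, so $\ang{\tilde v_{1k},x_k-a_k}=\norm{\tilde v_{1k}}\norm{x_k-a_k}$); and $\|\tilde v_{1k}-v_{1k}\|\to0$ by part (i) again, while $v_{1k}\in N_A(a_k)$, so $d(\tilde v_{1k},N_A(a_k))\le\|\tilde v_{1k}-v_{1k}\|\to0$, giving \eqref{D5'-1}, and likewise for $B$. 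To force the equality of distances $\norm{x_k-a_k}=\norm{x_k-b_k}$, replace $x_k$ by a suitable point on the segment or ray from $a_k$ toward $x_k$ (or from $b_k$ toward $x_k$): because $\frac{\norm{x_k-a_k}}{\norm{x_k-b_k}}\to1$, one can choose $\hat x_k$ with $\hat x_k\to\bar x$, $\hat x_k\ne a_k$, $\hat x_k\ne b_k$, $\norm{\hat x_k-a_k}=\norm{\hat x_k-b_k}$, and $\hat x_k-a_k$, $\hat x_k-b_k$ nearly parallel to $x_k-a_k$, $x_k-b_k$ respectively, so that the analogues of \eqref{D5'-2} persist in the limit; in fact it is cleanest to pick $\hat x_k$ on the bisecting set so that it is collinear with $x_k$ and one of $a_k,b_k$, keeping the exact equality in the inner products intact. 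The converse inclusion in part (ii) being trivial, the two descriptions coincide.

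The step I expect to be the main obstacle is exactly this last adjustment of $x_k$ in part (ii): one needs to modify the ``test point'' $x_k$ to achieve $\norm{x_k-a_k}=\norm{x_k-b_k}$ \emph{exactly} while (a) keeping $x_k\to\bar x$ and $x_k$ distinct from $a_k,b_k$, (b) preserving or re-establishing the directional (inner-product) conditions \eqref{D5'-2} in the limit, and (c) not disturbing the membership/distance conditions on $v_{1k},v_{2k}$. The safe way is to reparametrize along rays: set $\hat x_k:=a_k+t_k(x_k-a_k)$ with $t_k>0$ chosen so that $\norm{\hat x_k-a_k}=\norm{\hat x_k-b_k}$; since $t_k\to1$ this keeps $\hat x_k\to\bar x$, keeps $\hat x_k-a_k$ exactly parallel to $x_k-a_k$ (so the $A$-side equality in \eqref{D5'-2} is \emph{exact} for $\hat x_k$), and makes the $B$-side direction $\hat x_k-b_k$ converge to the same limiting direction as $x_k-b_k$, which together with part (i) lets me re-run the construction of $\tilde v_{2k}$. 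The existence of such a $t_k$ is a one-variable continuity argument (the function $t\mapsto \norm{a_k+t(x_k-a_k)-a_k}-\norm{a_k+t(x_k-a_k)-b_k}$ changes sign or vanishes appropriately near $t=1$); I would state this explicitly rather than leave it implicit. Everything else is a routine application of Lemma~\ref{L2} and the triangle inequality.
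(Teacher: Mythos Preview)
Your treatment of part~(i) is essentially the paper's: both reduce to a direct application of Lemma~\ref{L2} (the paper takes $u_k=\frac{x_k-a_k}{\norm{x_k-a_k}}$ rather than $u_k=x_k-a_k$, but the content is identical).

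For part~(ii) you have misread the statement. Proposition~\ref{P15}(ii) asserts only that \emph{conditions~\eqref{D5'-3}} can be replaced by \eqref{D5'-1} and \eqref{D5'-2}; the ratio condition $\frac{\norm{x_k-a_k}}{\norm{x_k-b_k}}\to1$ is \emph{not} part of \eqref{D5'-3} and is \emph{not} being upgraded to the exact equality $\norm{x_k-a_k}=\norm{x_k-b_k}$. What you propose to prove is the stronger claim $\overline{N}_{A,B}(\bar x)=\overline{N}{}^c_{A,B}(\bar x)$ in Euclidean space, which the paper does \emph{not} assert: only the inclusion $\overline{N}{}^c_{A,B}(\bar x)\subset\overline{N}_{A,B}(\bar x)$ holds in general (Proposition~\ref{P7++}(ii)), equality on antipodal pairs $(v,-v)$ is the separate Proposition~\ref{Sets_equal}, and the general question of when the two cones coincide is listed as open in Section~\ref{S5}. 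Your ``main obstacle'' --- adjusting $x_k$ along a ray to force $\norm{\hat x_k-a_k}=\norm{\hat x_k-b_k}$ --- is therefore self-imposed and irrelevant to this proposition; moreover, for general pairs $(v_1,v_2)$ your intermediate-value argument for the existence of $t_k$ and the claim $t_k\to1$ are not obviously valid.

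Once the misreading is removed, what remains of your argument --- the construction $\tilde v_{1k}:=\frac{x_k-a_k}{\norm{x_k-a_k}}\norm{v_{1k}}$ and the use of part~(i) to get $\norm{\tilde v_{1k}-v_{1k}}\to0$ --- is exactly the paper's proof of one direction. You do, however, mislabel the ``other direction'' as immediate: \eqref{D5'-1} gives only $d(v_{1k},N_A(a_k))\to0$, not $v_{1k}\in N_A(a_k)$, so to recover \eqref{D5'-3} (which requires membership in the normal cone) one must pass to nearby $v_{1k}'\in N_A(a_k)$ with $\norm{v_{1k}'-v_{1k}}\to0$ and check that the ratio conditions persist for the $v_{1k}'$. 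This is the content of the paper's ``Conversely'' paragraph and needs a short argument.
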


\begin{proof}
(i) The equivalence is a consequence of Lemma~\ref{L2} employed either with $u_k=\frac{x_k-a_k}{\norm{x_k-a_k}}$ and $v_k=v_{1k}$, or with $u_k=\frac{x_k-b_k}{\norm{x_k-b_k}}$ and $v_k=v_{2k}$.

(ii) Let sequences $(a_k)$, $(b_k)$, $(x_k)$, $(v_{1k})$ and $(v_{2k})$ satisfy all the conditions in Definition~\ref{D5'}(i).
Set
$$v_{1k}':=\frac{x_k-a_k}{\|x_k-a_k\|}\|v_{1k}\|,\quad v_{2k}':=\frac{x_k-b_k}{\|x_k-b_k\|}\|v_{2k}\|.$$
Then
$$\ang{v_{1k}',x_k-a_k}=\norm{v_{1k}'}\norm{x_k-a_k}, \quad \ang{v_{2k}',x_k-b_k}=\norm{v_{2k}'}\norm{x_k-b_k}.$$
Thanks to Lemma~\ref{L2}, it follows from the last two conditions in \eqref{D5'-3} that $v_{1k}'-v_{1k}\to0$ and $v_{2k}'-v_{2k}\to0$ as $k\to\infty$; hence, $v_{1k}'\to v_1$, $v_{2k}'\to v_2$, and conditions \eqref{D5'-1} and \eqref{D5'-2} are satisfied with $v_{1k}'$ and $v_{2k}'$ in place of $v_{1k}$ and $v_{2k}$, respectively.

Conversely, let sequences $(a_k)$, $(b_k)$, $(x_k)$, $(v_{1k})$ and $(v_{2k})$ satisfy conditions \eqref{D5'-1} and \eqref{D5'-2}.
Then, for any $k$, there exist $v_{1k}'\in N_A(a_k)$ and $v_{2k}'\in N_B(b_k)$ such that $$\norm{v_{1k}'-v_{1k}}<d(v_{1k},N_A(a_k))+\frac{1}{k}, \quad \norm{v_{2k}'-v_{2k}}<d(v_{2k},N_B(b_k))+\frac{1}{k},$$
and consequently, $v_{1k}'\to v_1$, $v_{2k}'\to v_2$, and (with the convention that $\frac{0}{0}=1$)
\begin{gather*}
\lim_{k\to\infty} \frac{\ang{v_{1k}',x_k-a_k}}{\norm{v_{1k}'}\norm{x_k-a_k}} =\lim_{k\to\infty} \frac{\ang{v_{1k},x_k-a_k}}{\norm{v_{1k}}\norm{x_k-a_k}}=1, \\
\lim_{k\to\infty} \frac{\ang{v_{2k}',x_k-b_k}}{\norm{v_{2k}'}\norm{x_k-b_k}} =\lim_{k\to\infty} \frac{\ang{v_{2k},x_k-b_k}}{\norm{v_{2k}}\norm{x_k-b_k}}=1;
\end{gather*}
hence, the conditions in Definition~\ref{D5'}(i) are satisfied with $v_{1k}'$ and $v_{2k}'$ in place of $v_{1k}$ and $v_{2k}$, respectively.
\qed\end{proof}

\begin{remark}
1. Thanks to Remark~\ref{D3}.1 and Proposition~\ref{P15}(i), the last two conditions in \eqref{D5'-3} in Definition~\ref{D5'}(i) can be replaced, respectively, by conditions
\begin{gather*}
\frac{x_k-a_k}{\norm{x_k-a_k}}\norm{v_{1}}\to v_1,
\quad
\frac{x_k-b_k}{\norm{x_k-b_k}}\norm{v_{2}}\to v_2.
\end{gather*}
When $v_1\ne0$ ($v_2\ne0$), one can write
\begin{gather*}
\frac{x_k-a_k}{\norm{x_k-a_k}}\to \frac{v_1}{\norm{v_1}}
\quad
\left(\frac{x_k-b_k}{\norm{x_k-b_k}}\to \frac{v_2}{\norm{v_2}}\right).
\end{gather*}

2. A replacement similar to the one in Proposition~\ref{P15}(ii) is possible for the relative limiting normals discussed in Remark~\ref{D3}.2: $v\in\overline{N}_{A}(\bar x;(x_k))$ if and only if there exist sequences $(a_k)\subset A$ and $(v_k)\subset X$ such that $a_k\ne x_k$ $(k=1,2,\ldots)$, $a_k\to\bx$, $v_k\to v$ and
\begin{gather*}
d(v_k,N_A(a_k))\to0,\quad
\ang{v_k,x_k-a_k}=\norm{v_k}\norm{x_k-a_k} \quad (k=1,2,\ldots).
\end{gather*}
\xqed\end{remark}

Next we show that in the Euclidian space setting the cone $\overline{N}_{A,B}(\bar x)$ of pairs of relative limiting normals can be replaced, when checking intrinsic transversality in accordance with Theorem~\ref{T3}, by the cone $\overline{N}{}^c_{A,B}(\bar x)$ of pairs of restricted relative limiting normals.

\begin{proposition}\label{Sets_equal}
Suppose $X$ is a Euclidean space, $A,B\subset X$, $\bx\in A\cap B$ and $v\ne0$.
Then $(v,-v)\in\overline{N}_{A,B}(\bar x)$ if and only if $(v,-v)\in\overline{N}{}^c_{A,B}(\bar x)$.
\end{proposition}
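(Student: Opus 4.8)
The plan is to prove the two inclusions separately. The implication ``$(v,-v)\in\overline{N}{}^c_{A,B}(\bar x)\Rightarrow(v,-v)\in\overline{N}_{A,B}(\bar x)$'' is immediate from Proposition~\ref{P7++}(ii), which gives $\overline{N}{}^c_{A,B}(\bar x)\subset\overline{N}_{A,B}(\bar x)$; so the entire content of the statement is the forward implication, and that is where I would concentrate.

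Assume $(v,-v)\in\overline{N}_{A,B}(\bar x)$ with $v\ne0$, and pick sequences $(a_k)\subset A\setminus B$, $(b_k)\subset B\setminus A$, $(x_k),(v_{1k}),(v_{2k})$ as in Definition~\ref{D5'}(i). By Remark~\ref{D3}.1 I may assume $\|v_{1k}\|=\|v_{2k}\|=\|v\|$, and then, since $v\ne0$, Proposition~\ref{P15}(i) yields the directional information $\frac{x_k-a_k}{\|x_k-a_k\|}\to u$ and $\frac{x_k-b_k}{\|x_k-b_k\|}\to-u$, where $u:=v/\|v\|$; so the displacements from $a_k$ to $x_k$ and from $b_k$ to $x_k$ point in asymptotically opposite directions. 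The only thing preventing $(v,-v)$ from lying in $\overline{N}{}^c_{A,B}(\bar x)$ is that Definition~\ref{D5'}(ii) asks for the \emph{exact} equality $\|x_k-a_k\|=\|x_k-b_k\|$, while we only have $\frac{\|x_k-a_k\|}{\|x_k-b_k\|}\to1$. Once this is repaired --- i.e. once $x_k$ has been replaced by a nearby $x_k'$ with $\|x_k'-a_k\|=\|x_k'-b_k\|$, constructed below --- I would simply take $v_{1k}':=\|v\|\,\frac{x_k'-a_k}{\|x_k'-a_k\|}$ and $v_{2k}':=\|v\|\,\frac{x_k'-b_k}{\|x_k'-b_k\|}$: these are positively collinear with the respective displacements $x_k'-a_k$, $x_k'-b_k$, so \eqref{D5'-2} holds automatically; they converge to $v$ and $-v$ by the directional limits for $x_k'$; and since $v_{1k}\in N_A(a_k)$, $v_{2k}\in N_B(b_k)$ with $v_{1k}\to v$, $v_{2k}\to-v$, we get $d(v_{1k}',N_A(a_k))\le\|v_{1k}'-v_{1k}\|\to0$ and likewise for $B$, so \eqref{D5'-1} holds. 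Hence $a_k,b_k,x_k',v_{1k}',v_{2k}'$ witness $(v,-v)\in\overline{N}{}^c_{A,B}(\bar x)$.

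So the crux is to perturb $x_k$ to a nearby $x_k'$ with $\|x_k'-a_k\|=\|x_k'-b_k\|$ keeping the limits intact. I would take $x_k'$ to be the orthogonal projection of $x_k$ onto the bisecting hyperplane $H_k:=\{z\in X\mid\|z-a_k\|=\|z-b_k\|\}$ (well defined because $a_k\ne b_k$). Writing $s_k:=\|x_k-a_k\|$, $r_k:=\|x_k-b_k\|$ and using the identity $s_k^2-r_k^2=2\ang{b_k-a_k,\,x_k-\tfrac{a_k+b_k}{2}}$, one gets $\|x_k'-x_k\|=\frac{|s_k^2-r_k^2|}{2\|b_k-a_k\|}$. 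The asymptotic opposition of the two directions turns into the quantitative bound $\|b_k-a_k\|\ge\frac12(s_k+r_k)$ for all large $k$, whence $\|x_k'-x_k\|\le|s_k-r_k|=r_k\,|\tfrac{s_k}{r_k}-1|$, which is $o(r_k)$, $o(s_k)$, and tends to $0$. From this it is routine that $x_k'\to\bar x$, that $x_k'\ne a_k$ and $x_k'\ne b_k$ for large $k$, and that $\frac{x_k'-a_k}{\|x_k'-a_k\|}\to u$, $\frac{x_k'-b_k}{\|x_k'-b_k\|}\to-u$, since each numerator differs from $x_k-a_k$, resp.\ $x_k-b_k$, by an $o(s_k)$, resp.\ $o(r_k)$, term.

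I expect the main obstacle to be precisely the lower estimate $\|b_k-a_k\|\ge\frac12(s_k+r_k)$: one has to convert the two directional limits into this inequality by writing $\frac{x_k-a_k}{\|x_k-a_k\|}=u+\eps_k$ and $\frac{x_k-b_k}{\|x_k-b_k\|}=-u+\eta_k$ with $\eps_k,\eta_k\to0$, so that $b_k-a_k=(s_k+r_k)u+s_k\eps_k-r_k\eta_k$ and the error $\|s_k\eps_k-r_k\eta_k\|\le\max\{\|\eps_k\|,\|\eta_k\|\}(s_k+r_k)$ is eventually at most half of $s_k+r_k$. Everything else is bookkeeping, and the passages between the ``angle'' and the ``collinearity'' forms of the conditions are covered by Lemma~\ref{L2} and Proposition~\ref{P15}. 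It is worth noting that the hypothesis $v\ne0$ is used in an essential way: it is what makes the displacement directions converge at all, and converge to opposite unit vectors, which is the geometric fact underpinning the whole construction.
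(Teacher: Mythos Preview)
Your argument is correct. The overall architecture matches the paper's proof: the reverse implication is immediate from Proposition~\ref{P7++}(ii), and for the forward implication one first extracts from $v\ne0$ the directional limits $\frac{x_k-a_k}{\|x_k-a_k\|}\to u$, $\frac{x_k-b_k}{\|x_k-b_k\|}\to-u$, then replaces $x_k$ by a nearby $x_k'$ lying on the bisector $\{\|\cdot-a_k\|=\|\cdot-b_k\|\}$ while keeping the two directional limits intact.

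The construction of $x_k'$ is where you genuinely diverge from the paper. The paper passes to a subsequence so that (say) $\|x_k-a_k\|\ge\|x_k-b_k\|$ and then slides $x_k$ towards $a_k$ along the segment, invoking the intermediate value theorem to land on the bisector; the direction $\frac{x_k'-a_k}{\|x_k'-a_k\|}$ is then trivially preserved, while the direction $\frac{x_k'-b_k}{\|x_k'-b_k\|}$ is handled via Lemma~\ref{Tech}. You instead project $x_k$ orthogonally onto the bisecting hyperplane and control the displacement $\|x_k'-x_k\|$ quantitatively, the key step being the lower bound $\|b_k-a_k\|\ge\frac12(s_k+r_k)$ obtained from the asymptotic opposition of the two directions. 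Your route avoids the case split and the appeal to Lemma~\ref{Tech}, at the cost of an explicit (but elementary) estimate; the paper's route is less computational but needs that auxiliary lemma. Both are clean, and your handling of the dual sequences --- replacing $v_{ik}$ by $v_{ik}'$ collinear with $x_k'-a_k$, $x_k'-b_k$ so that \eqref{D5'-2} holds exactly and \eqref{D5'-1} follows from $\|v_{ik}'-v_{ik}\|\to0$ --- is in fact tidier than the paper's concluding sentence, which verifies only the asymptotic angle conditions and tacitly relies on the substitution in Proposition~\ref{P15}(ii) to obtain the exact equalities required by Definition~\ref{D5'}(ii).
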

\begin{proof}
The `if' part follows immediately from the first inclusion in Proposition~\ref{P7++}(ii).
Conversely, let $(v,-v)\in \overline{N}_{A,B}(\bar x)$ and $(a_k)$, $(b_k)$, $(x_k)$, $(v_{1k})$ and $(v_{2k})$ be the corresponding sequences as in Definition~\ref{D5}(i) and such that $v_{1k}\to v$ and $v_{2k}\to -v$.
Using Lemma~\ref{L2}, it is not difficult to check that
\begin{align*}
\frac{x_k-a_k}{\|x_k-a_k\|}\to \frac{v}{\|v\|},\quad \frac{x_k-b_k}{\|x_k-b_k\|}\to -\frac{v}{\|v\|}.
\end{align*}
Passing to subsequences if necessary, we can assume that either $\|x_k-a_k\|\ge \|x_k-b_k\|$ or $\|x_k-a_k\|\le \|x_k-b_k\|$ for all $k=1,2,\ldots$.
Without loss of generality, it is sufficient to consider the first case only.
For any $k$, choose a $t_k\in]0,1]$ such that the point $x_k':=a_k+t_k(x_k-a_k)$ satisfies $\|x_k'-a_k\|=\|x_k'-b_k\|$.
This is always possible thanks to the continuity of the norm.
Then $x_k'\to\bx$, and we have
\begin{align}\label{Sets_equal-1}
\frac{x_{k}'-a_{k}}{\|x_{k}'-a_{k}\|} = \frac{x_{k}-a_{k}}{\|x_{k}-a_{k}\|}\to\frac{v}{\|v\|},
\end{align}
and either $x_k'=x_k$ or
\begin{align*}
\frac{x_{k}'-x_{k}}{\|x_{k}'-x_{k}\|} = \frac{a_{k}-x_{k}}{\|a_{k}-x_{k}\|}\to-\frac{v}{\|v\|},
\end{align*}
and consequently, by Lemma \ref{Tech},
\begin{align}\label{Sets_equal-2}
\frac{x_{k}'-b_{k}}{\|x_{k}'-b_{k}\|} =\frac{x_{k}'-x_k+x_k-b_{k}}{\|x_{k}'-x_k+x_k-b_{k}\|} \to-\frac{v}{\|v\|}.
\end{align}
It follows from \eqref{Sets_equal-1} and \eqref{Sets_equal-2} that
\begin{align*}
\frac{\ang{v_{1k},x_k'-a_k}}{\|v_{1k}\|\|x_k'-a_k\|} \to 1,\quad
\frac{\ang{-v_{2k},x_k'-b_k}}{\|v_{2k}\|\|x_k'-b_k\|} \to 1.
\end{align*}
Thus, sequences $(a_k)$, $(b_k)$, $(x_k')$, $(v_{1k})$ and $(v_{2k})$ satisfy all the conditions in Definition~\ref{D5'}(ii).
Hence, $(v,-v)\in \overline{N}{}^c_{A,B}(\bar x)$.
\qed
\end{proof}

Since $\overline{N}{}^c_{A,B}(\bar x)\subset \overline{N}_{A,B}(\bar x)$ (Proposition~\ref{P7++}(ii)), the next corollary strengthens Theorem~\ref{T3} (in the Euclidean space setting).

\begin{corollary}\label{C4}
Suppose $X$ is a Euclidean space, $A,B\subset X$ are closed, and $\bx\in A\cap B$.
The following conditions are equivalent:
\begin{enumerate}
\item
$\{A,B\}$ is intrinsically transversal at $\bar x$;
\item
there exists a number $\alpha\in]0,1[$ such that ${\|v_1+v_2\|>\alpha}$
for all $(v_1,v_2)\in\overline{N}{}^c_{A,B}(\bar x)$
satisfying
$\|v_1\|+\|v_2\|=1$;
\item
$\left\{v\in X\mid (v,-v)\in\overline{N}{}^c_{A,B}(\bar x)\right\} \subset \{0\}$.
\end{enumerate}
Moreover, the exact upper bound of all $\alpha$ in {\rm (ii)} equals $\strc$.
\end{corollary}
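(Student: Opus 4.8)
The plan is to deduce Corollary~\ref{C4} by combining Theorem~\ref{T3} with Proposition~\ref{Sets_equal}. The point of Proposition~\ref{Sets_equal} is that, although $\overline{N}{}^c_{A,B}(\bar x)$ is in general strictly smaller than $\overline{N}_{A,B}(\bar x)$ (Proposition~\ref{P7++}(ii)), the two cones contain exactly the same \emph{antidiagonal} pairs $(v,-v)$ with $v\neq0$ --- and it is only such pairs that enter the conditions under consideration; this is precisely why the smaller cone may replace the larger one here.

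First I would prove (i)$\,\Leftrightarrow\,$(iii). Read in the Euclidean notation (identifying $X^*$ with $X$), Theorem~\ref{T3} tells us that (i) is equivalent to $\{v\in X\mid(v,-v)\in\overline{N}_{A,B}(\bar x)\}\subset\{0\}$. Since $\overline{N}{}^c_{A,B}(\bar x)\subset\overline{N}_{A,B}(\bar x)$, that inclusion at once forces $\{v\mid(v,-v)\in\overline{N}{}^c_{A,B}(\bar x)\}\subset\{0\}$, i.e.\ (iii). Conversely, if (iii) holds and some $v\neq0$ satisfied $(v,-v)\in\overline{N}_{A,B}(\bar x)$, then Proposition~\ref{Sets_equal} would give $(v,-v)\in\overline{N}{}^c_{A,B}(\bar x)$, contradicting (iii); hence the Theorem~\ref{T3} characterization holds and (i) follows.

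Next I would settle (ii) and the quantitative ``moreover'' claim, working from the representation \eqref{itrc+}. Because $\dim X<\infty$ and $\overline{N}{}^c_{A,B}(\bar x)$ is a closed cone (Proposition~\ref{P7++}(i)), the slice $S:=\{(v_1,v_2)\in\overline{N}{}^c_{A,B}(\bar x)\mid\|v_1\|+\|v_2\|=1\}$ is compact, so $\strc=\min_{S}\|v_1+v_2\|$ (with value $1$ when $S=\emptyset$), and $\strc\le1$ by Proposition~\ref{P10}(i). A number $\alpha\in]0,1[$ meets the requirement in (ii) if and only if $\alpha<\strc$: for such $\alpha$ one has $\|v_1+v_2\|\ge\strc>\alpha$ everywhere on $S$ (vacuously if $S=\emptyset$), while $\alpha\ge\strc$ is never admissible since the minimum over $S$ is attained. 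Hence the exact upper bound of all admissible $\alpha$ equals $\strc$ (the supremum of the empty set of admissible $\alpha$ being $0=\strc$ when $\strc=0$), and (ii) holds precisely when $\strc>0$.

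Finally I would tie (ii) to (iii). If (iii) fails, pick $v\neq0$ with $(v,-v)\in\overline{N}{}^c_{A,B}(\bar x)$; the cone property of Proposition~\ref{P7++}(i) gives $\bigl(\tfrac{v}{2\|v\|},-\tfrac{v}{2\|v\|}\bigr)\in S$ with $\|v_1+v_2\|=0$, so $\strc=0$ and (ii) fails. If instead $\strc=0$, the minimum in \eqref{itrc+} is attained at some $(v_1,v_2)\in S$ with $v_1=-v_2$, whence $v_1\neq0$ (because $\|v_1\|+\|v_2\|=1$) and $(v_1,-v_1)\in\overline{N}{}^c_{A,B}(\bar x)$ contradicts (iii). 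Thus (ii)$\,\Leftrightarrow\,$(iii), which together with the equivalence (i)$\,\Leftrightarrow\,$(iii) established above completes the proof. I do not anticipate a real obstacle: all the analytic work is already in Proposition~\ref{Sets_equal}, and what remains is the careful bookkeeping of normalizations and of the empty-set conventions carried over from \eqref{itrc+} and Definition~\ref{D3+}.
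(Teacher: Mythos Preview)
Your proof is correct and follows precisely the route the paper has in mind: the paper presents Corollary~\ref{C4} as an immediate consequence of Theorem~\ref{T3} once Proposition~\ref{Sets_equal} (together with the inclusion from Proposition~\ref{P7++}(ii)) shows that the two cones share the same nonzero antidiagonal pairs, with the quantitative ``moreover'' claim coming directly from representation~\eqref{itrc+}. You have simply spelled out the details (normalizations, attainment of the minimum, empty-set conventions) that the paper leaves implicit.
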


Comparing Corollary~\ref{C4} with Theorem~\ref{T3+} and taking into account Proposition~\ref{P10}(iii), we arrive at the following equivalences of the three transversality properties for closed convex sets in Euclidian spaces.

\begin{corollary}\label{C4+}
Suppose $X$ is a Euclidean space, $A,B\subset X$ are closed and convex, and $\bx\in A\cap B$.
The following conditions are equivalent:
\begin{enumerate}
\item
$\{A,B\}$ is intrinsically transversal at $\bar x$;
\item
$\{A,B\}$ is weakly intrinsically transversal at $\bar x$;
\item
$\{A,B\}$ is subtransversal at $\bar x$.
\end{enumerate}
\end{corollary}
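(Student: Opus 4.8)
The plan is to funnel the three properties through the constants $\itr$, $\itrd{w}$, $\strc$ and $\str$ and then invoke the characterizations already established in this section. By definition, $\{A,B\}$ is intrinsically transversal at $\bx$ iff $\itr>0$, weakly intrinsically transversal iff $\itrd{w}>0$, and subtransversal iff $\str>0$; so it suffices to prove that, under the hypotheses of the corollary ($X$ Euclidean, $A,B$ closed and convex), one has $\itr>0\Leftrightarrow\itrd{w}>0\Leftrightarrow\strc>0\Leftrightarrow\str>0$.

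The key comparison is between Corollary~\ref{C4} and Theorem~\ref{T3+}. Corollary~\ref{C4}, which requires only that $X$ be Euclidean and $A,B$ be closed, says that intrinsic transversality of $\{A,B\}$ at $\bx$ is equivalent to the inclusion $\{v\in X\mid(v,-v)\in\overline{N}{}^c_{A,B}(\bx)\}\subset\{0\}$, with associated constant $\strc$. Theorem~\ref{T3+}, which uses convexity, says that subtransversality of $\{A,B\}$ at $\bx$ is equivalent to the very same inclusion (identifying $X^*$ with $X$), with associated constant $\str=\strc$. Comparing the two gives at once the equivalence (i)$\Leftrightarrow$(iii), together with the quantitative identity $\strc=\str$ in this setting. (Equivalently, one may argue purely through constants: Corollary~\ref{C4} gives intrinsic transversality $\Leftrightarrow\strc>0$, and Proposition~\ref{P10}(iv) gives $\strc=\str$ for convex sets, hence $\strc>0\Leftrightarrow\str>0$.)

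For (ii) I would use Proposition~\ref{P10}. Part~(i) of that proposition gives the sandwich $\itr\le\itrd{w}\le\strc$, while part~(iii) — once convexity collapses the limiting normal cones $\overline{N}_A$, $\overline{N}_B$ to the convex normal cones appearing in the definition of $\strc$ — yields $\itrd{w}=\strc$ (or invoke Proposition~\ref{P10}(v) directly for $\itrd{w}=\strc=\str$). Combined with the already-established equivalence $\itr>0\Leftrightarrow\strc>0\Leftrightarrow\str>0$, this places $\itrd{w}>0$ in the same range, giving (i)$\Leftrightarrow$(ii)$\Leftrightarrow$(iii). Note that the implications (i)$\Rightarrow$(ii)$\Rightarrow$(iii) are already available (via Proposition~\ref{P10}(i) and the Asplund‑space results), so the genuinely new content is the reverse chain.

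I do not expect a real obstacle at this level, since the heavy lifting sits in the results being quoted; the point deserving care is the legitimacy of replacing $\overline{N}_{A,B}(\bx)$ by the smaller cone $\overline{N}{}^c_{A,B}(\bx)$ inside Corollary~\ref{C4}. This rests on Proposition~\ref{Sets_equal}, whose proof is where the Euclidean geometry is genuinely used: given a pair $(v,-v)\in\overline{N}_{A,B}(\bx)$, one reparametrizes the segment from $a_k$ towards $x_k$ to a point $x_k'$ with $\|x_k'-a_k\|=\|x_k'-b_k\|$, and Lemmas~\ref{L2} and \ref{Tech} ensure that the directional alignment and the normal‑cone membership persist in the limit, placing $(v,-v)$ in $\overline{N}{}^c_{A,B}(\bx)$. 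Once that, Theorem~\ref{T3+} and Proposition~\ref{P10} are in hand, the corollary is merely an alignment of three prior equivalences.
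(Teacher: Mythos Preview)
Your argument is correct and matches the paper's own derivation: the paper obtains Corollary~\ref{C4+} by comparing Corollary~\ref{C4} with Theorem~\ref{T3+} (giving (i)$\Leftrightarrow$(iii) via the common condition on $\overline{N}{}^c_{A,B}(\bar x)$ and the identity $\str=\strc$) and then invoking Proposition~\ref{P10}(iii) --- which in the convex case collapses the limiting normal cones to the convex ones and yields $\itrd{w}=\strc$ --- to bring in (ii). Your use of Proposition~\ref{P10}(v) in place of (iii) is an equally valid shortcut, since (v) is itself a consequence of (iii) and (iv) in the convex finite-dimensional setting.
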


\begin{remark}
Transversality is in general stronger than all the properties above, even in the convex setting.
\end{remark}

It is well known that in Euclidean spaces Fr\'echet normals and subdifferentials can be approximated by proximal ones; see e.g. \cite[Exercise~6.18 and Corollary~8.47]{RocWet98}.
As a result, in many statements proximal normals can replace Fr\'echet ones.
This is true, in particular, when characterising intrinsic transversality.
The next proposition is a proximal version of Definition~\ref{D5'}(i).

\begin{proposition}\label{P13}
Suppose $X$ is a Euclidean space,
$A,B\subset X$ are closed, and $\bx\in A\cap B$.
Then $(v_1,v_2)\in\overline{N}_{A,B}(\bar x)$ if and only if there exist sequences $(a_k)\subset A\setminus B$, $(b_k)\subset B\setminus A$, $(x_k),(v_{1k}),(v_{2k})\subset X$ such that
$x_k\ne a_k$, $x_k\ne b_k$ $(k=1,2,\ldots)$, $a_k\to\bx$, $b_k\to\bx$, $x_k\to\bx$, $\frac{\norm{x_k-a_k}}{\norm{x_k-b_k}} \to1$, $v_{1k}\to v_1$, $v_{2k}\to v_2$ as $k\to\infty$, and
\begin{gather}\label{P13-1}
v_{1k}\in N_A^p(a_k),\;\; v_{2k}\in N_B^p(b_k)\;\; (k=1,2,\ldots),
\;\;
\frac{\ang{v_{1k},x_k-a_k}}{\norm{v_{1k}}\norm{x_k-a_k}} \to1,\;\; \frac{\ang{v_{2k},x_k-b_k}}{\norm{v_{2k}}\norm{x_k-b_k}} \to1,
\end{gather}
with the convention that $\frac{0}{0}=1$.
\end{proposition}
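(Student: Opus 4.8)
The statement asserts that in the definition of $\overline{N}_{A,B}(\bar x)$ (Definition~\ref{D5'}(i)) one may replace the Fr\'echet normal cones $N_A(a_k)$, $N_B(b_k)$ by the proximal normal cones $N_A^p(a_k)$, $N_B^p(b_k)$. Since $N_A^p(a)\subset N_A(a)$ always, the \emph{if} direction is immediate: any sequences satisfying \eqref{P13-1} automatically satisfy \eqref{D5'-3}, so the corresponding pair lies in $\overline{N}_{A,B}(\bar x)$. The whole content is the converse, and the plan is to run a diagonal approximation argument. Suppose $(v_1,v_2)\in\overline{N}_{A,B}(\bar x)$ with witnessing sequences $(a_k)$, $(b_k)$, $(x_k)$, $(v_{1k})$, $(v_{2k})$ as in Definition~\ref{D5'}(i); by Proposition~\ref{P15}(i) and Remark~\ref{D3}.1 we may assume $\norm{v_{1k}}=\norm{v_1}$, $\norm{v_{2k}}=\norm{v_2}$, and (when $v_1\neq 0$, $v_2\neq 0$) that $\frac{x_k-a_k}{\norm{x_k-a_k}}\to\frac{v_1}{\norm{v_1}}$ and $\frac{x_k-b_k}{\norm{x_k-b_k}}\to\frac{v_2}{\norm{v_2}}$. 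If $v_1=0$ we simply keep $v_{1k}:=0\in N_A^p(a_k)$ (and likewise for $v_2$), so the interesting case is $v_1,v_2\neq 0$.

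The core tool is the density of proximal normals among Fr\'echet normals: for fixed $a_k$ and the Fr\'echet normal direction $v_{1k}/\norm{v_{1k}}\in N_A(a_k)$, by \cite[Exercise~6.18 and Corollary~8.47]{RocWet98} there exist points $\tilde a\in A$ arbitrarily close to $a_k$ and proximal normals $\tilde v\in N_A^p(\tilde a)$ arbitrarily close (in direction) to $v_{1k}/\norm{v_{1k}}$. Concretely, for each $k$ I would pick $a_k'\in A\setminus B$ (closeness to $a_k$, together with $a_k\in A\setminus B$ and $B$ closed, keeps us in $A\setminus B$ for large $k$) with $\norm{a_k'-a_k}<1/k$, and $v_{1k}'\in N_A^p(a_k')$ with $\norm{v_{1k}'}=\norm{v_1}$ and $\norm{v_{1k}'-v_{1k}}<1/k$; symmetrically choose $b_k'\in B\setminus A$ and $v_{2k}'\in N_B^p(b_k')$. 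Then $a_k'\to\bx$, $b_k'\to\bx$, $v_{1k}'\to v_1$, $v_{2k}'\to v_2$. It remains to adjust the ``test point'' $x_k$ to $x_k'$ so that the alignment conditions $\frac{\ang{v_{1k}',x_k'-a_k'}}{\norm{v_{1k}'}\norm{x_k'-a_k'}}\to1$ and the analogous one for $B$ hold, together with $\frac{\norm{x_k'-a_k'}}{\norm{x_k'-b_k'}}\to1$.

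For the adjustment I would set, for large $k$, $x_k':=a_k'+\norm{x_k-a_k}\,\frac{v_{1k}'}{\norm{v_{1k}'}}$, i.e.\ move from $a_k'$ in the exact direction of the proximal normal $v_{1k}'$ by the same distance as before. Then $\ang{v_{1k}',x_k'-a_k'}=\norm{v_{1k}'}\norm{x_k'-a_k'}$ holds \emph{exactly}, so the first alignment condition is trivially satisfied (and $x_k'\neq a_k'$ since $v_1\neq0$, $\norm{x_k-a_k}>0$ eventually). For the $B$-alignment: $x_k'-b_k'=(x_k'-x_k)+(x_k-a_k')+(a_k'-b_k')$... more cleanly, one estimates $\norm{x_k'-x_k}$. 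Since $v_{1k}'/\norm{v_{1k}'}\to v_1/\norm{v_1}$ and $\frac{x_k-a_k}{\norm{x_k-a_k}}\to v_1/\norm{v_1}$, we get $\big\|\frac{x_k'-a_k'}{\norm{x_k-a_k}}-\frac{x_k-a_k}{\norm{x_k-a_k}}\big\|=\big\|\frac{v_{1k}'}{\norm{v_{1k}'}}-\frac{x_k-a_k}{\norm{x_k-a_k}}\big\|\to0$, hence $\frac{x_k'-x_k}{\norm{x_k-a_k}}=\frac{x_k'-a_k'}{\norm{x_k-a_k}}-\frac{x_k-a_k}{\norm{x_k-a_k}}+\frac{a_k-a_k'}{\norm{x_k-a_k}}\to0$ provided $\norm{a_k-a_k'}=o(\norm{x_k-a_k})$ — which I can arrange by choosing $\norm{a_k'-a_k}$ (and $\norm{b_k'-b_k}$, $\norm{v_{ik}'-v_{ik}}$) smaller than, say, $\frac1k\norm{x_k-a_k}$ in the approximation step, since $\norm{x_k-a_k}>0$ for each fixed $k$. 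Consequently $x_k'-b_k'=(x_k-b_k)+o(\norm{x_k-a_k})=(x_k-b_k)+o(\norm{x_k-b_k})$ (using $\frac{\norm{x_k-a_k}}{\norm{x_k-b_k}}\to1$), so $\frac{x_k'-b_k'}{\norm{x_k'-b_k'}}\to\frac{v_2}{\norm{v_2}}$ and, by Lemma~\ref{L2} (with $u_k=\frac{x_k'-b_k'}{\norm{x_k'-b_k'}}$, $v_k=v_{2k}'$), the second alignment condition follows; the ratio condition $\frac{\norm{x_k'-a_k'}}{\norm{x_k'-b_k'}}\to1$ is likewise immediate from $\norm{x_k'-a_k'}=\norm{x_k-a_k}$ and $\norm{x_k'-b_k'}=\norm{x_k-b_k}+o(\norm{x_k-b_k})$. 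Thus $(a_k')$, $(b_k')$, $(x_k')$, $(v_{1k}')$, $(v_{2k}')$ verify \eqref{P13-1}.

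The main obstacle is bookkeeping the rates: the proximal-normal approximation in \cite{RocWet98} controls closeness of $(\tilde a,\tilde v)$ to a given Fr\'echet pair, but to keep the perturbation $\norm{a_k'-a_k}$ negligible \emph{relative to} the (possibly very small) scale $\norm{x_k-a_k}$ one must choose the approximation tolerances $k$-dependently, which is legitimate since $k$ is fixed when the tolerance is chosen. One should also handle the degenerate cases ($v_1=0$ or $v_2=0$, handled by taking the zero normal) and verify $a_k'\in A\setminus B$, $b_k'\in B\setminus A$ for large $k$ (from $\dist(a_k,B)\ge \dist(\bx\text{-free neighbourhood})$... more precisely, since $a_k\notin B$ and $B$ is closed, $\dist(a_k,B)>0$ for each $k$, and one picks $\norm{a_k'-a_k}$ smaller than that too). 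None of this is deep, but it is the only place where care is genuinely required.
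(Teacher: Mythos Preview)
Your argument is correct. You invoke the density of proximal normals among Fr\'echet normals (the paper itself cites \cite[Exercise~6.18 and Corollary~8.47]{RocWet98} for this fact) to produce $a_k'\in A\setminus B$, $v_{1k}'\in N_A^p(a_k')$ close to $a_k$, $v_{1k}$ (and symmetrically on the $B$-side), with tolerances chosen $k$-dependently so that $\|a_k'-a_k\|=o(\|x_k-a_k\|)$, etc. You then redefine the test point $x_k':=a_k'+\|x_k-a_k\|\,v_{1k}'/\|v_{1k}'\|$, which forces the $A$-alignment exactly, and recover the $B$-alignment and the ratio condition from $\|x_k'-x_k\|=o(\|x_k-a_k\|)$ via Lemma~\ref{L2}. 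All the bookkeeping you flag (keeping $a_k'\notin B$ via $\|a_k'-a_k\|<d(a_k,B)$, the degenerate cases $v_i=0$, and $x_k'\ne b_k'$) is handled correctly.

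The paper takes a different, self-contained route: instead of citing the density result, it constructs the proximal normal explicitly by projection. For each $k$ it moves a small distance $t_k$ from $a_k$ in the direction $v_{1k}$ to an auxiliary point $x_k''$, projects $x_k''$ onto $A$ to obtain $a_k'$, and sets $v_{1k}':=\frac{x_k''-a_k'}{\|x_k''-a_k'\|}\|v_{1k}\|\in N_A^p(a_k')$; the Fr\'echet normal inequality at $a_k$ forces $\|a_k'-a_k\|\le\frac{1}{2k}\|x_k''-a_k\|$, from which all the required limits follow. Crucially, the paper keeps the \emph{original} $x_k$ and verifies the alignment of $v_{1k}'$ with $x_k-a_k'$ directly, so the $A$- and $B$-sides are treated independently and symmetrically. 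Your approach is shorter once the density fact is granted, at the cost of an asymmetric construction (the new $x_k'$ is tailored to the $A$-side, and the $B$-side must then be recovered by perturbation estimates); the paper's approach is more explicit and avoids the asymmetry, at the cost of a longer computation.
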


\begin{proof}
Since the proximal normal cone is always a subset of the Fr\'echet normal cone,
the `if' part is trivial.

Conversely, let $(v_1,v_2)\in\overline{N}_{A,B}(\bar x)$.
If $v_1=0$, for any $a_k\to\bx$, one can take $v_{1k}=0\in N_A^p(a_k)$ $(k=1,2\ldots)$.
Let $v_1\ne0$ and sequences $(x_k)$, $(a_k)$ and $(v_{1k})$ with $v_{1k}\ne0$ satisfy the conditions in Definition~\ref{D5}(i) with $v_1$ and $v_{1k}$ in place of $x_1^*$ and $x_{1k}^*$, respectively.
For each $k=1,2\ldots$, since $v_{1k}\in N_A(a_k)$, there exists a $\de>0$ such that
\begin{align}\label{P13P-1}
\ang{v_{1k},a-a_k}\le\frac{1}{4k}\norm{v_{1k}}\norm{a-a_k}
\qdtx{for all} a\in A\cap\B_\de(a_k).
\end{align}
Take a $t_k>0$ such that
\begin{align}\label{P13P-3}
t_k< \min\left\{\frac{\de}{2},\norm{x_{k}-a_k}, \frac{1}{2}d(a_k,B)\right\} \norm{v_{1k}}\iv,
\end{align}
set $x_{k}':=a_k+t_kv_{1k}$, and choose an $a_k'\in P_A(x_{k}')$.
Then
\begin{gather}\notag
\norm{x_{k}'-a_k}=t_k\norm{v_{1k}}<\norm{x_{k}-a_k},
\\\label{P13P-2}
\norm{a_k'-a_k}^2
=\norm{x_{k}'-a_k'}^2-\norm{x_{k}'-a_k}^2+ 2\ang{x_{k}'-a_k,a_k'-a_k}
\le 2t_k\ang{v_{1k},a_k'-a_k},
\\\label{P13P-4}
\norm{a_k'-a_k}\le2t_k\norm{v_{1k}}<\de,
\\\notag
d(a_k',B)\ge d(a_k,B)-\norm{a_{k}'-a_k}\ge d(a_k,B)-2t_k\norm{v_{1k}}>0.
\end{gather}
We have
$a_k'\in A\setminus B$, $x_{k}'\to\bx$, $a_k'\to\bx$ as $k\to\infty$.
It follows from \eqref{P13P-2}, \eqref{P13P-4} and \eqref{P13P-1} that
\begin{align*}
\norm{a_k'-a_k}\le\frac{t_k}{2k}\norm{v_{1k}} =\frac{1}{2k}\norm{x_{k}'-a_k},
\end{align*}
and consequently,
\begin{align*}
\norm{x_{k}'-a_k'}&\le\norm{x_{k}'-a_k},
\\
\norm{x_{k}'-a_k'}&\ge\norm{x_{k}'-a_k}-\norm{a_k'-a_k}\ge \left(1-\frac{1}{2k}\right)\norm{x_{k}'-a_k}>0,
\\
\norm{x_{k}-a_k'}&\le\norm{x_{k}-a_k}+\norm{a_k'-a_k}< \left(1+\frac{1}{2k}\right)\norm{x_{k}-a_k},
\\
\norm{x_{k}-a_k'}&\ge\norm{x_{k}-a_k}-\norm{a_k'-a_k}> \left(1-\frac{1}{2k}\right)\norm{x_{k}-a_k}.
\end{align*}
Hence,
\begin{align*}
\lim_{k\to\infty} \frac{\norm{a_{k}'-a_k}}{\norm{x_{k}'-a_k}}=0,
\quad
\lim_{k\to\infty} \frac{\norm{x_{k}'-a_k'}}{\norm{x_{k}'-a_k}}
=\lim_{k\to\infty} \frac{\norm{x_{k}-a_k'}}{\norm{x_{k}-a_k}}=1.
\end{align*}
Set
\begin{align*}
v_{1k}':=\frac{x_{k}'-a_k'}{\norm{x_{k}'-a_k'}}\norm{v_{1k}}.
\end{align*}
We obviously have $v_{1k}'\in N_A^p(a_k')\setminus\{0\}$, $\norm{v_{1k}'}=\norm{v_{1k}}$,
\begin{align*}
\lim_{k\to\infty}v_{1k}' =\lim_{k\to\infty} \frac{x_{k}'-a_k'}{\norm{x_{k}'-a_k'}}\norm{v_{1k}} =\lim_{k\to\infty} \frac{x_{k}'-a_k}{\norm{x_{k}'-a_k}}\norm{v_{1k}}
=\lim_{k\to\infty} v_{1k}=v,
\end{align*}
\begin{align*}
\lim_{k\to\infty} \frac{\ang{v_{1k}',x_k-a_k'}}{\norm{v_{1k}'}\norm{x_k-a_k'}} &=\lim_{k\to\infty} \frac{\ang{x_{k}'-a_k',x_k-a_k'}} {\norm{x_{k}'-a_k'}\norm{x_k-a_k'}}
\\
&=\lim_{k\to\infty} \frac{\ang{x_{k}'-a_k,x_k-a_k}} {\norm{x_{k}'-a_k}\norm{x_k-a_k}}
\\
&=\lim_{k\to\infty}\frac{\ang{v_{1k},x_k-a_k}} {\norm{v_{1k}}\norm{x_k-a_k}}=1.
\end{align*}
Thus the sequences $(a_k')$, and $(v_{1k}')$ satisfy the conditions in the proposition.
Similarly, given a $v_2$ and sequences $(x_k)$, $(b_k)$ and $(v_{2k})$ satisfying the conditions in Definition~\ref{D5}(i), one can construct sequences $(b_k')$, and $(v_{2k}')$ satisfying the conditions in the proposition.
This concludes the proof.
\qed\end{proof}

\begin{remark}
1. In Proposition~\ref{P13}, one can always assume that $\norm{v_{1k}}=\norm{v_1}$, $\norm{v_{2k}}=\norm{v_2}$, $(k=1,2,\ldots)$; cf. Remark~\ref{D3}.1.

2. For the set $\overline{N}_{A}(\bar x;(x_k))$ of limiting normals to $A$ at $\bx$ relative to $(x_k)$ defined in Remark~\ref{D3}.2, similarly to  Proposition~\ref{P13}, one can show that $v\in\overline{N}_{A}(\bar x;(x_k))$ if and only if there exist sequences $(a_k)\subset A$ and $(v_k)\subset X$ such that
\begin{gather*}\notag
a_k\ne x_k,\quad v_{k}\in N_A^p(a_k)\quad (k=1,2,\ldots),
\quad
a_k\to\bx,\quad v_{k}\to v,
\quad
\frac{\ang{v_{k},x_k-a_k}}{\norm{v_k}\norm{x_k-a_k}} \to1,
\end{gather*}
with the convention that $\frac{0}{0}=1$.
\xqed\end{remark}

The next statement is a proximal version of Definition~\ref{D3+}(ii).
It is a consequence of Theorem~\ref{T3} and Proposition~\ref{P13}.

\begin{theorem}\label{T4}
Suppose $X$ is a Euclidean space,
$A,B\subset X$ are closed, and $\bx\in A\cap B$.
Then $\{A,B\}$ is intrinsically transversal
at $\bx$ if and only if there exist numbers $\alpha\in]0,1[$ and $\rho>0$ such that ${\|v_{1}+v_{2}\|>\alpha}$
for all $a\in(A\setminus B)\cap\B_\de(\bx)$, $b\in(B\setminus A)\cap\B_\de(\bx)$, $x\in\B_\de(\bx)$ with
$x\ne a$, $x\ne b$, $1-\delta<\frac{\norm{x-a}}{\norm{x-b}}<1+\delta$,
and all $v_{1}\in N_{A}^p(a)$, $v_{2}\in N_{B}^p(b)$
satisfying
\begin{gather*}\notag
\|v_{1}\|+\|v_{2}\|=1,\quad
\frac{\ang{v_1,x-a}} {\|v_1\|\|x-a\|} >1-\rho,\quad
\frac{\ang{v_2,x-b}} {\|v_2\|\|x-b\|} >1-\rho,
\end{gather*}
with the convention that $\frac{0}{0}=1$.
Moreover, the exact upper bound of all such $\al$ equals $\itr$.
\end{theorem}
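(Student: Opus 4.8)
The statement is a proximal reformulation of the intrinsic transversality property from Definition~\ref{D3+}(ii), so the natural strategy is to show that the dual-space sufficient condition here is equivalent to the one in that definition, and then invoke Theorem~\ref{T3} (together with the limiting description \eqref{itr+} of $\itr$) to conclude. The plan is to prove the equivalence by showing that the quantity defined through the condition stated in this theorem --- call it temporarily $\itr^p$ --- coincides with $\itr$ as given by \eqref{itr}. The inequality $\itr^p\ge\itr$ is the easy direction: since $N_A^p(a)\subset N_A(a)$ and $N_B^p(b)\subset N_B(b)$, every admissible quintuple $(a,b,x,v_1,v_2)$ in the proximal condition is also admissible in \eqref{itr}, so the infimum defining $\itr$ is taken over a larger set and hence is no larger; consequently the supremum of all $\al$ for which the proximal condition holds is at least $\itr$.

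The substantive direction is $\itr^p\le\itr$, equivalently: every pair of relative limiting normals built from Fr\'echet normals can also be built from proximal normals. But this is \emph{exactly} Proposition~\ref{P13}, which asserts $(v_1,v_2)\in\overline{N}_{A,B}(\bx)$ if and only if the defining sequences can be chosen with $v_{1k}\in N_A^p(a_k)$ and $v_{2k}\in N_B^p(b_k)$. So the cleanest route is: combine Theorem~\ref{T3} (intrinsic transversality $\iff$ $\|v_1^*+v_2^*\|>\al$ for all $(v_1^*,v_2^*)\in\overline{N}_{A,B}(\bx)$ with $\|v_1^*\|+\|v_2^*\|=1$, with exact bound $\itr$) with Proposition~\ref{P13} to re-express membership in $\overline{N}_{A,B}(\bx)$ via proximal normals, and then observe that the $\eps$--$\de$ ``finitary'' condition stated in the present theorem is precisely the negation-free rewriting of ``$\itr^p>0$'', with the supremum of admissible $\al$ equal to $\itr^p=\itr$. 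In other words, the theorem is the conjunction of Theorem~\ref{T3} and Proposition~\ref{P13}, so the proof is essentially a two-line citation once the bookkeeping is set up.

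The one point requiring a little care --- and the place I expect to spend the most effort --- is matching the quantified ``$\liminf$'' formulation \eqref{itr} against the ``there exist $\al,\de,\rho$'' formulation in the statement, i.e.\ checking that passing from the limiting definition to the $\eps$--$\de$ definition introduces no gap. This is the same standard equivalence already used implicitly in Definition~\ref{D3+}(ii) versus \eqref{itr}: if the proximal condition fails for every $\al<c$ (where $c$ is the claimed value), one extracts, for each $n$, an admissible quintuple with $\|v_1+v_2\|\le c-1/n$ and all the ``directedness'' ratios within $1/n$ of $1$ and all points within $1/n$ of $\bx$; by compactness of the unit sphere in $X\times X$ (finite dimensions!) one passes to a convergent subsequence of $(v_1,v_2)$, obtaining via Proposition~\ref{P13} a pair in $\overline{N}_{A,B}(\bx)$ on the unit simplex with $\|v_1^*+v_2^*\|\le c-$, contradicting Theorem~\ref{T3}'s bound $\itr$ if $c>\itr$; and the reverse inequality runs the same way. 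This compactness extraction, though routine, is the only real content beyond quoting the earlier results, so I would write it out carefully and leave the rest as direct appeals to Theorem~\ref{T3} and Proposition~\ref{P13}.
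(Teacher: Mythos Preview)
Your proposal is correct and matches the paper's approach exactly: the paper states that Theorem~\ref{T4} ``is a consequence of Theorem~\ref{T3} and Proposition~\ref{P13}'' and gives no further argument, and you have identified precisely these two ingredients together with the routine $\liminf$-versus-$\eps$--$\de$ bookkeeping needed to make the citation rigorous.
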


\section{More characterizations of intrinsic transversality in Euclidian spaces}\label{S4}

In Euclidian spaces one can go further than restricting the set of relative limiting normals when computing the dual space intrinsic transversality constant \eqref{itr+} to only nonzero ones as observed in Remark~\ref{R6}.2: it is sufficient to consider only unit normals.

\begin{proposition}\label{P17}
Suppose $X$ is a Euclidean space, $A,B\subset X$ are closed, and $\bx\in A\cap B$.
Then
\begin{gather}\label{P17-1}
\itr= \frac{1}{2} \min_{\substack{(v_1,v_2)\in\overline{N}_{A,B}(\bar x)\\\|v_1\|=\|v_2\|=1}}\|v_1+v_2\|,
\end{gather}
with the convention that the minimum over the empty set equals $2$.
\end{proposition}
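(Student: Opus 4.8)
The plan is to read off (\ref{P17-1}) directly from the representation (\ref{itr+}) of $\itr$, using as the only real ingredient the geometric inequality of Lemma~\ref{L3} together with the separate‑scaling property of $\overline{N}_{A,B}(\bar x)$ from Proposition~\ref{P7++}(i). Write $m$ for the value of the minimum on the right‑hand side of (\ref{P17-1}) before the factor $\tfrac12$, i.e.\ $m:=\min\{\norm{v_1+v_2}\mid (v_1,v_2)\in\overline{N}_{A,B}(\bar x),\ \norm{v_1}=\norm{v_2}=1\}$ (with $m:=2$ when this set is empty), so that the assertion to be proved is $\itr=m/2$.

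First I would dispose of the degenerate case. By Proposition~\ref{P7++}(i), if $(v_1,v_2)\in\overline{N}_{A,B}(\bar x)$ with $v_1\ne0$ and $v_2\ne0$, then $(v_1/\norm{v_1},v_2/\norm{v_2})\in\overline{N}_{A,B}(\bar x)$ is a unit pair; conversely, scaling any unit pair by $1/2$ yields an admissible pair for (\ref{itr+}) with both components nonzero. Hence the set of unit pairs in $\overline{N}_{A,B}(\bar x)$ is empty if and only if there is no admissible pair for (\ref{itr+}) with $v_1\ne0$ and $v_2\ne0$; by Remark~\ref{R6}.2 this forces $\itr=1$, while the convention in (\ref{P17-1}) gives $m/2=2/2=1$, so the identity holds in this case. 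From now on I assume the set of unit pairs in $\overline{N}_{A,B}(\bar x)$ is nonempty; being closed (Proposition~\ref{P7++}(i)) and bounded it is compact, and since $(v_1,v_2)\mapsto\norm{v_1+v_2}$ is continuous the value $m$ is attained at some unit pair $(\hat v_1,\hat v_2)$.

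For the inequality $\itr\le m/2$ I would observe that $(\hat v_1/2,\hat v_2/2)$ lies in the cone $\overline{N}_{A,B}(\bar x)$ and satisfies $\norm{\hat v_1/2}+\norm{\hat v_2/2}=1$, so by (\ref{itr+}), $\itr\le\norm{\hat v_1/2+\hat v_2/2}=\tfrac12\norm{\hat v_1+\hat v_2}=m/2$. For the reverse inequality I would use Remark~\ref{R6}.2 to restrict the minimum in (\ref{itr+}) to pairs $(v_1,v_2)\in\overline{N}_{A,B}(\bar x)$ with $v_1\ne0$, $v_2\ne0$ and $\norm{v_1}+\norm{v_2}=1$; for any such pair Lemma~\ref{L3} gives
\[
\norm{v_1+v_2}=\frac{\norm{v_1+v_2}}{\norm{v_1}+\norm{v_2}}\ge\frac12\norm{\frac{v_1}{\norm{v_1}}+\frac{v_2}{\norm{v_2}}}\ge\frac{m}{2},
\]
where the last step is because $(v_1/\norm{v_1},v_2/\norm{v_2})$ is, by Proposition~\ref{P7++}(i), a unit pair in $\overline{N}_{A,B}(\bar x)$ and hence competes in the minimum defining $m$. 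Taking the infimum over all such pairs yields $\itr\ge m/2$, and together with the previous bound this proves $\itr=m/2$, which is exactly (\ref{P17-1}).

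I do not anticipate a genuine obstacle: the analytic substance is entirely carried by the already‑proved Lemma~\ref{L3}, and the only places needing care are matching the empty‑set conventions on the two sides and invoking the separate‑scaling property of $\overline{N}_{A,B}(\bar x)$ in both directions (to pass between general nonzero pairs and unit pairs).
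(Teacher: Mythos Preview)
Your proposal is correct and follows essentially the same approach as the paper's own proof: both handle the degenerate case via the conventions, obtain $\itr\le m/2$ by scaling a unit pair by $1/2$, and obtain the reverse inequality by applying Lemma~\ref{L3} together with the separate-scaling property of $\overline{N}_{A,B}(\bar x)$ from Proposition~\ref{P7++}(i) to pass from an arbitrary nonzero admissible pair in \eqref{itr+} to a unit pair competing in the definition of $m$.
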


\begin{proof}
If there is no pair $(v_1,v_2)\in\overline{N}_{A,B}(\bar x)$ with $v_1\ne0$ and $v_2\ne0$, then both sides in \eqref{P17-1} equal 1.

Given any $(v_1,v_2)\in\overline{N}_{A,B}(\bar x)$ with $\|v_1\|=\|v_2\|=1$, we have $(\frac{v_1}{2},\frac{v_2}{2})\in\overline{N}_{A,B}(\bar x)$ and $\norm{\frac{v_1}{2}}+\norm{\frac{v_2}{2}}=1$.
Hence, by \eqref{itr+},
\begin{gather*}
\itr\le \frac{1}{2}\norm{v_1+v_2},
\end{gather*}
and consequently,
\begin{gather*}
\itr\le \frac{1}{2} \min_{\substack{(v_1,v_2)\in\overline{N}_{A,B}(\bar x)\\\|v_1\|=\|v_2\|=1}}\|v_1+v_2\|.
\end{gather*}

On the other hand, notice that, when evaluating the minimum in \eqref{itr+}, it is sufficient to consider only $(v_1,v_2)\in\overline{N}_{A,B}(\bar x)$ with $v_1\ne0$ and $v_2\ne0$.
Given any $(v_1,v_2)\in\overline{N}_{A,B}(\bar x)$ with $v_1\ne0$, $v_2\ne0$ and $\|v_1\|+\|v_2\|=1$, we define $w_1=\frac{v_1}{\norm{v_1}}$ and $w_2=\frac{v_2}{\norm{v_2}}$.
Then $(w_1,w_2)\in\overline{N}_{A,B}(\bar x)$, $\norm{w_1}=\norm{w_2}=1$, and by Lemma~\ref{L3},
\begin{gather*}
\norm{v_1+v_2}\ge \frac{1}{2}\norm{w_1+w_2}. \end{gather*}
Hence,
\begin{gather*}
\frac{1}{2} \min_{\substack{(w_1,w_2)\in\overline{N}_{A,B}(\bar x)\\\|w_1\|=\|w_2\|=1}}\|w_1+w_2\|\le\norm{v_1+v_2},
\end{gather*}
and consequently, by \eqref{itr+},
\begin{gather*}
\frac{1}{2} \min_{\substack{(w_1,w_2)\in\overline{N}_{A,B}(\bar x)\\\|w_1\|=\|w_2\|=1}}\|w_1+w_2\|\le\itr.
\end{gather*}
The proof is completed.
\qed\end{proof}

Alongside $\itr$, several other
constants can be used for characterizing intrinsic transversality in Euclidean spaces:
\begin{align}\label{std1}
\itrd{1}:=& \max_{\substack{(v_1,v_2)\in\overline{N}_{A,B}(\bar x)\\\|v_1\|=\|v_2\|=1}}\|v_1-v_2\|,
\\\label{std2}
\itrd{2}:=& -\min_{\substack{(v_1,v_2)\in\overline{N}_{A,B}(\bar x)\\\|v_1\|=\|v_2\|=1}}\ang{v_1,v_2},
\\\label{std3}
\itrd{3}:=& \min_{\norm{v}=1}d((v,-v),\overline{N}_{A,B}(\bar x)),
\end{align}
with the Euclidean distance in $X\times X$ used in \eqref{std3} and the conventions that in \eqref{std1} and \eqref{std2} the maximum and minimum over the empty set equal $0$ and $1$, respectively, and the distance to the empty set in \eqref{std3} equals the distance to the origin, i.e., $\sqrt{2}$.

The expression $\ang{v_1,v_2}$ in \eqref{std2} can be interpreted as the cosine of the angle between the vectors $v_1$ and $v_2$.
Taking the minimum means minimizing the cosine or, equivalently, maximizing the angle, forcing the vectors to go in opposite directions, potentially making the angle obtuse (or even equal $-\pi$).
In this case, $\itrd{2}>0$.
However, in general, unlike $\itr$, $\itrd{1}$ and $\itrd{3}$, constant $\itrd{2}$ can be negative.

The relationships between each of the constants \eqref{std1}, \eqref{std2} and \eqref{std3} and the original dual space constant $\itr$ \eqref{itrw} (cf. its equivalent representations in \eqref{itr+} and \eqref{P17-1}) are given in the next proposition.
They follow from the geometry of Euclidean space.

\begin{proposition}\label{P18}
Suppose $X$ is a Euclidean space, $A,B\subset X$ are closed, and $\bx\in A\cap B$.
Then
\begin{gather}\label{P18-1}
(\itr)^2+\frac{1}{4} (\itrd{1})^2=1,
\\\label{P18-2}
\itrd{2}+2(\itr)^2=1.
\end{gather}
If either
$\overline{N}_{A,B}(\bar x)$ contains a pair $(v_1,v_2)\in X\times X$ of nonzero positively independent vectors (none of the vectors is a positive multiple of the other),
or $\overline{N}_{A,B}(\bar x)=\{(0,0)\}$ or $\overline{N}_{A,B}(\bar x)=\emptyset$, then
\begin{gather}\label{P18-3}
\itrd{3}=\sqrt{2}\,\itr;
\end{gather}
otherwise $\itrd{3}=\itr=1$.
\end{proposition}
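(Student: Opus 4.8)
The plan is to prove the three identities one at a time. Formulas \eqref{P18-1} and \eqref{P18-2} will come out of Proposition~\ref{P17} and the elementary identities for $\norm{v_1\pm v_2}$ in a Euclidean space, while the assertion about $\itrd{3}$ (together with the attached dichotomy) is the real work and will rest on computing the distance from $(v,-v)$ to the closed cone $\overline{N}_{A,B}(\bar x)$.

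For \eqref{P18-1} and \eqref{P18-2} I would put $\Omega:=\{(v_1,v_2)\in\overline{N}_{A,B}(\bar x)\mid\norm{v_1}=\norm{v_2}=1\}$ and note, from \eqref{std1}, \eqref{std2} and Proposition~\ref{P17} (and Remark~\ref{R6}.2, which lets one drop the pairs with a zero component), that $\itrd{1}=\max_{(v_1,v_2)\in\Omega}\norm{v_1-v_2}$, $\itrd{2}=-\min_{(v_1,v_2)\in\Omega}\ang{v_1,v_2}$ and $2\itr=\min_{(v_1,v_2)\in\Omega}\norm{v_1+v_2}$, with the conventions that $\max\emptyset=0$ and $\min_{\emptyset}\ang{v_1,v_2}=1$, $\min_{\emptyset}\norm{v_1+v_2}=2$. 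For unit $v_1,v_2$ one has $\norm{v_1+v_2}^2=2+2\ang{v_1,v_2}$ and $\norm{v_1-v_2}^2=2-2\ang{v_1,v_2}$, hence $\norm{v_1+v_2}^2+\norm{v_1-v_2}^2=4$. If $\Omega\ne\emptyset$, minimizing $\norm{v_1+v_2}$ over $\Omega$ is the same as maximizing $\norm{v_1-v_2}$ over $\Omega$, giving $(2\itr)^2+(\itrd{1})^2=4$, i.e. \eqref{P18-1}, and $4\itr^2=\min_{\Omega}\norm{v_1+v_2}^2=2+2\min_{\Omega}\ang{v_1,v_2}=2-2\itrd{2}$, i.e. \eqref{P18-2}. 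If $\Omega=\emptyset$, the conventions give $\itr=1$, $\itrd{1}=0$, $\itrd{2}=-1$, and both identities hold too.

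For the $\itrd{3}$ statement I would first record the (standard) formula for the distance to a nonempty closed cone $K$ in a Euclidean space $Y$, easily obtained from the geometry behind Lemmas~\ref{L2}--\ref{L3}: $d(y,K)^2=\norm{y}^2-\bigl(\max\{\sup_{u\in K,\,\norm{u}=1}\ang{y,u},0\}\bigr)^2$. Applying it with $Y=X\times X$, $K=\overline{N}_{A,B}(\bar x)$, $y=(v,-v)$ and using $\ang{(v,-v),(u_1,u_2)}=\ang{v,u_1-u_2}$, then taking the supremum over $(u_1,u_2)$ and afterwards the minimum over unit $v$ (interchanging $\min_v$ with $\sup_{(u_1,u_2)}$ and using $\max_{\norm{v}=1}\ang{v,w}=\norm{w}$), one gets $(\itrd{3})^2=2-N^2$, where $N:=\sup\{\norm{u_1-u_2}\mid(u_1,u_2)\in\overline{N}_{A,B}(\bar x),\ \norm{u_1}^2+\norm{u_2}^2=1\}$ (the cases $\overline{N}_{A,B}(\bar x)=\emptyset$ and $\overline{N}_{A,B}(\bar x)=\{(0,0)\}$ are immediate, since then $\itrd{3}=\sqrt2=\sqrt2\,\itr$). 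One inequality is free: scaling a maximizer of $\itrd{1}$ in $\Omega$ by $1/\sqrt2$ gives an admissible pair, so $N\ge\itrd{1}/\sqrt2$ and, by \eqref{P18-1}, $(\itrd{3})^2\le2-\tfrac12(\itrd{1})^2=2\itr^2$; thus $\itrd{3}\le\sqrt2\,\itr$ holds in all cases. To finish, I would show: (a) if $\overline{N}_{A,B}(\bar x)$ contains a pair of nonzero positively independent vectors then in fact $N=\itrd{1}/\sqrt2$, whence $\itrd{3}=\sqrt2\,\itr$; and (b) otherwise, $\overline{N}_{A,B}(\bar x)$ being neither empty nor $\{(0,0)\}$ still contains a pair $(\hat u,0)$ (or $(0,\hat u)$) with $\norm{\hat u}=1$, so $N\ge1$, while writing an admissible pair as $(u_1,u_2)=(a\hat u_1,b\hat u_2)$ with $a^2+b^2=1$ gives $\norm{u_1-u_2}^2=1-2ab\ang{\hat u_1,\hat u_2}\le1$ (the relevant direction pairs now being ``aligned''), and moreover the only unit pairs in $\Omega$, if any, are of the form $(\hat u,\hat u)$, forcing $\itr=1$; hence $N=1$ and $\itrd{3}=1=\itr$.

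I expect the main obstacle to be part (a): proving $N\le\itrd{1}/\sqrt2$ under the positive-independence hypothesis, i.e. that the supremum $N$ is realized by a ``balanced'' pair $\tfrac1{\sqrt2}(\hat u_1,\hat u_2)$ rather than by a degenerate pair $(\hat u_1,0)$. This is where one has to unwind Definition~\ref{D5} and use the Euclidean geometry of $\overline{N}_{A,B}(\bar x)$ (Lemmas~\ref{L2}, \ref{Tech}, the normalization of Remark~\ref{D3}.1, and the reformulation Proposition~\ref{P15}) to control the inner products $\ang{\hat u_1,\hat u_2}$ of the direction pairs occurring in $\overline{N}_{A,B}(\bar x)$ --- the crux being to deduce from the presence of one positively independent pair that $\itrd{2}\ge0$, after which the estimate $\norm{u_1-u_2}^2=1-2ab\ang{\hat u_1,\hat u_2}\le1+2ab\,\itrd{2}\le1+\itrd{2}=\tfrac12(\itrd{1})^2$ (using $2ab\le1$ and \eqref{P18-1}--\eqref{P18-2}) closes the loop. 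The remaining pieces --- the interchange of $\min$ and $\sup$, the behaviour on the empty set, and the elementary identities --- are routine.
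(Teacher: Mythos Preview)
Your treatment of \eqref{P18-1} and \eqref{P18-2} is correct and coincides with the paper's: both use Proposition~\ref{P17} together with the parallelogram/polarization identities for unit vectors. Your reformulation for \eqref{P18-3} via the distance-to-cone formula, leading to $(\itrd{3})^2=2-N^2$ with
\[
N=\sup\bigl\{\|u_1-u_2\|:(u_1,u_2)\in\overline{N}_{A,B}(\bar x),\ \|u_1\|^2+\|u_2\|^2=1\bigr\},
\]
is clean, and the inequality $N\ge\itrd{1}/\sqrt{2}$ (hence $\itrd{3}\le\sqrt{2}\,\itr$) as well as your ``otherwise'' case (b) are correct.

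The real problem is your plan for (a). Your crux claim --- that the mere presence of one positively independent pair in $\overline{N}_{A,B}(\bar x)$ forces $\itrd{2}\ge0$ --- is false if one uses only the cone property in Proposition~\ref{P7++}(i), which is all the paper claims to use. Take $K:=\{(s\hat u_1,t\hat u_2):s,t\ge0\}$ with unit $\hat u_1,\hat u_2$ and $c:=\langle\hat u_1,\hat u_2\rangle\in(0,1)$. Then $K$ satisfies Proposition~\ref{P7++}(i), the pair $(\hat u_1,\hat u_2)$ is positively independent, but $\Omega=\{(\hat u_1,\hat u_2)\}$ and $\itrd{2}=-c<0$. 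Your estimate
\[
\|u_1-u_2\|^2=1-2ab\langle\hat u_1,\hat u_2\rangle\le1+2ab\,\itrd{2}\le1+\itrd{2}
\]
therefore breaks precisely at the boundary pair $(u_1,u_2)=(\hat u_1,0)$, where $\|u_1-u_2\|=1>\sqrt{1-c}=\itrd{1}/\sqrt{2}$; so $N=1$ and the route ``$\itrd{2}\ge0\Rightarrow N\le\itrd{1}/\sqrt{2}$'' cannot close. None of Lemmas~\ref{L2}, \ref{Tech}, Remark~\ref{D3}.1 or Proposition~\ref{P15} supplies the missing implication, since they concern the internal structure of individual limiting pairs, not a sign constraint on $\min_\Omega\langle v_1,v_2\rangle$.

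The paper does \emph{not} pass through $N$ or through $\itrd{2}\ge0$. Instead it works directly with a minimizer $(v,v_1,v_2)$ of $\|v-v_1\|^2+\|v+v_2\|^2$ over $\|v\|=1$ and $(v_1,v_2)\in\overline{N}_{A,B}(\bar x)$: from the first-order conditions \eqref{P18P-3} and the choice \eqref{P18P-5} it deduces the identity \eqref{P18P-6} and then, using the ``moreover'' part of Proposition~\ref{P7++}(i), argues that one may take $\|v_1\|=\|v_2\|$ at the minimum, which yields \eqref{P18P-7} and hence both inequalities \eqref{P18P-8}--\eqref{P18P-9}. The delicate step there is justifying that the minimum can indeed be taken at a pair with both components nonzero and positively independent; this is exactly the place where your argument fails, and the paper handles it by an explicit geometric computation rather than by a bound of the type you propose.
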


\begin{proof}
If there is no pair $(v_1,v_2)\in\overline{N}_{A,B}(\bar x)$ with $v_1\ne0$ and $v_2\ne0$, then, in accordance with the conventions made, $\itr=1$ (see \eqref{P17-1}), $\itrd{1}=0$, $\itrd{2}=-1$; so equalities \eqref{P18-1} and \eqref{P18-2} hold true.
If either $\overline{N}_{A,B}(\bar x)=\{(0,0)\}$ or $\overline{N}_{A,B}(\bar x)=\emptyset$, then $\itrd{3}=\sqrt{2}$ and equality \eqref{P18-3} holds true.

For any $v_1,v_2\in X$, one has
\begin{gather}\label{P18P-0}
\norm{v_1+v_2}^2 =\norm{v_1}^2+\norm{v_2}^2+2\ang{v_1,v_2},
\\\label{P18P-1}
\norm{v_1+v_2}^2+\norm{v_1-v_2}^2 =2\left(\norm{v_1}^2+\norm{v_2}^2\right).
\end{gather}
From \eqref{P17-1}, \eqref{P18P-1} and \eqref{std1}, we obtain
\begin{gather*}
(\itr)^2=1-\frac{1}{4} \max_{\substack{(v_1,v_2)\in\overline{N}_{A,B}(\bar x)\\\|v_1\|=\|v_2\|=1}}\norm{v_1-v_2}^2 =1-\frac{1}{4}(\itrd{1})^2,
\end{gather*}
which proves \eqref{P18-1}.
Similarly, from \eqref{P17-1}, \eqref{P18P-0} and \eqref{std2},
\begin{gather*}
2(\itr)^2=1+ \min_{\substack{(v_1,v_2)\in\overline{N}_{A,B}(\bar x)\\\|v_1\|=\|v_2\|=1}}\ang{v_1,v_2}=1-\itrd{2},
\end{gather*}
which proves \eqref{P18-2}.

Definition \eqref{std3} can be rewritten as follows:
\begin{align}\label{P18P-2}
(\itrd{3})^2= \min_{\|v\|=1,\,(v_1,v_2)\in\overline{N}_{A,B}(\bar x)} \left(\norm{v-v_1}^2+\norm{v+v_2}^2\right)
\end{align}
with the convention that the minimum over the empty set equals $\sqrt{2}$.
We next prove equality \eqref{P18-3} in the nontrivial case when
$\overline{N}_{A,B}(\bar x)$ contains a pair $(v_1,v_2)$ of nonzero vectors with none of them being a positive multiple of the other.
Let the minimum in \eqref{P18P-2} be attained at some $v\in X$ with $\norm{v}=1$ and $(v_1,v_2)\in \overline{N}_{A,B}(\bar x)$.
Then $v_1$ and $-v_2$ are the projections of $v$ on the rays $R_1$ and $R_2$ determined by $v_1$ and $-v_2$, respectively.
In general, one of the rays or both can be trivial.
However, in the the nontrivial case, we can restrict ourselves to the pairs $(v_1,v_2)$ described above.
Thus, $v_1\ne v_2$, the ray $R_1$ and $R_2$ are nontrivial and do not go in opposite directions.
It also follows from \eqref{P18P-2} that $v$ must lie in the plane determined by the ray $R_1$ and $R_2$ in such a way that $\ang{v,v_1}\ge0$ and $\ang{v,v_2}\le0$.
Since $v_1$ and $-v_2$ are the projections of $v$ on the rays $R_1$ and $R_2$, we have
\begin{align}\label{P18P-3}
\ang{v,v_1}=\|v_1\|^2,
\quad
-\ang{v,v_2}=\|v_2\|^2,
\end{align}
and with $\|v\|=1$ the expression under the $\min$ in \eqref{P18P-2} takes the following form:
\begin{align}\label{P18P-4}
\norm{v-v_1}^2+\norm{v+v_2}^2 =2-\ang{v,v_1}+\ang{v,v_2}=2-\ang{v,v_1-v_2}.
\end{align}
Since $v$ minimizes this expression over the unit sphere, we have
\begin{align}\label{P18P-5}
v=\frac{v_1-v_2}{\norm{v_1-v_2}}.
\end{align}
Hence, in view of \eqref{P18P-3},
\begin{gather*}
\norm{v_1-v_2}=\ang{v,v_1-v_2}=\norm{v_1}^2+\norm{v_2}^2,
\\
\|v_1+v_2\|^2=
2\left(\|v_1\|^2+\|v_2\|^2\right)-\|v_1-v_2\|^2= \left(\|v_1\|^2+\|v_2\|^2\right) \left(2-\norm{v_1-v_2}\right).
\end{gather*}
It follows now from \eqref{P18P-4} that
\begin{align}\label{P18P-6}
\norm{v-v_1}^2+\norm{v+v_2}^2 =2-\norm{v_1-v_2}=\frac{\|v_1+v_2\|^2}{\|v_1\|^2+\|v_2\|^2}.
\end{align}
In view of \eqref{P18P-3} and \eqref{P18P-5}, we also have
\begin{align*}
\norm{v_1}^2(1-\norm{v_1-v_2})=\ang{v_1,v_2}, \quad \norm{v_2}^2(1-\norm{v_1-v_2})=\ang{v_1,v_2}.
\end{align*}
The last two equalities imply that either $\norm{v_1}=\norm{v_2}$, or $\norm{v_1-v_2}=1$ and $\ang{v_1,v_2}=0$, i.e., the rays $R_1$ and $R_2$ are orthogonal.
In the last case, any pair $v_1\in R_1$, $v_2\in R_2$, with $\norm{v_1-v_2}=1$ minimizes expression \eqref{P18P-6} (the minimum equals 1), and we choose $v_1$ and $v_2$ such that $\norm{v_1}=\norm{v_2}=\frac{1}{\sqrt{2}}$.
Thus, in both cases $\norm{v_1}=\norm{v_2}$, and it follows from \eqref{P18P-6} that
\begin{align}\label{P18P-7}
\norm{v-v_1}^2+\norm{v+v_2}^2 =\frac{1}{2}\|v_1'+v_2'\|^2,
\end{align}
where $v_1':=\frac{v_1}{\norm{v_1}}$ and $v_2':=\frac{v_2}{\norm{v_2}}$.
Obviously, $(v_1',v_2')\in\overline{N}_{A,B}(\bar x)$, $\|v_1'\|=\|v_2'\|=1$, and it follows from \eqref{P18P-2}, \eqref{P18P-7} and \eqref{P17-1} that
\begin{align}\label{P18P-8}
(\itrd{3})^2&=\|v-v_1\|^2+\|v+v_2\|^2
\ge2(\itr)^2.
\end{align}
Conversely, let the minimum in \eqref{P17-1}
be attained at some $(v_1',v_2')\in\overline{N}_{A,B}(\bar x)$ with $\|v_1'\|=\|v_2'\|=1$.
Choose a unit vector $v$ such that $\ang{v,v_1'-v_2'}=\|v_1'-v_2'\|$, and let $v_1$ and $-v_2$ be the projections of $v$ on  the rays determined by $v_1'$ and $-v_2'$, respectively.
We are in a situation as above and, using \eqref{P18P-7} again, we obtain
\begin{align}\label{P18P-9}
4(\itr)^2&=\|v_1'+v_2'\|^2= 2(\|v-v_1\|^2+\|v+v_2\|^2)\ge2(\itrd{3})^2.
\end{align}
Combining \eqref{P18P-8} and \eqref{P18P-9} proves \eqref{P18-3}.

Now we consider the case when one of the components of $\overline{N}_{A,B}(\bar x)$ is trivial while the other one is not.
Let, e.g., $\overline{N}_{A,B}(\bar x)=C\times\{0\}$, where $C$ is a nontrivial ($C\ne\emptyset$ and $C\ne\{0\}$) cone in $X$.
Then by \eqref{P18P-2}, $\itrd{3}\ge1$, and, given any $\hat v\in C$ with $\|\hat v\|=1$, one can take $v=\hat v$ to get $\itrd{3}\le\|\hat v\|=1$.
Hence, $\itrd{3}=1$.
In this case $\itr=1$ by convention.

Finally we consider the remaining case when $\overline{N}_{A,B}(\bar x)=R\times R$ where $R$ is a ray in $X$ determined by a unit vector $\bar v$.
By definition \eqref{std3},
\begin{align*}
(\itrd{3})^2= \min_{\|v\|=1} \left(d^2(v,R)+d^2(-v,R)\right).
\end{align*}
For any $v$, one of the distances in the above expression is attained at the origin and equals 1.
Hence, $\itrd{3}\ge1$.
On the other hand, with $v=\hat v$ we have $\itrd{3}\le d(-\hat v,R)=1$, and consequently, $\itrd{3}=1$.
In this case, by \eqref{P17-1}, $\itr=\frac{1}{2}\norm{\hat v+\hat v}=1$.
\qed\end{proof}

\begin{remark}
The only property of $\overline{N}_{A,B}(\bar x)$ used in the proof of Proposition~\ref{P18} is
the one in Proposition~\ref{P7++}(i).
The proof is applicable in other situations, e.g., when establishing similar relationships between the dual space constants characterizing the transversality property introduced in Definition~\ref{D1}(ii) (cf. \cite{KruTha13,KruLukTha}).
One only needs to replace $\overline{N}_{A,B}(\bar x)$ in the above proof with $\overline{N}_{A}(\bar x)\times\overline{N}_{B}(\bar x)$ where $\overline{N}_{A}(\bar x)$ and $\overline{N}_{B}(\bar x)$ are conventional limiting normal cones (cf. definition \eqref{NC3}) at $\bx$ to the sets $A$ and $B$, respectively.
\xqed\end{remark}
Thanks to Propositions~\ref{P17} and \ref{P18}, the limiting criteria of intrinsic transversality in Theorem~\ref{T3} can be complemented in the Euclidean space setting by several more criteria collected in the next theorem.

\begin{theorem}\label{T5}
Suppose $X$ is a Euclidean space,
$A,B\subset X$ are closed, and $\bx\in A\cap B$.
The following conditions are equivalent:
\begin{enumerate}
\item
$\{A,B\}$ is intrinsically transversal at $\bar x$;
\item
there exists a number $\alpha\in]0,1[$ such that ${\|v_{1}+v_{2}\|>\alpha}$ for all $(v_1,v_2)\in\overline{N}_{A,B}(\bar x)$ with $\|v_1\|={\|v_2\|=1}$;
the exact upper bound of all such $\al$ equals $2\,\itr$;
\item
$\itrd{1}<2$,\\
i.e., there exists a number $\alpha<2$ such that ${\|v_{1}-v_{2}\|<\alpha}$ for all $(v_1,v_2)\in\overline{N}_{A,B}(\bar x)$ with $\|v_1\|=\|v_2\|=1$;
the exact lower bound of all such $\al$ equals $\itrd{1}$;
\item
$\itrd{2}<1$,\\
i.e., there exists a number $\alpha<1$ such that $\ang{v_{1},v_{2}}>-\alpha$ for all $(v_1,v_2)\in\overline{N}_{A,B}(\bar x)$ with $\|v_1\|=\|v_2\|=1$;
the exact lower bound of all such $\al$ equals $\itrd{2}$;
\item
$\itrd{3}>0$,\\
i.e., there exists a number $\alpha\in]0,1[$ such that $d\left((v,-v),\overline{N}_{A,B}(\bar x)\right)> \alpha$ for all $v\in X$ with $\|v\|=1$;
the exact upper bound of all such $\al$ equals $\itrd{3}$.
\end{enumerate}
\end{theorem}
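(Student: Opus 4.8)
The plan is to prove that each of the five conditions is equivalent to the single scalar inequality $\itr>0$, and then to read off the quantitative ``moreover'' assertions from the explicit formulas for $\itr$, $\itrd{1}$, $\itrd{2}$, $\itrd{3}$ together with the identities of Proposition~\ref{P18}. The equivalence of (i) with $\itr>0$ is immediate: this inequality \emph{is} the definition of intrinsic transversality in Definition~\ref{D3+}(ii), and it is also the content of Theorem~\ref{T3}. The structural fact I would invoke throughout is that, in finite dimensions, the cone $\overline{N}_{A,B}(\bar x)$ is closed (Proposition~\ref{P7++}(i)) and positively homogeneous in each argument, so its intersection with the ``bi-sphere'' $\{\|v_1\|=\|v_2\|=1\}$ is compact; hence each of the extrema defining $\itr$ (via \eqref{P17-1}), $\itrd{1}$, $\itrd{2}$ and $\itrd{3}$ in \eqref{std1}--\eqref{std3} is attained whenever the corresponding index set is nonempty, and equals the stipulated conventional value otherwise.

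For (ii): by Proposition~\ref{P17}, $2\,\itr=\min\{\|v_1+v_2\|\mid (v_1,v_2)\in\overline{N}_{A,B}(\bar x),\ \|v_1\|=\|v_2\|=1\}$; since this minimum is attained, the inequality requirement in (ii) can be met by some $\alpha$ if and only if that minimum is strictly positive, i.e.\ $\itr>0$, and the supremum of all admissible $\alpha$ equals this minimum, namely $2\,\itr$. For (iii)--(v) one argues analogously: from \eqref{std1}--\eqref{std3}, $\itrd{1}=\max\|v_1-v_2\|$, $\itrd{2}=-\min\ang{v_1,v_2}$ over the (compact) set of admissible unit pairs, and $\itrd{3}=\min_{\|v\|=1}d\bigl((v,-v),\overline{N}_{A,B}(\bar x)\bigr)$, each attained; consequently the requirement in (iii) is equivalent to $\itrd{1}<2$ with $\itrd{1}$ the infimum of admissible $\alpha$, that in (iv) to $\itrd{2}<1$ with $\itrd{2}$ the corresponding infimum, and that in (v) to $\itrd{3}>0$ with $\itrd{3}$ the supremum of admissible $\alpha$. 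It then remains to link these three inequalities back to $\itr>0$, which is exactly what Proposition~\ref{P18} provides: from \eqref{P18-1}, $\itrd{1}=2\sqrt{1-(\itr)^2}$, so $\itrd{1}<2\Leftrightarrow\itr>0$; from \eqref{P18-2}, $\itrd{2}=1-2(\itr)^2$, so $\itrd{2}<1\Leftrightarrow\itr>0$; and by the dichotomy in Proposition~\ref{P18}, either $\itrd{3}=\sqrt2\,\itr$ or $\itrd{3}=\itr=1$, so in both alternatives $\itrd{3}>0\Leftrightarrow\itr>0$. Chaining all of this through (i)$\,\Leftrightarrow\,\itr>0$ closes the cycle of equivalences and simultaneously yields the exact-bound statements.

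I do not expect a genuinely hard step here, since the substantive work is already contained in Propositions~\ref{P17} and \ref{P18}; what I would write out with care are (a) the bookkeeping of strict versus non-strict inequalities, which is clean precisely because all the extrema involved are attained in finite dimensions, and (b) the degenerate configurations of $\overline{N}_{A,B}(\bar x)$ (empty, equal to $\{(0,0)\}$, a product with a trivial factor, or a ``diagonal'' ray $R\times R$), where one must check that the conventions adopted for $\itr$ (minimum over the empty set equal to $2$ in \eqref{P17-1}), for $\itrd{1}$ and $\itrd{2}$ (maximum and minimum over the empty set equal to $0$ and $1$), for $\itrd{3}$ (distance to the empty set equal to $\sqrt2$), and the exceptional clause $\itrd{3}=\itr=1$ of Proposition~\ref{P18}, are all mutually consistent, so that every equivalence and every exact-bound claim survives in these corner cases.
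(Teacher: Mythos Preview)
Your proposal is correct and follows essentially the same approach as the paper: the paper does not give an explicit proof of Theorem~\ref{T5} at all, but simply states that it follows ``thanks to Propositions~\ref{P17} and \ref{P18}'' (together with Theorem~\ref{T3}), which is exactly the route you describe. Your write-up is in fact more detailed than the paper's, in that you spell out the compactness argument for attainment of the extrema and flag the degenerate cases of $\overline{N}_{A,B}(\bar x)$ that must be checked against the conventions.
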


\begin{remark}\label{R11}
1. Instead of the constant $\itrd{2}$, defined by \eqref{std2}, one can employ in Theorem~\ref{T5}(iv) its modification:
\begin{align*}
\itrdd{2}:=& -\min_{\substack{(v_1,v_2)\in\overline{N}_{A,B}(\bar x)\\\|v_1\|\le1,\,\|v_2\|\le1}}\ang{v_1,v_2}.
\end{align*}
It is easy to check that $\itrdd{2}=(\itrd{2})_+$.
Hence, the last constant is always nonnegative, and $\itrd{2}<1$ if and only if $\itrdd{2}<1$.

2. In view of Definition~\ref{D5}(i) and Proposition~\ref{P13}, the intrinsic transversality constants admit equivalent sequential representations in terms of Fr\'echet (or proximal) normals to $A$ and $B$ computed at points near $\bx$.
For instance,
\begin{gather}\label{std+}
\itr= \frac{1}{2} \liminf_{\substack{
a\to\bx,\, b\to\bx,\, x\to\bx \\
a\in A\setminus B,\, b\in B\setminus A,\, a\ne x,\, b\ne x,\\
v_{1}\in N_A(a),\, v_{2}\in N_B(b),\, \|v_{1}\|=\|v_{2}\|=1\\
\frac{\norm{x-a}}{\norm{x-b}}\to1\; \frac{\ang{v_{1},x-a}}{\norm{x-a}} \to1,\, \frac{\ang{v_{2},x-b}}{\norm{x-b}} \to1}} \norm{v_{1}+v_{2}},
\\\label{std2+}
\itrd{2}= -\liminf_{\substack{
a\to\bx,\, b\to\bx,\, x\to\bx \\
a\in A\setminus B,\, b\in B\setminus A,\, a\ne x,\, b\ne x\\
v_{1}\in N_A(a),\, v_{2}\in N_B(b),\, \|v_{1}\|=\|v_{2}\|=1\\
\frac{\norm{x-a}}{\norm{x-b}}\to1,\; \frac{\ang{v_{1},x-a}}{\norm{x-a}} \to1,\, \frac{\ang{v_{2},x-b}}{\norm{x-b}} \to1}} \ang{v_{1},v_{2}},
\end{gather}
with the convention that the infimum over the empty set in \eqref{std+} and \eqref{std2+} equals 2 and 1, respectively.
Each of the criteria of  intrinsic transversality in Theorem~\ref{T5} can be rewritten equivalently in terms of Fr\'echet (or proximal) normals to $A$ and $B$ computed at points near $\bx$.

3. Thanks to Lemma~\ref{L2}, one can write down several more equivalent representations.
For instance,
\begin{align}\notag
\itr &= \frac{1}{2} \liminf_{\substack{
a\to\bx,\, b\to\bx,\, x\to\bx \\
a\in A\setminus B,\, b\in B\setminus A,\, a\ne x,\, b\ne x\\
v_{1}\in N_A(a),\, v_{2}\in N_B(b),\, \|v_{1}\|=\|v_{2}\|=1\\ \frac{\norm{x-a}}{\norm{x-b}}\to1,\; \frac{x-a}{\norm{x-a}}-v_{1}\to0,\, \frac{x-b}{\norm{x-b}}-v_{2}\to0}} \norm{v_{1}+v_{2}}
\\\notag
&= \frac{1}{2} \liminf_{\substack{
a\to\bx,\, b\to\bx,\, x\to\bx \\
a\in A\setminus B,\, b\in B\setminus A,\, a\ne x,\, b\ne x\\
v_{1}\in N_A(a),\, v_{2}\in N_B(b),\, \|v_{1}\|=\|v_{2}\|=1\\ \frac{\norm{x-a}}{\norm{x-b}}\to1,\; \frac{x-a}{\norm{x-a}}-v_{1}\to0,\, \frac{x-b}{\norm{x-b}}-v_{2}\to0}} \norm{\frac{x-a}{\norm{x-a}}+\frac{x-b}{\norm{x-b}}}
\\\label{std+2}
&= \frac{1}{2} \liminf_{\substack{
a\to\bx,\, b\to\bx,\, x\to\bx \\
a\in A\setminus B,\, b\in B\setminus A,\, a\ne x,\, b\ne x,\;\frac{\norm{x-a}}{\norm{x-b}}\to1\\
d\left(\frac{x-a}{\norm{x-a}},N_A(a)\right)\to0,\, d\left(\frac{x-b}{\norm{x-b}},N_B(b)\right)\to0}} \norm{\frac{x-a}{\norm{x-a}}+\frac{x-b}{\norm{x-b}}},
\\\label{std2+2}
\itrd{2}&= -\liminf_{\substack{
a\to\bx,\, b\to\bx,\, x\to\bx \\
a\in A\setminus B,\, b\in B\setminus A,\, a\ne x,\, b\ne x,\;\frac{\norm{x-a}}{\norm{x-b}}\to1\\
d\left(\frac{x-a}{\norm{x-a}},N_A(a)\right)\to0,\, d\left(\frac{x-b}{\norm{x-b}},N_B(b)\right)\to0}} \frac{\ang{x-a,x-b}} {\norm{x-a}\norm{x-b}},
\end{align}
with the convention that the infimum over the empty set in \eqref{std+2} and \eqref{std2+2} equals 2 and 1, respectively.
\xqed\end{remark}

Another pair of constants originated in \cite{DruIofLew15} can be of interest:
\begin{align}\notag
\itrdh{1}
:=&\liminf_{\substack{
a\to\bx,\, b\to\bx
\\\label{std4}
a\in A\setminus B,\, b\in B\setminus A
}}
\max\biggl\{d\left(\frac{b-a}{\|a-b\|}, N_{A}(a)\right),
d\left(\frac{a-b}{\|a-b\|}, N_{B}(b)\right)\biggr\}
\\
=&\liminf_{\substack{
a\to\bx,\, b\to\bx\\
a\in A\setminus B,\, b\in B\setminus A\\
v_1\in N_{A}(a),\,v_2\in N_{B}(b)
}}
\max\biggl\{\norm{\frac{b-a}{\|a-b\|}-v_1},
\norm{\frac{a-b}{\|a-b\|}-v_2}\biggr\},
\end{align}
\begin{gather}\label{std5}
\itrdh{2}
:=\limsup_{\substack{
a\to\bx,\, b\to\bx
\\
a\in A\setminus B,\, b\in B\setminus A
\\
v_1\in N_{A}(a),\,v_2\in N_{B}(b),\, \norm{v_1}=\norm{v_2}=1}}
\left[\min\biggl\{\ang{\frac{b-a}{\|a-b\|},v_1},
\ang{\frac{a-b}{\|a-b\|},v_2}\biggr\}\right]_+,
\end{gather}
with the convention that the infimum and supremum over the empty set equal 1 and 0, respectively.
Thanks to this convention, it always holds $0\le\itrdh{1}\le1$ and $0\le\itrdh{2}\le1$.

\begin{remark}\label{R14}
1. Points $a\in A\setminus B$ and $b\in B\setminus A$ with either $N_{A}(a)=\{0\}$ or $N_{B}(b)=\{0\}$ can be excluded from definition \eqref{std4} because at such points either $d\left(\frac{b-a}{\|a-b\|},N_{A}(a)\right)=1$ or $d\left(\frac{a-b}{\|a-b\|},N_{B}(b)\right)=1$.

2. Fr\'echet normal cones in representations \eqref{std+}, \eqref{std2+}, \eqref{std+2}, \eqref{std2+2}, \eqref{std4} and \eqref{std5} can be replaced by proximal or limiting ones.
\xqed\end{remark}

\begin{proposition}\label{P19}
Suppose $X$ is a Euclidean space, $A,B\subset X$ are closed, and $\bx\in A\cap B$. Then
\begin{enumerate}
\item
$(\itrdh{1})^2+(\itrdh{2})^2=1$;
\item
if $\itr<\frac{1}{\sqrt{2}}$, then
$\itrdh{1}\le2\itr\sqrt{1-(\itr)^2}$;
\item
$\itrdh{1}=0$ if and only if $\itr=0$.
\end{enumerate}
\end{proposition}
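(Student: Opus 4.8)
The plan is to reduce all three parts to a single elementary fact of Euclidean geometry, which I would first isolate as a lemma: for a unit vector $u\in X$ and a nonempty closed convex cone $C\subset X$, the Moreau decomposition $u=P_C(u)+P_{C^{\circ}}(u)$ gives $d(u,C)=\|P_{C^{\circ}}(u)\|$, $\|u\|^2=\|P_C(u)\|^2+\|P_{C^{\circ}}(u)\|^2$ and $\|P_C(u)\|=\bigl[\sup\{\ang{u,v}\mid v\in C,\ \|v\|=1\}\bigr]_+$, so that
\begin{gather*}
d(u,C)^2=1-\bigl[\sup\{\ang{u,v}\mid v\in C,\ \|v\|=1\}\bigr]_+^2,
\end{gather*}
where the supremum over the empty set (the case $C=\{0\}$) is read as $-\infty$.

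For part (i) I would apply this identity twice, once with $u=\frac{b-a}{\|a-b\|}$, $C=N_A(a)$ and once with $u=\frac{a-b}{\|a-b\|}$, $C=N_B(b)$, and combine the two using $\max\{1-s^2,1-t^2\}=1-(\min\{s,t\})^2$ for $s,t\ge0$, the identity $\min\{[s]_+,[t]_+\}=[\min\{s,t\}]_+$, and the interchange $\min\{\sup_{v_1}f(v_1),\sup_{v_2}g(v_2)\}=\sup_{v_1,v_2}\min\{f(v_1),g(v_2)\}$, which holds because the two suprema run over independent variables. This shows that, for each admissible pair $(a,b)$, the square of the quantity under the $\liminf$ in \eqref{std4} equals $1$ minus the square of the positive part of the quantity under the $\limsup$ in \eqref{std5}. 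Passing to the limit then yields $(\itrdh{1})^2=1-(\itrdh{2})^2$: here one uses that $t\mapsto t^2$ and $t\mapsto[t]_+$ are continuous and nondecreasing (so they commute with $\liminf$ and $\limsup$ on $[0,\infty)$) together with $\limsup_{a,b}\sup_{v_1,v_2}(\cdots)=\limsup_{(a,b,v_1,v_2)}(\cdots)$. The degenerate situation in which no admissible $(a,b)$ accumulate at $\bx$ is treated separately via the conventions $\itrdh{1}=1$ and $\itrdh{2}=0$.

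For (ii) I would in fact prove the stronger inequality $\itrdh{1}\le\itr$; since $\itr<1/\sqrt2$ forces $1\le2\sqrt{1-(\itr)^2}$, this gives $\itrdh{1}\le\itr\le2\itr\sqrt{1-(\itr)^2}$, and it also settles the implication $\itr=0\Rightarrow\itrdh{1}=0$ of (iii). So assume $\itr<1/\sqrt2$. By Proposition~\ref{P17} and compactness of the intersection of $\overline{N}_{A,B}(\bx)$ (closed, by Proposition~\ref{P7++}(i)) with the product of unit spheres, there is a pair $(v_1,v_2)\in\overline{N}_{A,B}(\bx)$ with $\|v_1\|=\|v_2\|=1$ and $\|v_1+v_2\|=2\itr<2$, hence $v_1\ne v_2$. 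I would take realising sequences $(a_k),(b_k),(x_k),(v_{1k}),(v_{2k})$ as in Definition~\ref{D5'}(i), normalised so that $\|v_{1k}\|=\|v_{2k}\|=1$ (Remark~\ref{D3}.1); by Proposition~\ref{P15}(i) the ratio conditions give $\frac{x_k-a_k}{\|x_k-a_k\|}\to v_1$ and $\frac{x_k-b_k}{\|x_k-b_k\|}\to v_2$, so that $b_k-a_k=(x_k-a_k)-(x_k-b_k)$ together with $\frac{\|x_k-a_k\|}{\|x_k-b_k\|}\to1$ gives $\frac{b_k-a_k}{\|a_k-b_k\|}\to p:=\frac{v_1-v_2}{\|v_1-v_2\|}$. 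Since the ray $\R_+v_{1k}$ lies in $N_A(a_k)$ and $\ang{p,v_1}=\frac{1-\ang{v_1,v_2}}{\|v_1-v_2\|}=\frac{\|v_1-v_2\|}{2}>0$, for large $k$
\begin{gather*}
d\Bigl(\tfrac{b_k-a_k}{\|a_k-b_k\|},N_A(a_k)\Bigr)\le\sqrt{1-\ang{\tfrac{b_k-a_k}{\|a_k-b_k\|},v_{1k}}^2}\longrightarrow\sqrt{1-\tfrac{\|v_1-v_2\|^2}{4}}=\tfrac{\|v_1+v_2\|}{2}=\itr,
\end{gather*}
using $\|v_1-v_2\|^2+\|v_1+v_2\|^2=4$; the symmetric estimate gives $\limsup_k d\bigl(\frac{a_k-b_k}{\|a_k-b_k\|},N_B(b_k)\bigr)\le\itr$. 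As $(a_k,b_k)$ is admissible in the $\liminf$ defining $\itrdh{1}$, this yields $\itrdh{1}\le\itr$. For the remaining implication of (iii), I would assume $\itrdh{1}=0$ and pick $(a_k)\subset A\setminus B$, $(b_k)\subset B\setminus A$ with $a_k,b_k\to\bx$ and $d(p_k,N_A(a_k))\to0$, $d(-p_k,N_B(b_k))\to0$, where $p_k:=\frac{b_k-a_k}{\|a_k-b_k\|}$; then, after passing to a subsequence with $p_k\to p$ ($\|p\|=1$) and choosing nearest points $v_{1k}\in N_A(a_k)\to p$, $v_{2k}\in N_B(b_k)\to-p$, I would set $x_k:=\frac12(a_k+b_k)$, so that $\frac{x_k-a_k}{\|x_k-a_k\|}=p_k$, $\frac{x_k-b_k}{\|x_k-b_k\|}=-p_k$ and $\|x_k-a_k\|=\|x_k-b_k\|$. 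All requirements of Definition~\ref{D5'}(i) are then met (the ratio conditions because $\frac{\ang{v_{1k},p_k}}{\|v_{1k}\|}\to\ang{p,p}=1$ and likewise for $v_{2k}$), so $(p,-p)\in\overline{N}_{A,B}(\bx)$ with $\|p\|=1$, which by Proposition~\ref{P17} (equivalently Theorem~\ref{T3}(iii)) forces $\itr=0$.

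The substantive content is the Euclidean projection identity behind (i) and the computation $d(p,\R_+v_1)=\frac12\|v_1+v_2\|$ behind (ii)--(iii); everything else is bookkeeping. The point I expect to require the most care is, in (i), reconciling the various empty-set conventions --- in particular that points $(a,b)$ with $N_A(a)=\{0\}$ or $N_B(b)=\{0\}$ contribute the value $1$ to $\itrdh{1}$ but no admissible tuple to $\itrdh{2}$ (cf. Remark~\ref{R14}.1) --- and verifying that the interchanges of $\liminf$/$\limsup$ with $\sup$, with $[\cdot]_+$ and with $(\cdot)^2$ are all legitimate.
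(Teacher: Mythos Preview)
Your proof is correct, and for part~(ii) it is in fact sharper than the paper's. A brief comparison:

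\medskip
\noindent\textbf{Part (i).} Your Moreau-decomposition identity $d(u,C)^2=1-[\sup_{v\in C,\,\|v\|=1}\ang{u,v}]_+^2$ is exactly what the paper establishes, but the paper does it ray by ray: for a unit $v_1$ it checks $d^2(u,\R_+v_1)+(\ang{u,v_1})_+^2=1$ and then lets $v_1$ range over the unit sphere of $N_A(a)$. Your formulation via the projection onto the full cone is more concise but equivalent.

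\medskip
\noindent\textbf{Part (ii).} Here the two arguments diverge. You pass to the limit first: take a minimising pair $(v_1,v_2)\in\overline N_{A,B}(\bx)$ with $\|v_1\|=\|v_2\|=1$ and $\|v_1+v_2\|=2\itr$, realise it by sequences, observe $\frac{b_k-a_k}{\|a_k-b_k\|}\to p:=\frac{v_1-v_2}{\|v_1-v_2\|}$, and use $v_{1k}\in N_A(a_k)$ to bound $d\bigl(\frac{b_k-a_k}{\|a_k-b_k\|},N_A(a_k)\bigr)$ by the distance to the ray $\R_+v_{1k}$, which converges to $\sqrt{1-\ang{p,v_1}^2}=\itr$. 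This yields the \emph{stronger} inequality $\itrdh{1}\le\itr$, which indeed implies the paper's bound $\itrdh{1}\le2\itr\sqrt{1-(\itr)^2}$ when $\itr<1/\sqrt2$. The paper instead works with an approximate (pre-limit) configuration $a,b,x$ with $\|u_1+u_2\|<2\gamma'$ and relates the angles via the crude inequality $\al_1\al_2\le\min\{\al_1,\al_2\}$; this loses precisely the factor $2\sqrt{1-(\itr)^2}$. Your limiting argument exploits the symmetry $\ang{p,v_1}=\ang{-p,v_2}=\tfrac{1}{2}\|v_1-v_2\|$, which only holds exactly in the limit, and thereby avoids that loss. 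So your route is both shorter and gives a better constant; the paper's route has the minor advantage of being entirely ``finite'' (no limiting normals needed beyond the definition of $\itr$ itself), but at the cost of a weaker conclusion.

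\medskip
\noindent\textbf{Part (iii).} Your argument coincides with the paper's: take $x_k=\tfrac12(a_k+b_k)$ and verify Definition~\ref{D5'}(i) to get $(p,-p)\in\overline N_{A,B}(\bx)$.
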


\begin{proof}
(i) If there are no points $a\in A\setminus B$ and $b\in B\setminus A$ in a neighbourhood of $\bx$ with $N_{A}^p(a)\ne\{0\}$ and $N_{B}^p(b)\ne\{0\}$, then $\itrdh{1}=1$ and $\itrdh{2}=0$ in view of the conventions made and Remark~\ref{R14}; hence, equality (i) holds.

Let $a\in A\setminus B$ and $b\in B\setminus A$, $v_1\in N_{A}^p(a)$, $v_2\in N_{B}^p(b)$, and $\norm{v_1}=\norm{v_2}=1$.
Denote
\begin{gather}\label{P19P-3}
u:=\frac{b-a}{\norm{b-a}}
\end{gather}
and set $\al_1:=\ang{v_1,u}$ and $\al_2:=-\ang{v_2,u}$.
If $\al_1\ge0$, then $d(u,\R_+v_1)=\sqrt{1-\al_1^2}$; otherwise $d(u,\R_+v_1)=1$.
Hence, in both cases it holds $d^2(u,\R_+v_1)+(\al_1)_+^2=1$.
Similarly, $d^2(-u,\R_+v_2)+(\al_2)_+^2=1$.
Equality (i) follows from the definitions.

(ii) 
Let $\itr<\ga<\frac{1}{\sqrt{2}}$ and choose a $\ga'>0$ and an $\eps>0$ such that $\itr<\ga'<\ga$ and
\begin{gather}\label{P19P-8}
2\ga'\sqrt{1-(\ga')^2}+\eps<2\ga\sqrt{1-\ga^2},
\end{gather}
which is possible because the function $\ga'\mapsto\ga'\sqrt{1-(\ga')^2}$ is increasing on $[0,\frac{1}{\sqrt{2}}]$.
By the second representation in \eqref{std+2}, there exist points $a\in(A\setminus B)\cap\B_\eps(\bx)$, $b\in(B\setminus A)\cap\B_\eps(\bx)$, $x\in\B_\eps(\bx)$, $v_1\in N_{A}^p(a)$, $v_2\in N_{B}^p(b)$ such that $a\ne x$, $b\ne x$, $1-\eps<\frac{\norm{x-a}}{\norm{x-b}}<1+\eps$, $\norm{v_1}=\norm{v_2}=1$, $\norm{u_1-v_1}<\eps$, $\norm{u_2-v_2}<\eps$, and $\norm{u_1+u_2}<2\ga'$, where
\begin{gather}\label{P19P-5}
u_1:=\frac{x-a}{\norm{x-a}},\quad
u_2:=\frac{x-b}{\norm{x-b}}.
\end{gather}
Employing the notations \eqref{P19P-3} and \eqref{P19P-5}, set
\begin{gather}\label{P19P-1}
\al:=\ang{u_1,u_2},\quad \al_1:=\ang{u_1,u},\quad \al_2:=-\ang{u_2,u},
\\\label{P19P-2}
\be:=\norm{u_1+u_2},\quad
\be_1:=\sqrt{1-\al_1^2},\quad \be_2:=\sqrt{1-\al_2^2}.
\end{gather}
The relationship between the numbers $\al$ and $\be$ is straightforward:
\begin{gather}\label{P19P-4}
\be^2=2(1+\al).
\end{gather}
Let $\mathfrak{A}$, $\mathfrak{A}_1$ and $\mathfrak{A}_2$ stand for the angles between $u_1$ and $u_2$, $u$ and $u_1$, and $u_2$ and $-u$, respectively (measured counterclockwise).
Then $\mathfrak{A}+\mathfrak{A}_1+\mathfrak{A}_2=\pi$, $\sin\mathfrak{A}_1\ge0$, $\sin\mathfrak{A}_2\ge0$, and
\begin{gather*}
\cos\mathfrak{A}=-\cos(\mathfrak{A}_1+\mathfrak{A}_2) =-\cos\mathfrak{A}_1\cos\mathfrak{A}_2+
\sin\mathfrak{A}_1\sin\mathfrak{A}_2.
\end{gather*}
Hence,
\begin{gather}\label{P19P-7}
\al=\cos\mathfrak{A}\ge -\cos\mathfrak{A}_1\cos\mathfrak{A}_2 =-\al_1\al_2.
\end{gather}
By assumption, $\be<2\ga<\sqrt{2}$.
It follows from \eqref{P19P-4} that $\al<0$ (i.e. angle $\mathfrak{A}$ is obtuse), and consequently,
$\al_1>0$ and $\al_2>0$.
Since $\al_1\le1$ and $\al_2\le1$ (see \eqref{P19P-1}), we have $\al_1\al_2\le\hat\al:=\min\{\al_1,\al_2\}$, and in view of \eqref{P19P-7}, $-\hat\al\le\al$.
Using \eqref{P19P-4} again, we have
\begin{gather}\label{P19P-6}
0\le\mu:=1-\hat\al\le\frac{\be^2}{2} <2(\ga')^2<1.
\end{gather}
In view of \eqref{P19P-2} and \eqref{P19P-6} and taking into account that the function $\mu\mapsto\sqrt{\mu(2-\mu)}$ is increasing on $[0,1]$, it holds
\begin{gather}\label{P19P-9}
\hat\be:=\max\{\be_1,\be_2\} =\sqrt{1-\hat\al^2} =\sqrt{\mu(2-\mu)}\le2\ga'\sqrt{1-(\ga')^2}.
\end{gather}
Set $\hat v_1:=\al_1v_1$, $\hat v_2:=\al_2v_2$, $\hat u_1:=\al_1u_1$ and $\hat u_2:=\al_2u_2$, and notice that $\hat v_1\in N_{A}^p(a)$, $\hat v_2\in N_{B}^p(b)$, $\norm{\hat u_1-u}=\sqrt{1-\al_1^2}$ and $\norm{\hat u_2+u}=\sqrt{1-\al_2^2}$.
Hence, in view of \eqref{P19P-9} and \eqref{P19P-8},
\begin{gather*}
\max\{\norm{\hat v_1-u},\norm{\hat v_2+u}\}< \hat\be+\eps \le2\ga'\sqrt{1-(\ga')^2}+\eps <2\ga\sqrt{1-\ga^2}.
\end{gather*}
It follows from the definitions~\eqref{std4} and \eqref{P19P-3} that $\itrdh{1}\le2\ga\sqrt{1-\ga^2}$.
Letting $\ga\downarrow\itr$, we arrive at the claimed inequality.

(iii) If $\itr=0$, then $\itrdh{1}=0$ in view of part (ii).
Let $\itrdh{1}=0$.
By the definition \eqref{std4}, for any $\eps>0$,
there exist points $a\in(A\setminus B)\cap\B_\eps(\bx)$, $b\in(B\setminus A)\cap\B_\eps(\bx)$, $v_1\in N_{A}^p(a)$ and $v_2\in N_{B}^p(b)$ such that $\norm{u-v_1}<\eps$ and $\norm{u+v_2}<\eps$ where $u$ is given by \eqref{P19P-3}.
Without loss of generality, we can assume that $\norm{v_1}=\norm{v_2}=1$.
Set $x:=(a+b)/2$.
Then $x\ne a$, $x\ne b$, $\norm{x-a}=\norm{x-b}$, and employing the notations \eqref{P19P-5}, $u_1=u$ and $u_2=-u$.
Hence, $\norm{u_1-v_1}<\eps$ and $\norm{u_2-v_2}<\eps$, and it follows from the first representation in \eqref{std+2} that $\itr=0$.
\qed\end{proof}

\begin{remark}
For the expression in the \RHS\ of the inequality in Proposition~\ref{P19}(ii), we have the following estimates:
\begin{gather*}
0\le2\itr\sqrt{1-(\itr)^2}\le1
\end{gather*}
as long as $0\le\itr\le1$.
It equals 0 if and only if either $\itr=0$ or $\itr=1$.
It equals 1 if and only if $\itr=\frac{1}{\sqrt{2}}$.
\xqed\end{remark}

Thanks to Proposition~\ref{P19}, the criteria of intrinsic transversality in Theorems~\ref{T3} and \ref{T5} can be complemented by several more characterisations collected in the next theorem.

\begin{theorem}\label{T6}
Suppose $X$ is a Euclidean space,
$A,B\subset X$ are closed, and $\bx\in A\cap B$.
The following conditions are equivalent:
\begin{enumerate}
\item
$\{A,B\}$ is intrinsically transversal at $\bar x$;
\item
$\itrdh{1}>0$,\\
i.e., there exist numbers $\alpha\in]0,1[$ and $\de>0$ such that
\begin{gather*}
\max\biggl\{d\left(\frac{b-a}{\|a-b\|}, N_{A}(a)\right),
d\left(\frac{a-b}{\|a-b\|}, N_{B}(b)\right)\biggr\}>\alpha
\end{gather*}
for all $a\in(A\setminus B)\cap\B_\de(\bx)$, $b\in(B\setminus A)\cap\B_\de(\bx)$, or equivalently,\\
there exist numbers $\alpha\in]0,1[$ and $\de>0$ such that
\begin{gather*}
\max\biggl\{\norm{\frac{b-a}{\|a-b\|}-v_1},
\norm{\frac{a-b}{\|a-b\|}-v_2}\biggr\}>\alpha
\end{gather*}
for all $a\in(A\setminus B)\cap\B_\de(\bx)$, $b\in(B\setminus A)\cap\B_\de(\bx)$, and all $v_{1}\in N_{A}(a)$ and $v_{2}\in N_{B}(b)$;

the exact upper bound of all such $\al$ equals $\itrdh{1}$.
\item
$\itrdh{2}<1$,\\
i.e., there exist numbers $\alpha\in]0,1[$ and $\de>0$ such that
\begin{gather*}
\min\biggl\{\ang{\frac{b-a}{\|a-b\|},v_1},
\ang{\frac{a-b}{\|a-b\|},v_2}\biggr\}<\alpha
\end{gather*}
for all $a\in(A\setminus B)\cap\B_\de(\bx)$, $b\in(B\setminus A)\cap\B_\de(\bx)$, and all $v_{1}\in N_{A}(a)$ and $v_{2}\in N_{B}(b)$ with $\norm{v_1}=\norm{v_2}=1$;

the exact lower bound of all such $\al$ equals $\itrdh{2}$.
\end{enumerate}
\end{theorem}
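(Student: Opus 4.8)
The plan is to derive the whole statement from Proposition~\ref{P19} together with the fact that, by Definition~\ref{D3+}(ii), $\{A,B\}$ is intrinsically transversal at $\bx$ precisely when $\itr>0$. It therefore suffices to establish the chain $\itr>0\ \Leftrightarrow\ \itrdh{1}>0\ \Leftrightarrow\ \itrdh{2}<1$ and to identify the two exact bounds asserted in {\rm (ii)} and {\rm (iii)}.

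First I would treat {\rm (i)}$\Leftrightarrow${\rm (ii)}. The equivalence $\itrdh{1}>0\Leftrightarrow\itr>0$ is an immediate consequence of Proposition~\ref{P19}(iii). It then remains to check that the inequality $\itrdh{1}>0$ is exactly what the two displayed $\eps$--$\de$ conditions in {\rm (ii)} express, and that the supremum of the admissible $\al$ is $\itrdh{1}$. The two forms of {\rm (ii)} are literal rewritings of the two (equal, by \eqref{std4}) expressions for $\itrdh{1}$; I would justify their coincidence by the elementary observation that, for a closed cone $C\subset X$ and a point $u\in X$, the infimum defining $d(u,C)$ is attained when $\dim X<\infty$, so that the condition $\norm{u-v}>\al$ for every $v\in C$ is equivalent to $d(u,C)>\al$ --- applied to $C=N_A(a)$ and $C=N_B(b)$ with $v_1$ and $v_2$ chosen independently. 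For the bound, I would note that a given $\al\in]0,1[$ satisfies the condition in {\rm (ii)} --- i.e.\ for some $\de>0$ one has $\max\{d(\frac{b-a}{\norm{a-b}},N_A(a)),d(\frac{a-b}{\norm{a-b}},N_B(b))\}>\al$ for all $a\in(A\setminus B)\cap\B_\de(\bx)$ and $b\in(B\setminus A)\cap\B_\de(\bx)$ --- exactly when $\al$ lies strictly below the $\liminf$ in \eqref{std4}; hence the exact upper bound of all such $\al$ equals $\itrdh{1}$, the conventions adopted after \eqref{std5} ensuring $\itrdh{1}\le1$ and handling the degenerate case, while Remark~\ref{R14}.1 disposes of the points at which one of the normal cones is trivial.

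Next I would treat {\rm (ii)}$\Leftrightarrow${\rm (iii)}. Proposition~\ref{P19}(i) gives $(\itrdh{1})^2+(\itrdh{2})^2=1$, and since $0\le\itrdh{1}\le1$ and $0\le\itrdh{2}\le1$ by construction, this immediately yields $\itrdh{1}>0\Leftrightarrow\itrdh{2}<1$. For the exact bound in {\rm (iii)} I would run the same argument with the $\limsup$ in \eqref{std5} in place of the $\liminf$: for $\al>0$ one has $\min\{\cdots\}<\al$ if and only if $[\min\{\cdots\}]_+<\al$ (when $\min\{\cdots\}<0$ the bracket equals $0<\al$, and otherwise the two agree), so the condition displayed in {\rm (iii)} holds for a given $\al$ exactly when $\al$ lies strictly above $\limsup[\min\{\cdots\}]_+=\itrdh{2}$; hence the exact lower bound of the admissible $\al$ equals $\itrdh{2}$.

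No new estimate is needed --- all the Euclidean geometry already sits in Proposition~\ref{P19}, so I do not expect a serious obstacle. The one place that calls for care is the passage between the $\liminf$/$\limsup$ definitions \eqref{std4}, \eqref{std5} and the quantified $\eps$--$\de$ formulations: one must keep track of the conventions for extrema over the empty set and of the boundary values $\itrdh{1}=1$ and $\itrdh{2}=0$, which arise when there are no pairs of nontrivial normals near $\bx$ --- but in that situation $\itr=1$ as well, so the stated equivalences remain correct.
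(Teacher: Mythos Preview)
Your proposal is correct and matches the paper's approach: the paper gives no explicit proof of Theorem~\ref{T6} at all, merely stating that ``thanks to Proposition~\ref{P19}, the criteria of intrinsic transversality in Theorems~\ref{T3} and \ref{T5} can be complemented'' by the conditions in Theorem~\ref{T6}. Your derivation --- using Proposition~\ref{P19}(iii) for {\rm(i)}$\Leftrightarrow${\rm(ii)} and Proposition~\ref{P19}(i) for {\rm(ii)}$\Leftrightarrow${\rm(iii)}, together with the routine unpacking of the $\liminf$/$\limsup$ definitions \eqref{std4}, \eqref{std5} into the $\eps$--$\de$ formulations and the identification of the exact bounds --- is exactly what the paper intends, only spelled out in more detail than the paper itself provides.
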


\begin{remark}
1. Conditions $N_{A}(a)\ne\{0\}$ and $N_{B}(b)\ne\{0\}$ can be added in Theorem~\ref{T6}(i) (cf. Remark~\ref{R14}).

2. Thanks to Theorem~\ref{T6} and taking into account Remark~\ref{R14}.2, Definition~\ref{D3+}(ii) of intrinsic transversality formulated in a general normed linear space, in the Euclidean space setting reduces to the original definition of this property introduced recently by Drusvyatskiy et al. \cite{DruIofLew15} (see \cite[Definition~3.1 and formula~(3.1)]{DruIofLew15} and \cite[formula~(5)]{NolRon16}).

3. The six constants providing quantitative characterisations of the intrinsic transversality property of $\{A,B\}$ at $\bx$ make two distinct groups: 1) $\itr$, $\itrd{1}$, $\itrd{2}$, $\itrd{3}$ and 2)~$\itrdh{1}$, $\itrdh{2}$.
Within each group, the constants can be easily converted from one into another thanks to Proposition~\ref{P18} and Proposition~\ref{P19}(i).
The constants belonging to different groups are not convertible.
We only have a one-sided estimate in Proposition~\ref{P19}(ii) complemented by the fact in Proposition~\ref{P19}(iii) that constants $\itr$ and $\itrdh{1}$ can equal zero only simultaneously.
Fortunately the last fact is sufficient for detecting intrinsic transversality qualitatively.

4. Compared to $\itrdh{1}$, the definition \eqref{itr} of $\itr$ contains an additional parameter: $x$ which in a sense determines the ``directions'' of the normal vectors $x_1^*$ and $x_2^*$.
This explains why the constants belonging to different groups are not convertible (see the previous remark) and seems to be an advantage of the definition \eqref{itr} when characterizing the intrinsic transversality property quantitatively as it eliminates normal vectors which are irrelevant from the point of view of intrinsic transversality.
\xqed\end{remark}

The next theorem provides a list of equivalent criteria of subtransversality of a pair of convex sets which follow from Theorems~\ref{T5} and \ref{T6} in view of Corollary~\ref{C4+}.

\begin{theorem}
Suppose $A,B\subset X$ are closed and convex, and $\bx\in A\cap B$. Then
$\{A,B\}$ is subtransversal at $\bar x$ if and only if one of the conditions {\rm (ii)--(v)} in Theorem~\ref{T5} or
{\rm (ii)--(iii)} in Theorem~\ref{T6} is satisfied.
\end{theorem}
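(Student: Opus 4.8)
The plan is to deduce the statement by transitivity from three facts already established in the excerpt, so that no genuinely new argument is required.

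First I would record that Theorem~\ref{T5} asserts the mutual equivalence of its conditions (i)--(v) for closed sets $A,B$ in a Euclidean space, and that its condition~(i) is precisely ``$\{A,B\}$ is intrinsically transversal at $\bx$''. Analogously, Theorem~\ref{T6} asserts the mutual equivalence of its conditions (i)--(iii), again with (i) being ``$\{A,B\}$ is intrinsically transversal at $\bx$''. Putting these together, each single one of the conditions (ii)--(v) of Theorem~\ref{T5} and (ii)--(iii) of Theorem~\ref{T6} is equivalent to intrinsic transversality of $\{A,B\}$ at $\bx$; consequently the disjunction ``one of these conditions is satisfied'' is itself equivalent to intrinsic transversality of $\{A,B\}$ at $\bx$. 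Note that this step uses only that $A$ and $B$ are closed.

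Second I would bring in the convexity hypothesis through Corollary~\ref{C4+}, which states that for closed \emph{convex} sets $A,B$ in a Euclidean space with $\bx\in A\cap B$ the properties of intrinsic transversality, weak intrinsic transversality and subtransversality at $\bx$ all coincide. Chaining the equivalence ``subtransversality at $\bx$ $\Leftrightarrow$ intrinsic transversality at $\bx$'' from Corollary~\ref{C4+} with the equivalences from the previous step then yields precisely the claimed equivalence between subtransversality at $\bx$ and the satisfaction of any one of the listed conditions.

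I do not anticipate any real obstacle: the only thing to watch is that every cited result is applied in its proper setting --- a finite-dimensional Euclidean space with closed sets, which has been standing since the start of this section --- and that the convexity assumption enters exactly once, namely when Corollary~\ref{C4+} is invoked. With these checks in place the proof is complete.
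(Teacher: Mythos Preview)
Your proposal is correct and matches the paper's own approach: the paper states that the theorem ``follow[s] from Theorems~\ref{T5} and \ref{T6} in view of Corollary~\ref{C4+}'', which is exactly the chain of equivalences you describe. There is nothing to add.
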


\section{Conclusions and future work}\label{S5}

A connection has been established between the two seemingly different normal cone transversality properties of pairs of nonconvex sets: the one introduced in Kruger et al \cite{KruLukTha} as a sufficient condition of subtransversality in Asplund spaces and the finite dimensional Euclidean space \emph{intrinsic transversality} property introduced in Drusvyatskiy et al. \cite{DruIofLew15} as a sufficient condition for local linear convergence of alternating projections for solving feasibility problems.
It is shown that in Euclidean spaces the properties are equivalent.
Several characterizations of this property are established.
Two new limiting objects are used in the finite dimensional characterizations: the cone of pairs of relative limiting normals and the cone of pairs of restricted relative limiting normals.
They possess certain similarity with the conventional limiting normal cones, but unlike the latter one are defined for pairs of sets.
Special attention is given to the convex case.

The following questions need to be answered and have been identified for future research.
The readers are welcome to contribute.
\renewcommand{\labelenumi}{\rm \arabic{enumi})}
\begin{enumerate}
\item
Does the dual characterization in Theorem~\ref{T1'} reduce to that in Theorem~\ref{T2} when the sets are convex?
\item
Can Proposition~\ref{P10}(v) be extended to general Banach spaces?
\item
The relationship between intrinsic transversality and weak intrinsic transversality should be further investigated.
Are they different in general? in finite dimensions? in Euclidean spaces?
\item
When do the sets defined in the two parts of Definition~\ref{D5} coincide?
\item
An analogue of Theorem~\ref{T3} for weak intrinsic transversality should be formulated.
\item
It is not important for estimating intrinsic transversality, but it would be good to add the case $\itr\ge\frac{1}{\sqrt{2}}$ to Proposition~\ref{P19} for completeness.
\end{enumerate}

\begin{acknowledgements}
The author thanks Nguyen Hieu Thao for many constructive comments and suggestions regarding several definitions and statements in the article, and the referees for the careful reading of the manuscript and constructive comments and suggestions.
\end{acknowledgements}
\def\cprime{$'$} \def\cftil#1{\ifmmode\setbox7\hbox{$\accent"5E#1$}\else
  \setbox7\hbox{\accent"5E#1}\penalty 10000\relax\fi\raise 1\ht7
  \hbox{\lower1.15ex\hbox to 1\wd7{\hss\accent"7E\hss}}\penalty 10000
  \hskip-1\wd7\penalty 10000\box7} \def\cprime{$'$} \def\cprime{$'$}
  \def\cprime{$'$} \def\cprime{$'$} \def\cprime{$'$}
  \def\Dbar{\leavevmode\lower.6ex\hbox to 0pt{\hskip-.23ex \accent"16\hss}D}
  \def\cfac#1{\ifmmode\setbox7\hbox{$\accent"5E#1$}\else
  \setbox7\hbox{\accent"5E#1}\penalty 10000\relax\fi\raise 1\ht7
  \hbox{\lower1.15ex\hbox to 1\wd7{\hss\accent"13\hss}}\penalty 10000
  \hskip-1\wd7\penalty 10000\box7} \def\cprime{$'$}


\end{document}